\newcommand{\Keywords}[1]{\par\noindent{\small{\bf Keywords\/}: #1}}
\newcommand{\Class}[1]{\par\noindent{\small{\bf Mathematics Subjects Classification (2010)\/}: #1}}
\def\url@leostyle{%
 \@ifundefined{selectfont}{\def\UrlFont{\sf}}{\def\UrlFont{\scriptsize\ttfamily}}} \makeatother\urlstyle{leo}
\newtheorem{theorem}{Theorem}
\newtheorem{proposition}[theorem]{Proposition}
\newtheorem{lemma}[theorem]{Lemma}
\theoremstyle{definition}
\newtheorem{definition}[theorem]{Definition}
\newtheorem{example}[theorem]{Example}
\theoremstyle{remark}
\newtheorem{remark}[theorem]{Remark}
\numberwithin{equation}{section}
\numberwithin{theorem}{section}
\definecolor{Red}{rgb}{0.9,0,0.0}
\definecolor{Blue}{rgb}{0,0.0,1.0}
\def\si#1{}%{\index{sub}{#1}}
\def\ni#1{}%{\index{notation}{#1}}
\DeclareMathAlphabet\mathbfcal{OMS}{cmsy}{b}{n}
\def\mypart#1{}%{ \part{#1} }
\newcommand{\R}{\mathbb R}
\DeclareFontFamily{U}{mathx}{\hyphenchar\font45}
\DeclareFontShape{U}{mathx}{m}{n}{
	<5> <6> <7> <8> <9> <10>
	<10.95> <12> <14.4> <17.28> <20.74> <24.88>
	mathx10
}{}
\DeclareSymbolFont{mathx}{U}{mathx}{m}{n}
\DeclareMathSymbol{\bigtimes}{1}{mathx}{"91}
\DeclareMathAccent{\widebar}{0}{mathx}{"73}
\def\cA{\mathcal{A}}
\def\cB{\mathcal{B}}
\def\cE{\mathcal{E}}
\def\cF{\mathcal{F}}
\def\cG{\mathcal{G}}
\def\cH{\mathcal{H}}
\def\cI{\mathcal{I}}
\def\cJ{\mathcal{J}}
\def\cN{\mathcal{N}}
\def\cP{\mathcal{P}}
\def\cU{\mathcal{U}}
\def\cX{\mathcal{X}}
\def\cY{\mathcal{Y}}
\def\bE{\mathbb{E}}
\def\bF{\mathbb{F}}
\def\bG{\mathbb{G}}
\def\bH{\mathbb{H}}
\def\bN{\mathbb{N}}
\def\bP{\mathbb{P}}
\def\bR{\mathbb{R}}
\newcommand{\1}{\mathbbm{1}}            % preferable way of writing indicator function
\newcommand{\set}[1]{{\{#1\}}}            % set: {xyz} to be used for inline formulas
\newcommand{\wh}[1]{\widehat{#1}}
\newcommand{\wt}[1]{\widetilde{#1}}
 \def\hat{\widehat}
\def\tilde{\widetilde}
 \def\F{{\cal F}}
\newtheorem{thm}{Theorem}[section]
\newtheorem{lem}{Lemma}[section]
\newtheorem{pro}{Proposition}[section]
\newtheorem{cor}{Corollary}[section]
\newtheorem{rem}{Remark}[section]
\newtheorem{ex}{Example}[section]
\newtheorem{defi}{Definition}[section]%{\bf }{\rm }
  \newcommand{\be}{\begin{equation}}
\newcommand{\ee}{\end{equation}}
\newcommand{\bde}{\begin{displaymath}}
\newcommand{\ede}{\end{displaymath}}
\newcommand{\beq}{\begin{eqnarray*}}
\newcommand{\eeq}{\end{eqnarray*}}
\newcommand{\beqa}{\begin{eqnarray}}
\newcommand{\eeqa}{\end{eqnarray}}
\newcommand{\bel }{\left{\begin{array}{ll}}}
\newcommand{\eel}{\cr \end{array} \right.}
\newcommand{\bd}{\begin{defi}}
\newcommand{\ed}{\end{defi}}
\newcommand{\brem }{\begin{rem} \rm }
\newcommand{\erem }{\end{rem}}
\newcommand{\bex}{\begin{ex} \rm }
\newcommand{\eex}{\end{ex}}
\newcommand{\begth}{\begin{thm}}
\newcommand{\eeth}{\end{thm}}
\newcommand{\bl}{\begin{lem}}
\newcommand{\el}{\end{lem}}
\newcommand{\bp}{\begin{pro}}
\newcommand{\ep}{\end{pro}}
\newcommand{\bcor}{\begin{cor}}
\newcommand{\ecor}{\end{cor}}
\newcommand{\lab }{\label }
\title{Generalized Multivariate Hawkes Processes}
\author{Tomasz R. Bielecki
\\ Department of Applied Mathematics \\
 Illinois Institute of Technology \\
 Chicago, IL 60616, USA \\ \\
Jacek Jakubowski
\\  Institute     of Mathematics  \\ University of Warsaw
\\ Warszawa, Poland \\ \\
 Mariusz Niew\k{e}g\l owski \\
Faculty of Mathematics and Information Science
\\ Warsaw University of Technology
\\ Warszawa, Poland}
\date{\vskip 30 pt \today \vskip 25 pt}
\begin{document}
\maketitle
\thispagestyle{empty}	

\begin{abstract}
This work contributes to the theory and applications of Hawkes processes. We introduce and examine a new class of Hawkes processes that we call generalized Hawkes processes, and their special subclass -- the generalized multivariate Hawkes processes (GMHPs). GMHPs are multivariate marked point processes that add an important feature to the family of the (classical) multivariate Hawkes processes: they allow for explicit modelling of simultaneous occurrence of  excitation events coming from different sources, i.e. caused by different coordinates of the multivariate process. We study the issue of existence of a generalized Hawkes process, and we provide a construction of a specific generalized multivariate Hawkes process. We investigate Markovian aspects of GMHPs, and we indicate some plausible important applications of GMHPs.
\vskip 20 pt
\Keywords{Generalized Hawkes processes, generalized multivariate Hawkes process, Hawkes kernel, multivariate marked point process, random measure, predictable compensator, seismology, epidemiology, finance.}
\vskip 20 pt
\Class{60G55, $\,$60H99}
\end{abstract}

%%%%%%%%%%%%%%%%%%%%%%%%%%%%%%%%%%%%%%%%%%%%%%%%%%%%%%%%%%%%%%%%%%%%%%%%%%%

\tableofcontents
%\newpage

\section{Introduction}

A very interesting and important class of stochastic processes was introduced by Alan Hawkes in \cite{Hawkes1971a,Hawkes1971}. These processes, called now  Hawkes processes, are meant to model self-exciting and mutually-exciting random phenomena that evolve in time. The self-exciting phenomena are modeled as univariate Hawkes processes, and the mutually-exciting phenomena are modeled as multivariate Hawkes processes. Hawkes processes belong to the family  of  marked point processes, and, of course, a univariate Hawkes process is just a special case of the multivariate one.

In this paper, which originates from Chapter 11 of \cite{BJN-buczek}, we define and study generalized multivariate Hawkes processes (GMHPs).  These processes constitute  a subclass of the family generalized Hawkes processes defined in this paper as well.  In addition, we provide a novel construction of a generalized multivariate Hawkes process.

GMHPs are multivariate marked point processes that add an important feature to the family of the (classical) multivariate Hawkes processes: they allow for explicit modelling of simultaneous occurrence of  excitation events coming from different sources, i.e. caused by different coordinates of the multivariate process. The importance of this feature is rather intuitive, and it will be illustrated  in Section \ref{sec:app}. In this regard, GMHPs differ from the  multivariate Hawkes processes that were studied in Bremaud and Massouli \cite{BreMas1996}  and Liniger \cite{lin2009}.

We need to stress that we limit ourselves here to the case of linear GMHPs, that are counterpart of the linear classical Hawkes processes. That is to say, we do not study here what would be {a} counterpart of the nonlinear classical Hawkes processes. We refer to e.g. Chapter 1 in \cite{Zhu2013} for comparison of linear and nonlinear Hawkes processes. We also note that the generalized Hawkes processes introduced here should not be confused with those studied in \cite{Anca2011}. In particular, we do not introduce any additional random factors, such as Brownian motions, into the compensators of the multivariate marked point process $N$ showing in the Definition \ref{def:multiHawkesi} below.

We also need to stress that we are not concerned in this study with stationarity and spectral properties of the GMHPs. This is the reason why in the definition of the Hawkes kernel $\kappa$, of the generalized Hawkes process, we use integration over the interval $(0,t)$ rather than integration over $(-\infty,t)$. Please see also Remark \ref{rem:(0,t)} in this regard.

The paper is organized as follows. In Section \ref{sec:GenGHawkes} we define, prove existence of and provide some discussion of a generalized Hawkes process. Section \ref{GMHP} is devoted to study of the main object of this paper, namely the generalized multivariate Hawkes process. In Section \ref{constr} we provide a mathematical construction of and computational pseudo-algorithm for simulation of a generalized multivariate Hawkes process with deterministic kernels $\eta$ and $f$ (cf. \eqref{eq:kappa-G}). Markovian aspects of a generalized multivariate Hawkes process are discussed in Section \ref{Ex:MarkovHawkes}. Section \ref{sec:app} contains
 a brief description of possible applications of generalized multivariate Hawkes processes {in seismology, epidemiology and finance.} Finally, in the Appendix, we provide some needed technical results.

\medskip
\noindent
In this paper we use various  concepts and results from stochastic analysis. For a comprehensive study of these concepts and results we refer to e.g. \cite{HeWanYan1992}, \cite{LasBra1995} and \cite{js1987}.

\section{Generalized Hawkes process}\label{sec:GenGHawkes}

Let $(\Omega,\cF,\bP)$  be  a probability space and $(\mathcal{X},\mathbfcal{X} )$ be a Borel space.
We take $\partial$ to be a point external to $\mathcal{X}$, and we let $\mathcal{X}^\partial := \mathcal{X} \cup \partial$. On $(\Omega,\cF,\bP)$ we consider a \textit{marked point process} $N$ with  mark space $\mathcal{X}$, that is, a sequence of random elements
\begin{equation}\label{eq:Ngen-G}
N=((T_n, X_n))_{n \geq 1},
\end{equation}
where for each $n$:
\begin{enumerate}%[ {(}i{)} ]
	\item $T_n$ is a random variable with values in $(0,\infty]$,
	\item $X_n$ is a random variable  with values in $\mathcal{X}^\partial$,
	\item $
	T_n \leq T_{n+1}$, and  if $T_n < + \infty$ then  $
	T_n < T_{n+1}$,
	\item  $ X_n = \partial $ iff $T_n = \infty$.
\end{enumerate}
The explosion time of $N$, say $T_\infty$, is defined as
\[
T_\infty := \lim_{ n \rightarrow \infty } T_n.
\]
Following the typical techniques used in the theory of Marked Point Processes (MPPs), in particular following Section 1.3 in  \cite{LasBra1995}, we associate with the process $N$ an integer-valued random measure on $(\mathbb{R}_+ \times \mathcal{X}, \mathcal{B}(\mathbb{R}_+) \otimes \mathbfcal{X} )$, also denoted by $N$ and defined as
\begin{equation}\label{eq:NH-G}
N(dt,dx) := \sum_{n \geq 1 } \delta_{(T_n, X_n)} (dt, dx) \1_\set{T_n < \infty},
\end{equation}
so that
\[
N((0,t],A)=\sum_{n\geq 1} \1_{\{ T_n \leq t\}}\1_{\{X_n \in A \}},
\]
where $A\in\mathbfcal{X}$.

Let $\bF^N $ be the  natural filtration of $N$,  so ${\mathbb F}^N:=(\F^N_t,\ t\geq 0)$, where $\F^N_t$
is the $\bP$--completed $\sigma$--field $\sigma(N((s,r]\times A)\, :\, 0\leq s<r\leq t,\ A\in \cX),\ t\geq 0$. In view of Theorem 2.2.4 in \cite{LasBra1995} the filtration ${\mathbb F}^{N}$ satisfies the usual conditions.
Moreover,  $N$ is $\bF^N$--optional, so, using Proposition 4.1.1 in \cite{LasBra1995} we conclude that $T_n$'s are $\bF^N$--stopping times and $X_n$ are $\cF_{T_n}$-measurable.
In what follows we denote by $\cP$  the $\bF^N$-predictable $\sigma$-field.

We recall that for a given filtration $\bF$ a stochastic process $X: \Omega \times [0,\infty ) \rightarrow \bR$ is said to be  $\mathbb{F}$-predictable if it is measurable with respect to the predictable sigma field $\cP^\bF $ on $\Omega \times [0,\infty )$, which is generated by $\bF$-adapted processes whose paths are continuous (equivalently left-continuous, with the left limit at $t=0$ defined as the value of the path at $t=0$)  functions of time variable. More generally, a function $X: \Omega \times [0,\infty ) \times \mathcal{X} \rightarrow \bR$ is said to be $\mathbb{F}$-predictable function if it is measurable with respect to the sigma field $\cP^\bF(\mathbfcal{X}):= \cP^\bF \otimes \mathbfcal{X}$ on $\Omega \times [0,\infty) \times \mathcal{X}$.
The sigma field $\cP^\bF(\mathbfcal{X})$  is generated by the sets $A \times \set{0} \times \mathbfcal{X} $  where $A \in \cF_0$ and the sets of the form $B \times (s,t] \times D$ where $0 < s \leq t$, $B \in \cF_s$  and $D \in \mathbfcal{X}$.

We now consider a random measure $\nu$ on $(\mathbb{R}_+ \times \mathcal{X}, \mathcal{B}(\mathbb{R}_+) \otimes \mathbfcal{X} )$ defined as
\begin{equation}\label{eq:nu-Hawkes-G}
\nu(\omega,dt,dy):=\1_{]\!] 0, T_\infty(\omega) [\![}( t)\kappa(\omega,t,dy)dt,
\end{equation}
where, for $A\in \mathbfcal{X}$,
\begin{equation}\label{eq:kappa-G}
\kappa(t,A)=\eta(t,A)+\int_{(0, t)\times \mathcal{X} } f(t,s,x,A)N(ds,dx),
\end{equation}
$\eta$ is a finite  kernel from $(\Omega \times [0,\infty), \mathcal{P})$ to $(\mathcal{X},\mathbfcal{X})$, and $f$ is a kernel from $(\Omega \times \R_+ \times \R_+ \times \mathcal{X}, \cF \otimes \cB(\R_+)\otimes \cB(\R_+)\otimes \mathbfcal{X} )$ to $(\mathcal{X},\mathbfcal{X})$.\footnote{See Appendix A.2 in Last and Brandt \cite{LasBra1995} for the definition of the kernel.}
%For fixed $t\geq 0$ and $A\in \mathbfcal{X}$, the integral
%$\int_{(0, t)\times \mathcal{X} } f(t,s,x,A)N(ds,dx)$ in \eqref{eq:kappa-G} is understood as \[(W\ast N)_t,\]
%where
%\[W(s,x)=\1_{(0,t)}(s)f(t,s,x,A);\]
%see Appendix \ref{stochanalysis}.
We assume also  that $f$ is a kernel satisfying:
\begin{enumerate}%[ {(}i{)} ]
	\item
	$f(t,s,x,A)=0$ for $s\geq t$,
	\item  $\theta$ defined as
	\[
	\theta(t,A):=\int_{(0, t)\times \mathbfcal{X}} f(t,s,x,A)N(ds,dx),\quad t\geq 0,\ A\in \mathbfcal{X},
	\]
	is a kernel from $(\Omega \times [0,\infty), \mathcal{P})$ to $(\mathcal{X},\mathbfcal{X})$, which is finite for $t < T_\infty$.
\end{enumerate}
 Clearly, we have
\begin{equation}\label{eq:int-f-N}
\theta(t,A)= \sum_{n:\ T_n < t} f(t,T_n,X_n,A).
\end{equation}
 Note that $\kappa(t,\mathcal{X})$ is finite for any $t < T_\infty$. We additionally assume that $\kappa(t,\mathcal{X})>0$ for all $t\geq0$ and that the integral $ \int_{[0,t]}\kappa(s,A)ds$ is finite for any $A\in \mathbfcal{X} $ and any $t < T_\infty$. This last assumption is satisfied under mild boundedness conditions imposed  on $\eta$ and~$f$.

Note that the process $\nu([0,\cdot ],A)=\int_{[0,\cdot ]}\1_{]\!] 0, T_\infty(\omega) [\![}( s)\kappa(s,A)ds$ is continuous for any set $A\in \mathbfcal{X} $ and thus it is $\bF^N$--predictable.  Consequently,  $\nu$ is a $\bF^N$--predictable random measure.

\medskip \noindent

	\noindent Before we proceed we recall that for a given filtration $\bF$  the random measure $\nu$ is said to be $\bF$-compensator of a random measure $N$ if it is $\bF$-predictable random measure such that it holds
	\[
	\bE \int_0^\infty \int_{E^\Delta} F(v,x) N(dv,dx) =
	\bE \int_0^\infty \int_{E^\Delta} F(v,x) \nu(dv,dx)
	\]
	for every non-negative $\bF$-predictable function $F: \Omega \times [0,\infty) \times \mathcal{X} \rightarrow \bR$.

We are ready to state the underlying definition in this  paper.

\begin{definition}\label{def:genHawkes} Let $N$ be the marked point process  introduced in \eqref{eq:Ngen-G} with
	the corresponding random measure  $N$ defined in \eqref{eq:NH-G}.
	We call $N$ a \textit{generalized  Hawkes process}  on $(\Omega, \cF, \bP)$,  if
	the $(\bF^N,\bP)$--compensator of  $N$, say $\nu$,  is of the form \eqref{eq:nu-Hawkes-G}. The kernel $\kappa$ is called the  \textit{Hawkes kernel} for $N$.
	\qed
	\end{definition}

\begin{remark}\label{rem:(0,t)}
We note that  in our definition of the generalized Hawkes process the integral in \eqref{eq:kappa-G} is taken over the interval $(0,t)$. In the definition of the classic Hawkes process, the corresponding integral is taken over the interval $(-\infty,t)$; see eg. \cite{ELL}. The $``(0,t)"$ convention is used by several authors, though, in many applications of classical Hawkes processes (such as in Example \ref{ex:clasicHawkes}) that do not regard stationarity and spectral properties of these processes.  We use this convention here  since we are not considering stationarity and spectral properties of the generalized Hawkes processes.
\end{remark}

\begin{remark}

\noindent (i) Recall that the compensator of a random measure is unique (up to equivalence). Thus, the compensator $\nu$ of $N$ is unique. However, the representation \eqref{eq:nu-Hawkes-G}-\eqref{eq:kappa-G} is not unique, by any means, in general. For any given $\eta$ and $f$ in the representation \eqref{eq:nu-Hawkes-G}-\eqref{eq:kappa-G}, one can always find $\widetilde \eta\ne \eta$ and $\widetilde f\ne f$ such that
\begin{equation}\label{eq:wt-kappa-G}
\kappa(t,dy)=\wt \eta(t,dy)+\int_{(0, t)\times \mathcal{X} } \wt f(t,s,x,dy)N(ds,dx).
\end{equation}
 \noindent (ii) With a slight abuse of terminology we refer to $\kappa$ as to the Hawkes intensity kernel of $N$.
  Accordingly, we refer to the quantity $\kappa(t,A)$ as to the intensity at time $t$ of the event regarding process $N$ and amounting  to the marks of $N$ taking values in the set $A$, or, for short, as to the intensity at time $t$ of marks of $N$ taking values in $A$. \qed
\end{remark}

\begin{remark}\label{rem:law-of-N-under-P}
	Since $\bF^N_0$ is a completed trivial $\sigma$-field, then it is a consequence of Theorem 3.6 in \cite{Jac1975} that the compensator $\nu$ determines the law of $N$ under $\bP$, and, consequently, the Hawkes kernel $\kappa$  determines the law of $N$ under $\bP$. 	\qed
\end{remark}

\subsection{Existence of a generalized Hawkes process}\label{sec:existence}
 We will now demonstrate that for an arbitrary measure $\nu $ of the form \eqref{eq:nu-Hawkes-G}  there exists a Hawkes process having $\nu$ as $\bF^N$--compensator. Towards this end we will consider the underlying canonical space.
Specifically, we take $(\Omega, \mathcal{F})$ to be the canonical space of multivariate marked point processes
with marks taking values in $\mathcal{X}^\partial$ . That is, $\Omega $ consists of elements $\omega = ((t_n , x_n))_{n \geq 1 }, $
satisfying  $(t_n , x_n)\in (0,\infty]\times \mathcal{X}^\partial$ and
\begin{align*}
& t_n \leq t_{n+1};
\\
& \textnormal{if } t_{n} <\infty, \textnormal{ then }  t_n < t_{n+1};
\\
&  t_{n} =\infty \textnormal{ iff }   x_n = \partial.
\end{align*}
The $\sigma$--field $\mathcal{F}$ is defined to be the smallest $\sigma$--field on $\Omega$ such that
the mappings $T_n:\Omega \rightarrow ([0,\infty], \mathcal{B}[0,\infty] )$, $X_n:\Omega\rightarrow (\mathcal{X}^\partial, \mathbfcal{X}^\partial) $ defined by
\[
T_n(\omega) := t_n, \quad X_n(\omega) := x_n
\]
are measurable for every $n$.

Note that the canonical space introduced above agrees with the definition of canonical space considered in \cite{LasBra1995} (see Remark 2.2.5 therein).
On this space we  denote by $N$ a sequence of measurable mappings
\begin{equation}\label{eq:N}
N=((T_n, X_n))_{n \geq 1},
\end{equation}
Clearly, these mappings satisfy
\begin{enumerate}%[ {(}i{)} ]
	\item $
	T_n \leq T_{n+1}$, and  if $T_n < + \infty$ then  $
	T_n < T_{n+1}$,
	\item  $ X_n = \partial $ iff $T_n = \infty$.
\end{enumerate}
We call such $N$ a canonical mapping.

The following result  provides  the existence of  a probability measure $\bP_\nu$
on  $(\Omega,\cF)$ such that the canonical mapping  $N$ becomes a generalized Hawkes process with a given Hawkes kernel $\kappa$, which in a unique way determines the compensator~$\nu$.

\begin{theorem}\label{prop:main} Consider the canonical space $(\Omega,\cF)$ and the canonical mapping $N$ given by \eqref{eq:N}. Let measures $N$ and $\nu$ be associated with this canonical mapping through \eqref{eq:NH-G} and \eqref{eq:nu-Hawkes-G}--\eqref{eq:kappa-G}, respectively. Then, there exists a unique probability measure $\bP_\nu$ on $(\Omega,\cF)$, such that the measure $\nu$ is an $({\mathbb F}^N,\bP_\nu)$--compensator of $N$.
	So, $N$ is a generalized  multivariate Hawkes process on $(\Omega,\cF, \bP_\nu)$.
\end{theorem}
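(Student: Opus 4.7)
The plan is a standard Jacod-type construction on the canonical space: I build $\bP_\nu$ inductively, specifying at each step the regular conditional law of the next jump $(T_{n+1},X_{n+1})$ given the past $\cF^N_{T_n}$, and then invoke the Ionescu--Tulcea extension theorem to paste these one-step kernels into a unique probability measure on $(\Omega,\cF)$. The key observation that makes the inductive step $\cF^N_{T_n}$-measurable is that on $\{T_n<\infty\}$, for every $t\in(T_n,\infty)$,
\[
\kappa(t,A)=\eta(t,A)+\sum_{k=1}^{n} f(t,T_k,X_k,A)\,\1_{\{T_k<t\}},
\]
because $N$ restricted to $(0,t)\times\mathcal{X}$ carries exactly the atoms $(T_k,X_k)$, $k\le n$ (the next atom appears only at $T_{n+1}>T_n$). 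Hence the Hawkes kernel restricted to $(T_n,\infty)\times\mathbfcal{X}$ is a bona fide $\cF^N_{T_n}$-measurable kernel that can be used to define the conditional distribution of the next jump.

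Conditionally on $\cF^N_{T_n}$ and on $\{T_n<\infty\}$, I prescribe the survival function
\[
\bP_\nu(T_{n+1}>t\mid \cF^N_{T_n})=\exp\!\left(-\int_{T_n}^{t}\kappa(s,\mathcal{X})\,\d s\right),\qquad t\ge T_n,
\]
setting $T_{n+1}=\infty$ and $X_{n+1}=\partial$ whenever this survival function has a strictly positive limit at infinity; and, conditionally on $\{T_{n+1}=t<\infty\}$, I let $X_{n+1}$ have the (well-defined, since $\kappa(t,\mathcal{X})>0$) law $\kappa(t,\cdot)/\kappa(t,\mathcal{X})$. The standing positivity and finiteness assumptions on $\kappa$, $\eta$, and $f$ guarantee these are honest probability kernels, so Ionescu--Tulcea produces $\bP_\nu$ on $(\Omega,\cF)$. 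It then remains to check that $\nu$ of \eqref{eq:nu-Hawkes-G}--\eqref{eq:kappa-G} is the $(\bF^N,\bP_\nu)$--compensator of $N$. By a monotone class argument it suffices to verify
\[
\bE_{\bP_\nu}\!\int_0^\infty\!\!\int_{\mathcal{X}} F(v,y)\,N(dv,dy)=\bE_{\bP_\nu}\!\int_0^\infty\!\!\int_{\mathcal{X}} F(v,y)\,\nu(dv,dy)
\]
for $\bF^N$-predictable $F$ of the form $\1_{(T_n,T_{n+1}]}(v)\,H(v,y)$ with $H$ bounded and $\cF^N_{T_n}\otimes\cB(\bR_+)\otimes\mathbfcal{X}$-measurable; both sides can be computed explicitly from the one-step conditional kernels above and seen to agree. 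Alternatively, one may invoke Theorem~4.1.11 (together with Proposition~4.2.4) in \cite{LasBra1995}, which identifies precisely this compensator from such conditional jump-time/mark kernels.

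Uniqueness of $\bP_\nu$ then follows at once from Remark \ref{rem:law-of-N-under-P}: since $\cF^N_0$ is $\bP_\nu$-trivial, Theorem~3.6 in \cite{Jac1975} ensures that the compensator $\nu$ determines the law of $N$, so two probability measures admitting $\nu$ as their $\bF^N$--compensator must coincide. The main technical obstacle I anticipate is the bookkeeping around explosion: the conditional survival function above may fail to vanish at infinity, so $T_{n+1}=\infty$ can occur with positive probability, and even if every $T_n$ is finite the limit $T_\infty=\lim_n T_n$ may be finite. The indicator $\1_{]\!]0,T_\infty[\![}(t)$ in \eqref{eq:nu-Hawkes-G} together with the assumption that $\int_{[0,t]}\kappa(s,A)\,\d s<\infty$ for $t<T_\infty$ are exactly what is needed to reconcile the construction with the compensator formula on the (possibly proper) set $\{t<T_\infty\}$, and checking that the predictability and integrability of $\nu$ are preserved through the iteration is the only delicate point; everything else is a direct specialisation of standard marked point-process machinery.
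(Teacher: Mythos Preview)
Your proposal is correct, but it takes a genuinely different route from the paper's proof. You build $\bP_\nu$ from scratch via the Jacod/Ionescu--Tulcea machinery: specify the one-step conditional kernel for $(T_{n+1},X_{n+1})$ given $\cF^N_{T_n}$, paste, and then verify the compensator identity on predictable generators. The paper instead invokes Theorem~8.2.1 of Last--Brandt \cite{LasBra1995} as a black box, which already packages that entire construction (existence \emph{and} uniqueness) once three hypotheses on the total-mass measure $\bar\alpha(\omega,dt)=\nu(\omega,dt,\mathcal{X})$ are checked: $\bar\alpha(\{0\})=0$, $\bar\alpha(\{t\})\le1$, and $\bar\alpha([\![\pi_\infty,\infty[\![)=0$ where $\pi_\infty=\inf\{t:\bar\alpha((0,t])=\infty\}$. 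The first two are trivial since $\nu$ has a density in $t$; the only work in the paper is the third, which reduces to showing $\pi_\infty\ge T_\infty$ so that the indicator $\1_{]\!]0,T_\infty[\![}$ in \eqref{eq:nu-Hawkes-G} kills $\bar\alpha$ beyond $\pi_\infty$.

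What each approach buys: the paper's is much shorter and delegates the delicate explosion bookkeeping you flagged to Last--Brandt's general theory; yours is more self-contained and makes transparent exactly where the structure of the Hawkes kernel enters (your observation that $\kappa$ on $]\!]T_n,T_{n+1}]\!]$ depends only on the first $n$ atoms is the crux, and it is implicit but not spelled out in the paper's route). One small imprecision in your write-up: the displayed formula for $\kappa(t,A)$ as a finite sum holds only for $t\in(T_n,T_{n+1}]$, not for all $t\in(T_n,\infty)$; this does not affect the construction, since that interval is exactly what is needed to define the $(n{+}1)$-th conditional law, but you should state it that way. Your appeal to Jacod's Theorem~3.6 for uniqueness is fine, though note that in the paper's route uniqueness also comes directly from Last--Brandt~8.2.1.
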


\begin{proof}

	We will use Theorem  8.2.1 in \cite{LasBra1995} with
	$\textbf{X}=\mathcal{X}$,
	$\varphi=\omega$,
	%	$K(\omega,t,dx)=\frac{\kappa(\omega,t,dx)}{\kappa(\omega,t,\mathcal{X}^\Delta)}$
	%	for $t<T_\infty(\omega)$ and
	%	for
	%	$t\geq T_\infty(\omega)$,
	and with
	\begin{equation}\label{eq:balphadef}
	\bar \alpha(\omega,dt):=\nu(\omega,dt,\mathcal{X}) =\1_{]\!] 0, T_\infty(\omega) [\![}( t)\kappa(\omega,t,\mathcal{X})dt,
	\end{equation}
	from which we will conclude the assertion of theorem.
	Towards this end, we will verify that all assumptions of  the said theorem are satisfied in the present case.
	As already observed, the random measure $\nu$ is $\bF^N$--predictable. Next, let  us fix $\omega \in \Omega$. Given \eqref{eq:balphadef} we see that
	%measure $\nu$ satisfies condition (4.2.11) in \cite{LasBra1995}, and that
	$\bar \alpha$ satisfies the following equalities
	\[
	\bar \alpha(\omega,\set{0}) = 0, \qquad \bar \alpha(\omega,\set{t}) = 0 \leq 1, \quad t\geq 0,
	\]
	which correspond to conditions (4.2.6) and (4.2.7) in \cite{LasBra1995}, respectively.
	It remains to show that condition (4.2.8) holds as well, that is
	\begin{equation}\label{eq:balpha}
	\bar \alpha( \omega,[\![ \pi_\infty (\omega), \infty [\![ ) =0,
	\end{equation}
	where
	\[
	\pi_\infty (\omega):= \inf \set{ t\geq 0: \bar \alpha(\omega,(0,t]) = \infty }.
	\]
	To see this, we first note that  \eqref{eq:balphadef}  implies
	\[
	\bar \alpha( \omega,[\![ T_\infty (\omega), \infty [\![ ) =0.
	\]
	Thus it suffices to show that $\pi_\infty (\omega)\geq T_\infty (\omega)$.
	By definition of $\bar \alpha$ we can write
	\[
	\bar \alpha(\omega,(0,t])
	=
	\begin{cases}
	\int_0^t
	\kappa(\omega,s,\mathcal{X})
	ds, &\ t < T_\infty(\omega), \\
	\int_0^{ T_\infty(\omega) }
	\kappa(\omega,s,\mathcal{X}) ds, & \ t \geq   T_\infty(\omega).
	\end{cases}
	\]
	If $T_\infty(\omega) = \infty$, then we clearly have  $\pi_\infty(\omega) = \infty = T_\infty(\omega)$.
	
	Next, if $T_\infty(\omega) < \infty$, then $\lim_{t \uparrow T_\infty(\omega)} \bar \alpha(\omega,(0,t]) =a $.
	We need to consider two cases now: $a=\infty$ and $a<\infty$.
	
	If $a= \infty $, then $\bar \alpha(\omega,(0,t]) = \infty$ for $t\geq T_\infty(\omega)$, and,
	$\bar \alpha(\omega,(0,t])<\infty$ for $t<  T_\infty(\omega)$  in view of our assumptions imposed on $\kappa$ in the beginning of this section. 	This implies that $\pi_\infty(\omega) = T_\infty(\omega)$.
	
	If $a < \infty$, then $\bar \alpha(\omega,(0,t]) =a<\infty$ for $t\geq T_\infty(\omega)$, hence $\pi_\infty(\omega) =\infty\geq T_\infty(\omega)$.
	Thus,
	$\pi_\infty(\omega) \geq T_\infty(\omega)$, which implies that \eqref{eq:balpha} holds.
	
	Since $\omega$ was arbitrary, we conclude that for all $\omega\in \Omega$ conditions  (4.2.6)-(4.2.8) in \cite{LasBra1995} are satisfied.
	So, applying Theorem 8.2.1 in \cite{LasBra1995} with  $\beta=\nu$, we obtain that there exists a unique probability measure $\bP_\nu$ such that  $\nu$ is a ${\mathbb F}^N$--compensator of $N$ under $\bP_\nu$.
\end{proof}

\subsection{Cluster interpretation of the generalized  Hawkes processes}\label{sec:cluster}

The classical Hawkes processes are conveniently interpreted, or represented, in terms of so called clusters. This kind of representation is sometimes called immigration and birth representation. We refer to \cite{HawOak1974} and \cite{Lau2015}.

Generalized Hawkes processes also admit cluster representation. The dynamics of cluster centers, or the immigrants, is directed by $\eta$. Specifically, $\eta(t,A)$ is the time-$t$ intensity of arrivals of immigrants with marks belonging to set $A$. The dynamics of the off-springs is directed by $f$. Specifically, $f(t,s,x,A)$ represents the time-$t$ intensity of births of offsprings with marks in set $A$ of either an immigrant with mark $x$ who arrived at time $s$, or of an offspring with mark $x$ who was born at time $s$.

The cluster interpretation will be exploited in a follow-up work for asymptotic analysis of generalized Hawkes processes.

\section{Generalized multivariate Hawkes process}\label{GMHP}

We now introduce the concept of a generalized multivariate Hawkes
process, which is a particular case of the concept of a generalized Hawkes process.

\subsection{Definition}
We first construct an appropriate mark space. Specifically, we fix an integer $d\geq 1$ and we let $(E_i, \cE_i)$, $i=1,\ldots,d$,  be  some non-empty Borel spaces,
and  $\Delta $ be a dummy mark, the meaning of which will be explained below.
	Very often, in practical modelling,  spaces $E_i$ are discrete.
The instrumental rationale for considering a discrete mark space is that in most of the applications of the Hawkes processes that we are familiar with and/or we can imagine, a discrete mark space is sufficient to account for the intended features of the modeled phenomenon.

We set $E^\Delta_i := E_i \cup \Delta$, and we denote by $\cE^\Delta_i$ the sigma algebra on $E^\Delta_i$ generated by $\cE_i$. Then, we define a mark space, say $E^\Delta$,  as
\begin{equation}\label{eq:marks}
E^\Delta := E^\Delta_1 \times E^\Delta_2 \times \ldots \times E^\Delta_d
\setminus (\Delta, \Delta, \ldots, \Delta ).
\end{equation}
By $\cE^\Delta$ we denote a trace sigma algebra of $\otimes_{i=1}^d  \cE^\Delta_i $ on $E^\Delta$, i.e.
\[
\cE^\Delta := \Big\{ A \cap E^\Delta : A \in \otimes_{i=1}^d  \cE^\Delta_i
\Big\}.
\]
Moreover, denoting by  $\partial_i$ the point which is external to  $E^\Delta_i$, we define
$E^\partial_i := E_i^\Delta \cup \{\partial_i\}$,
and we denote by $\cE^\partial_i$
the sigma algebra generated by $\cE_i$ and $\{\partial_i\}$.
Analogously we define
\[
 E^\partial := E^\Delta  \cup \partial,
\]
where $\partial= (\partial ^1, \ldots, \partial^d)$
is a point external to  $ E^\Delta_1 \times E^\Delta_2 \times \ldots \times E^\Delta_d$ and by $\cE^\partial$
 we denote the sigma field generated by  $\cE^\Delta$ and $\set{\partial}$.

\begin{definition}\label{def:multiHawkesi}
A generalized Hawkes process $N=((T_n, X_n))_{n \geq 1}$ with the mark space $\cX=E^\Delta$  given by \eqref{eq:marks}, and with  $\mathcal{X}^\partial = E^\partial $, is called a  \textit{generalized multivariate Hawkes process (of dimension $d$)}. \qed
\end{definition}

Note that a necessary condition for generalized Hawkes processes to feature the self-excitation  and mutual-excitation is that $
f\neq 0$.  We refer to Example \ref{commonevents} for interpretation of the components $\eta$ and $f$ of the kernel $\kappa$ in case of a generalized multivariate Hawkes process.

 We interpret  $T_n\in (0,\infty)$ and $X_n \in E^\Delta$ as the event times of $N$ and as the corresponding mark values, respectively. Thus, if $T_n < \infty$ we have\footnote{Note that here  $d$ is the number of components in $X_n$, and $n$ is the index of the $n-th$ element in the sequence $( X_n)_{n \geq 1}$. }
 \[
 X_n =(X^i_n,i=1,2,\ldots, d
 ) , \quad \text{where} \quad
 X^i_n
 \in E^\Delta_i.
 \]
Also, we interpret $X^i$ as the marks associated with $i$-th coordinate of $N$ (cf.  Definition \ref{def:i-th}). With this interpretation, the equality   $X^i_n(\omega)=\Delta$ means that there  is no event taking place with regard to the
$i$-th coordinate of $N$ at the (general) event time $T_n(\omega)$.
 In other words, no event occurs with respect to the $i$-th coordinate of $N$ at time $T_n(\omega)$.

\begin{definition}\label{common-times}
 We say that $T_n(\omega)$ is a common event time for a multivariate Hawkes process $N$ if there exist $i$ and $j$, $i\ne j$, such that $X^i_n(\omega) \in E_i$ and  $X^j_n(\omega) \in E_j$. We say that process $N$ admits common event times if
 \[
 \bP \Big( \omega \in \Omega : \exists n \text{ such that } T_n(\omega) \textnormal{ is a common event time }\Big) > 0
 \]
 Otherwise we say that process $N$ admits no common event times. \qed
 \end{definition}

Definition \ref{common-times} generalizes that in Bremaud and Massouli \cite{BreMas1996}  and Liniger \cite{lin2009}.
In particular, with regard to the concepts of multivariate Hawkes processes studied in  Liniger \cite{lin2009}, the genuine multivariate Hawkes processes \cite{lin2009} admits no common event times, whereas  in the case of
pseudo-multivariate Hawkes process \cite{lin2009}  all event times are common.

\subsection{The i-th coordinate of a generalized multivariate Hawkes process~$N$}
We start with
\begin{definition}\label{def:i-th}
We define the $i-th$ coordinate $N^i$ of $N$ as
\begin{align} \label{N-i}
N^i((0,t], A) := \sum_{n\geq 1} \1_{\{ T_n \leq t  \}}\1_{\{X_n \in A^i \}},
\end{align}
for $A \in \cE_i$ and $t\geq 0$, where
\begin{equation}\label{A-i}
A^i =
\Big(
\bigtimes_{j=1}^{i-1}   E_j^\Delta
\Big)
\times A \times
\Big(
\bigtimes_{j=i+1}^{d}   E_j^\Delta
\Big).
\end{equation} \qed
\end{definition}

Clearly, $N^i$ is  a MPP  and
\[N^i((0,t], A)=N((0,t],A^i).\]
Indeed, the $i$-th coordinate process $N^i$ can be represented as a sequence $N^i =(T^i_k, Y^i_k)_{k \geq 1}$, which is related to the sequence $(T_n, X^i_n)_{n \geq 1}$ as follows
\begin{align}\label{eq:Ni-def3}
(T^i_k, Y^i_k) &= \begin{cases}
(T_{m^i_k}, X^i_{m^i_k}) & \text{if  } m^i_k < \infty,  \\
(T_{m^i_{\hat k^i}+k-\hat k^i}, \Delta)  & \text{if  }  m^i_k = \infty \text{ and } T_{\infty}<\infty,\\
(\infty, \partial^i) & \text{if  }  m^i_{k}=\infty \text{ and } T_{\infty}=\infty,
\end{cases}		
\end{align}
where $\hat k^i= \max\{n\,:\, m^i_n < \infty\}$,
with $m^i$ defined as
\begin{align*}
m^i_1 &= \inf \set{ n \geq 1 : X^i_n \in E_i },  \\
m^i_k &= \inf \set{ n > m^i_{k-1} :  X^i_n \in E_i } \quad\textnormal{for } k >1.  	
\end{align*}

We clearly have

\begin{equation}\label{eq:NHi}
N^i((0,t], A) = \sum_{k\geq 1} \1_{\{ T^i_k \leq t  \}}\1_{\{Y^i_k \in A \}}, \quad  A \in \cE_i.
\end{equation}

In particular this means that for the $i$-th  coordinate $N^i$ the times $T_n(\omega)$ such that $X^i_n(\omega) = \Delta$ are disregarded as event times for this coordinate since the events occurring with regard to the entire $N$ at these times do not affect the $i$-th coordinate.

We define the completed filtration	 ${\mathbb F}^{N^i}=(\F^{N^i}_t,\ t\geq 0)$
generated by $N^i$ in analogy to ${\mathbb F}^N$; specifically  $\F^{N^i}_t$
is the $\bP$--completion of the $\sigma$--field
$
\sigma(N^i((s,r]\times A)\, :\, 0\leq s<r\leq t,\ A\in \cE_i),\ t\geq 0.
$
In view of Theorem 2.2.4 in \cite{LasBra1995} the filtration ${\mathbb F}^{N^i}$ satisfies the usual conditions.

We  define the explosion time $T^i_\infty$ of $N^i$ as
\[
	T^i_\infty := \lim_{n \rightarrow \infty} T^i_n.
	\]
Clearly, $T^i_\infty \leq T_\infty$.

We conclude this section with providing some more insight into the properties of the measure $N^i$. Towards this end, we first observe that the measure $N^i$ is both $\bF^N$--optional and $\bF^{N^i}$--optional. Subsequently, we will derive  the compensator of $N^i$ with respect to $\bF^N$ and  the compensator of $N^i$ with respect to $\bF^{N^i}.$ The following Proposition \ref{prop:compfulli} and Proposition \ref{prop:nutilde} come handy in this regard.

\begin{proposition}\label{prop:compfulli}
 Let $N$ be a generalized multivariate Hawkes process with Hawkes kernel $\kappa$.
 Then the $({\mathbb F}^N,\mathbb{P})$--compensator, say $\nu^i,$ of measure $N^i$ defined in \eqref{N-i} is given as
\begin{equation}\label{eq:nui}
\nu^i(\omega,dt,dy_i)=\1_{]\!] 0; T^i_\infty(\omega) [\![} (t)\kappa^i(\omega,t,dy_i)dt,
\end{equation}
where
\begin{equation}\label{eq:kappai}
\kappa^i(t,A):=\kappa(t,A^i),\quad t\geq 0,\ A\in \mathcal{E}_i,
\end{equation}
with $A^i$ defined in \eqref{A-i}.
\end{proposition}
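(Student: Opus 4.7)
The plan is to verify the two defining requirements of a compensator: (i) the candidate measure $\nu^i$ given by \eqref{eq:nui}--\eqref{eq:kappai} is $\bF^N$-predictable, and (ii) for every non-negative $\bF^N$-predictable function $F:\Omega\times[0,\infty)\times E_i\to\bR$,
\[
\bE\int_0^\infty\!\!\int_{E_i} F(t,y_i)\, N^i(dt,dy_i)=\bE\int_0^\infty\!\!\int_{E_i} F(t,y_i)\, \nu^i(dt,dy_i).
\]
Predictability of $\nu^i$ is immediate: for each $A\in\cE_i$ the process $t\mapsto \nu^i([0,t],A)=\int_0^t \1_{]\!] 0,T^i_\infty[\![}(s)\,\kappa(s,A^i)\,ds$ inherits continuity from $\nu([0,\cdot],A^i)$, hence is $\bF^N$-predictable.

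The core step is a standard projection/extension argument that exploits the identity $N^i((0,t],A)=N((0,t],A^i)$ recorded in Definition \ref{def:i-th}. Given $F$ as above, I would extend it to $\widetilde F:\Omega\times[0,\infty)\times E^\Delta\to\bR$ by
\[
\widetilde F(\omega,t,x):=F(\omega,t,x^i)\,\1_{\{x^i\in E_i\}}.
\]
A monotone class argument, starting from generators of $\cP^{\bF^N}(\mathbfcal{X})$ of the form $B\times(s,t]\times D$ with $D\in\otimes_{j=1}^d\cE^\Delta_j$, shows that $\widetilde F$ is $\bF^N$-predictable on $\Omega\times[0,\infty)\times E^\Delta$. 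Then $\int F\,dN^i=\int\widetilde F\,dN$ pathwise by \eqref{N-i}--\eqref{A-i}, and the compensator property of $\nu$ applied to $\widetilde F$ gives
\[
\bE\int_0^\infty\!\!\int_{E^\Delta}\widetilde F(t,x)\,N(dt,dx)=\bE\int_0^\infty\!\1_{]\!] 0,T_\infty[\![}(t)\int_{E^\Delta} F(t,x^i)\1_{\{x^i\in E_i\}}\kappa(t,dx)\,dt.
\]
By definition \eqref{eq:kappai}, the inner integral equals $\int_{E_i}F(t,y_i)\,\kappa^i(t,dy_i)$, which is the desired integrand against $\nu^i$.

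What remains is to reconcile the indicator $\1_{]\!] 0,T_\infty[\![}$ appearing above with the $\1_{]\!] 0,T^i_\infty[\![}$ in the claimed formula \eqref{eq:nui}. I would verify that in fact $T^i_\infty=T_\infty$ pathwise, by running through the three cases in \eqref{eq:Ni-def3}: if $m^i$ terminates at $\widehat k^i<\infty$, then either $T_\infty=\infty$ and eventually $T^i_k=\infty$, or $T_\infty<\infty$ and the indices $m^i_{\widehat k^i}+k-\widehat k^i\to\infty$ so that $T^i_k\to T_\infty$; if $m^i$ does not terminate, $(T^i_k)=(T_{m^i_k})$ is a subsequence of the non-decreasing sequence $(T_n)$, so its limit coincides with $\sup_n T_n=T_\infty$. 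This pathwise equality turns the two indicators into the same function, and the identification of $\nu^i$ as the $(\bF^N,\bP)$-compensator of $N^i$ follows. The only genuinely non-cosmetic point in the proof is the predictability of $\widetilde F$; everything else is a routine application of the compensator property of $\nu$ together with the set identity $N^i((0,t],A)=N((0,t],A^i)$.
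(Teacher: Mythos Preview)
Your argument is correct, but it proceeds along a different route than the paper's proof. The paper does not verify the defining integral identity directly. Instead, it invokes the abstract existence and uniqueness of the $\bF^N$-compensator $\nu^i$ of $N^i$ (with the normalization $\nu^i([\![T^i_\infty,\infty[\![\times E_i)=0$), and then compares this abstract $\nu^i$ with the candidate $\1_{]\!]0,T_\infty[\![}\kappa^i(u,A)\,du$: for each $n$ and $A$ both $N^i((0,t\wedge T_n]\times A)-\nu^i((0,t\wedge T_n]\times A)$ and $N^i((0,t\wedge T_n]\times A)-\int_0^{t\wedge T_n}\1_{]\!]0,T_\infty[\![}\kappa^i(u,A)\,du$ are martingales, so their difference is an $\bF^N$-predictable martingale of integrable variation vanishing at $0$, hence identically zero. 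The passage from the indicator $\1_{]\!]0,T_\infty[\![}$ to $\1_{]\!]0,T^i_\infty[\![}$ is then handled by combining $T^i_\infty\le T_\infty$ with the normalization $\nu^i([\![T^i_\infty,\infty[\![\times E_i)=0$.

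Your approach---extend $F$ to $\widetilde F$ on $E^\Delta$ and feed it into the compensator identity for $\nu$---is more elementary and avoids the abstract uniqueness step entirely. Your observation that in fact $T^i_\infty=T_\infty$ pathwise (which follows cleanly from the three cases in \eqref{eq:Ni-def3}, exactly as you describe) is a genuine simplification over the paper: it disposes of the indicator mismatch in one line, whereas the paper only records $T^i_\infty\le T_\infty$ and leans on the normalization of $\nu^i$ to finish. The paper's route has the minor advantage that it fits the standard ``compare two compensators via a null predictable martingale'' template; yours has the advantage of being self-contained and of surfacing the equality $T^i_\infty=T_\infty$, which is worth knowing in its own right.
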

\begin{proof}	
According to  Theorems 4.1.11 and 4.1.7 in \cite{LasBra1995} the $i$-th coordinate  $N^{i}$ admits a unique ${\mathbb F}^N$--compensator, say $\nu^i$, with a property that $\nu^i([\![T^i_\infty; \infty [\![ \times E_i) = 0$.
% Thus, in view of Theorem 4.2.2 in \cite{LasBra1995}, we have
%\begin{equation}\label{eq:kappatilde}
%\nu^i(\omega, dt ,dy_i )= \1_{ ]\!] 0 ; T^i_\infty  [\![ } K^i(\omega,t, dy_i) \bar{\alpha}(\omega, dt)
%\end{equation}
%for some ${\mathbb F}^N$--predictable stochastic kernel $K^i$ and predictable kernel $\bar{\alpha}$.
For every $n$ and $A\in \cE_i$ the processes $M^{i,n,A}$ and $\widehat M^{i,n,A}$ given as
\[
	M^{i,n,A}_t = N^i((0,t \wedge T_n ] \times  A ) - \int_0^{t \wedge T_n} \1_{]\!] 0;T_\infty [\![} (u)\kappa^i(u,A)du, \ t\geq 0,
\]
and
\[
	\widehat M^{i,n,A}_t = N^i((0,t \wedge T_n ] \times  A ) -
	\nu^i( (0,t \wedge T_n ] \times  A ) , \ t\geq 0,
%	\int_0^{t \wedge T_n} \1_{ ]\!] 0 ; T^i_\infty  [\![ } K^i(\omega,u, dy_i) \bar{\alpha}(\omega, du), \ t\geq 0,
\]
are $({\mathbb F}^N,\bP)$--martingales. Hence the process
\[
\int_0^{t \wedge T_n} \Big( \1_{]\!] 0; T_\infty [\![} (u)\kappa^i(u,A)du - \nu^i( du ,  A )\Big), \  t\geq 0,
\]
is an $\bF^N$-predictable martingale.
Since it is of integrable variation and null at $t=0$ it is null for all $t\geq 0$ (see e.g. Theorem VI.6.3 in \cite{HeWanYan1992}).
From the above and the fact that $T^i_\infty \leq T_\infty$  we deduce that
\begin{align*}
\int_0^{t \wedge T_n} \1_{]\!] 0 ; T^i_\infty [\![} (u)\kappa^i(u,A) du
&= \nu^i( (0,t \wedge T_n ] \times  A ) , \qquad t\geq 0.
\end{align*}
This proves the proposition.
\end{proof}

\begin{remark}\label{rem:krg-ker} Note that for each $i$, the function  $\kappa^i$ defined in \eqref{eq:kappai} is a measurable kernel from $(\Omega \times \mathbb{R}_+,  \cP  \otimes \mathcal{B}(\mathbb{R}_+))$ to $(E_i,\cE_i)$.  It is important to observe that, in general,  there is no one-to-one correspondence between the Hawkes kernel $\kappa$ and all the marginal kernels $\kappa^i$, $i=1,\ldots,d$. We mean by this  that may exist another Hawkes kernel, say $\widehat \kappa$, such that $\widehat \kappa \neq \kappa$ and
	\begin{equation}\label{eq:kappai-hat}
	\kappa^i(t,A)=\widehat \kappa(t,A^i),\quad t\geq 0,\ A\in \mathcal{E}_i,\ i=1,\ldots d.
	\end{equation}	
	\qed		
\end{remark}

\begin{remark}\label{rem:law-of-N}
		As we know from  Remark \ref{rem:law-of-N-under-P} the Hawkes kernel $\kappa$ determines the law of $N$.
	However, in view of Remark \ref{rem:krg-ker}, the kernel $\kappa^i$ may not determine the law of $N^i$. It remains to be an open problem for now to determine  sufficient conditions under which the law of $N^i$ is determined by $\kappa^i$. This problem is a special case of a more general problem: what are  general sufficient conditions under which characteristics of a semimartingale determine the law of this semimartingale.
	\qed
\end{remark}
The following important result gives the ${\mathbb F}^{N^i}$--compensator of measure $N^i$.
\begin{proposition}\label{prop:nutilde}
	Let $N$ be a generalized multivariate Hawkes process with Hawkes kernel $\kappa$.
Then the ${\mathbb F}^{N^i}$--compensator of measure $N^i$, say $\widetilde \nu^i,$ is given as
\begin{equation}\label{eq:tildenui}
\widetilde \nu^i(\omega,dt,dy_i)=(\nu^i)^{p, {\mathbb F}^{N^i}}(\omega,dt,dy_i),
\end{equation}
where $(\nu^i)^{p, {\mathbb F}^{N^i}}$
is the dual predictable projection of $\nu^i$ on ${\mathbb F}^{N^i}$ under $\bP$.
\end{proposition}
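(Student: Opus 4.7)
The plan is to derive $\widetilde\nu^i$ by a two-step projection argument: first invoke Proposition \ref{prop:compfulli}, which identifies $\nu^i$ as the $({\mathbb F}^N,\bP)$-compensator of $N^i$, and then descend to the smaller filtration ${\mathbb F}^{N^i}$ by applying the dual predictable projection operator. The key structural input is the filtration inclusion ${\mathbb F}^{N^i}\subset{\mathbb F}^N$, which holds because the generators of ${\mathbb F}^{N^i}$ can be rewritten through \eqref{A-i} as $N^i((s,r]\times A)=N((s,r]\times A^i)$ for $A\in\cE_i$, and the right-hand side is ${\mathbb F}^N$-adapted.

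First, I would recall the characterization of a compensator: a predictable random measure $\mu$ is the $\bF$-compensator of $N^i$ under $\bP$ if and only if
\[
\bE\int_0^\infty\!\!\int_{E_i} F(s,y)\,N^i(ds,dy) \;=\; \bE\int_0^\infty\!\!\int_{E_i} F(s,y)\,\mu(ds,dy)
\]
for every non-negative $\bF$-predictable function $F$. Applying this criterion with $\bF={\mathbb F}^{N^i}$ and $\mu=(\nu^i)^{p,{\mathbb F}^{N^i}}$, the goal reduces to verifying (a) ${\mathbb F}^{N^i}$-predictability of $(\nu^i)^{p,{\mathbb F}^{N^i}}$ and (b) the integration identity above.

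Claim (a) is immediate from the very definition of the dual predictable projection. For claim (b), I would fix a non-negative ${\mathbb F}^{N^i}$-predictable function $F$ on $\Omega\times[0,\infty)\times E_i$. Because ${\mathbb F}^{N^i}\subset{\mathbb F}^N$, such an $F$ is automatically ${\mathbb F}^N$-predictable, so Proposition \ref{prop:compfulli} yields
\[
\bE\int F\,dN^i \;=\; \bE\int F\,d\nu^i.
\]
By the defining property of the dual predictable ${\mathbb F}^{N^i}$-projection applied to the non-negative $\bF^N$-optional (hence measurable) measure $\nu^i$, the right-hand side equals $\bE\int F\,d(\nu^i)^{p,{\mathbb F}^{N^i}}$. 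Chaining these two equalities gives (b). Uniqueness of the ${\mathbb F}^{N^i}$-compensator (Theorems 4.1.7 and 4.1.11 in \cite{LasBra1995}, as invoked in the proof of Proposition \ref{prop:compfulli}) then forces $\widetilde\nu^i=(\nu^i)^{p,{\mathbb F}^{N^i}}$.

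The only genuine subtlety, and the step I would be most careful about, is the existence and well-definedness of the dual predictable projection of the possibly unbounded measure $\nu^i$ onto ${\mathbb F}^{N^i}$. This is handled by a standard localization along the sequence $(T^i_n)_{n\ge 1}$ of ${\mathbb F}^{N^i}$-stopping times: on each stochastic interval $[\![0,T^i_n]\!]$ the measure $\nu^i$ is of integrable variation (since $N^i((0,T^i_n]\times E_i)=n$ and $\nu^i$ is its ${\mathbb F}^N$-compensator), so the dual predictable projection is well-defined there, and the limit as $n\to\infty$ produces $(\nu^i)^{p,{\mathbb F}^{N^i}}$ as a $\sigma$-finite ${\mathbb F}^{N^i}$-predictable random measure. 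With this mild technical point taken care of, the argument above concludes the proof.
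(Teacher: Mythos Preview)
Your proposal is correct and follows essentially the same approach as the paper: both arguments reduce the claim, via localization along the $\bF^{N^i}$-stopping times $T^i_n$, to showing that the dual predictable projection of the $\bF^N$-compensator $\nu^i$ onto $\bF^{N^i}$ serves as the $\bF^{N^i}$-compensator of $N^i$. The only difference is packaging: the paper invokes Theorem 3.3 in \cite{BieJakJeaNie2018} for the projection step, whereas you spell out that step directly from the defining properties of compensators and dual predictable projections together with the inclusion $\bF^{N^i}\subset\bF^N$.
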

\begin{proof}
Using Theorems 4.1.9 and 3.4.6 in \cite{LasBra1995}, as well as the uniqueness of the compensator, it is enough to show that for any $A\in \cE^i$ and any $n\geq 1$ the process $(\nu^i)^{p, {\mathbb F}^{N^i}}((0,t\wedge T^i_n],A)$,
where $(\nu^i)^{p, {\mathbb F}^{N^i}}$ is the dual predictable projection of $\nu^i$ on ${\mathbb F}^{N^i}$ under $\bP$, is
the ${\mathbb F}^{N^i}$--compensator of the increasing process $N^i((0,t\wedge T^i_n], A),\ t\geq 0$. This however follows from Theorem 3.3 in \cite{BieJakJeaNie2018}.
\end{proof}

\subsection{Examples}

We will provide now some examples of generalized multivariate Hawkes processes.

For $\omega = (t_n,x_n)_{n\geq 1}$,  $t \geq 0$ and  $A \in \cE^\Delta$ we set
\begin{equation}\label{eq:NN}
	N(\omega, (0,t] ,  A) := \sum_{ n \geq 1} \1_\set{ t_n \leq t, x_n \in A}.
\end{equation}

In all examples below we define the kernel $\kappa$ of the form \eqref{eq:kappa-G} with $\eta$ and $f$ properly chosen, so that we may apply Theorem \ref{prop:main} to the effect that there exists a probability measure $\bP_\nu$ on $(\Omega, \mathcal{F})$ such that process  $N$ given by \eqref{eq:NN} is a Hawkes process with the Hawkes kernel equal to $\kappa$. In other words, there exists a probability measure $\bP_\nu$  on $(\Omega, \mathcal{F})$ such that $\nu$ given in \eqref{eq:nu-Hawkes-G}--\eqref{eq:kappa-G} is the $\bF^N$--compensator of $N$ under $\bP_\nu$.

 For a Hawkes process $N$ with a mark space $E^\Delta$ we introduce the following notation
\[
N_t = N((0,t], E^\Delta), \quad t\geq 0.
\]
	Likewise, we denote for $i=1,\ldots,d,$
\[N^i_t=N^i((0,t], E_i), \quad t\geq 0.\]

\begin{example}\label{ex:clasicHawkes}
	\textbf{Classical univariate Hawkes process}

We take $d=1$ and $E_1=\{1\}$, so that $E^\Delta = E_1=\{1\}$.  As usual, and in accordance with \eqref{eq:NH-G}, we identify  $N$ with a point process $(N_t)_{t \geq 0}$. %$N_t:=N((0,t],E^\Delta)$, $t\geq 0.$
Now we take
\[\eta(t,\{1\})=\lambda(t),\]
where $\lambda$ is positive, locally integrable function, and, for $0\leq s\leq t$, we take
\[ f(t,s,1,\{1\})=w(t-s)\]
for some non-negative function $w$ defined on $\R_+$ (recall that $ f(t,s,1,\{1\})=0$ for $ s\geq t$).
Using these objects we define $\kappa$ by
\[
\kappa(t,dy) = \bar{\kappa}(t)\delta_{\{1\}}(dy),
\]
where
\[
\bar{\kappa}(t)=\lambda(t)+\int_{(0 ,t)}\, w(t-s)dN_s.
\]

In case of the classical univariate Hawkes process sufficient conditions under which the explosion time is almost surely infinite, that is
\[T_\infty =\infty \quad \bP_\nu - \textrm{a.s.}\]
are available in terms of the Hawkes kernel.
Specifically,  sufficient conditions for no-explosion are given in %Lemma 1 and Lemma 3 in
\cite{BDHM}:
\[
\lambda \text{ is locally bounded}, \quad \text{and} \quad \int_0^\infty w(u) du < \infty. %\quad \text{ for all } t \geq 0.
\qquad \qed
\]
\end{example}

\begin{example} \textbf{Generalized bivariate Hawkes process with common event times}\label{commonevents}

In the case of a generalized  bivariate Hawkes process $N$ we have $d=2$ and the mark space is given as
 \begin{equation*}
 	E^\Delta = E^\Delta_1 \times E^\Delta_2 \setminus \{ (\Delta, \Delta)\}
 	=
 	\{
 	(\Delta, y_2), (y_1, \Delta), (y_1, y_2):  y_1 \in E_1, y_2 \in E_2
 	\}.
\end{equation*}
Here, in order to define kernel $\kappa$, we take kernel $\eta$ in the form
\[
\eta(t,dy)=
\eta_{1}(t,d y_1) \otimes\delta_{\Delta} (d y_2)
+
 \delta_{\Delta} (d y_1) \otimes \eta_{2}(t,d y_2)
+
\eta_c (t,d y_1, d y_2 ),
\]
where
$\delta_\Delta$ is a Dirac measure, $\eta_{i}$  for $i=1,2$ are probability kernels, from $(\R_+, \cB(\R_+))$ to $(E_i, \cE_i)$
and $\eta_{c}$ is a
probability kernel from $(\R_+, \cB(\R_+))$ to $(E^\Delta, \cE^\Delta)$, satisfying
\[
\eta_c (t,E_1 \times \Delta)
=
\eta_c (t,\Delta \times E_2 ) =0.
\]
Kernel $f$ is given, for $0\leq s\leq t$ and $x=(x_1,x_2)$, by
\begin{align}\nonumber
	f(t,s,& \, x ,dy )
	=    \Big(  w_{1,1} (t,s)g_{1,1}(x_1)\1_{E_1 \times\Delta } (x)  + w_{1,2} (t,s)g_{1,2}(x_2) \1_{\Delta \times E_2} (x) \\ \nonumber
	& + w_{1,c} (t,s) g_{1,c}(x) \1_{E_1 \times E_2 } (x) \Big) \phi_1(x, dy_1 ) \otimes\delta_\Delta(dy_2)  \\ \nonumber
	&+  \Big(  w_{2,1}(t,s)g_{2,1}(x_1)  \1_{E_1 \times\Delta } (x)  +  w_{2,2}(t,s)g_{2,2}(x_2) \1_{\Delta \times E_2} (x) \\ \label{eq:f-Hawkes}
	& +  w_{2,c}(t,s) g_{2,c}(x) \1_{E_1 \times E_2 } (x)\Big) \delta_\Delta(dy_1)  \otimes \phi_2(x, dy_2) \\ \nonumber
	&+
		\Big(  w_{c,1}(t,s) g_{c,1}(x_1)  \1_{E_1 \times\Delta } (x) +  w_{c,2}(t,s) g_{c,2}(x_2) \1_{\Delta \times E_2} (x)    \\ \nonumber
		& + 	 w_{c,c}(t,s) g_{c,c}(x_1, x_2) \1_{E_1 \times E_2 } (x)  \Big) \phi_c (x, dy_1, dy_2),
\end{align}
where $\phi_i$ is a probability kernel from $(E^\Delta, \cE^\Delta )$ to $(E_i, \cE_i)$ for $i=1,2$ and
$\phi_c$ is a probability kernel from $(E^\Delta, \cE^\Delta )$ to $(E^\Delta, \cE^\Delta )$ satisfying
\[
\phi_c (x, E_1 \times \Delta)
=
\phi_c (x, \Delta \times E_2)=0.
\]
%with $\phi_1(x, E_1 ) = \phi_2(x, E_2 )  = \phi_c (x, E_1 \times E_2)=1 $.
The \textit{decay functions} $w_{i,j}$
 and the \textit{impact functions} $g_{i,j}$,
 $i,j=1,2,c$, are appropriately regular and deterministic. Moreover, the decay functions are positive and the impact functions are non-negative.  In particular, this implies that  the kernel $f$ is  deterministic and non-negative.

In what follows we will need  the concept of idiosyncratic group of $I$ coordinates of a generalized bivariate Hawkes process $N$.
For $I=\set{1}$ we define
	\[
	N^{idio,\set{1}}((0,t],A):=N((0,t],A \times \Delta),  \quad t \geq 0, \ A \in \cE_1
	\]
and, likewise, for $I=\set{2}$ we define
\[
N^{idio,\set{2}}((0,t],A):=N((0,t],\Delta \times A),   \quad t \geq 0, \ A \in \cE_2.
\]
Finally, for $I=\set{1,2}$ we define
\[
N^{idio,\set{1,2}}((0,t],A):=N((0,t],A),   \quad t \geq 0, \ A \in \cE_1 \otimes \cE_2.
\]

\medskip \noindent
Clearly, $N^{idio,\mathcal{I}}$ is a MPP. For example, $N^{idio,i}$ is a MPP which  records idiosyncratic events occurring with regard to $X^i$;
that is, events that only regard to $X^i$, so that $X^j_{n}=\Delta$ for $j\ne i$  at times $T_n$ at which these events take place.
Likewise,  $N^{idio,\set{1,2}}$ is a MPP which  records idiosyncratic events occurring with regard to  $X^1$ and $X^2$ simultaneously.
Let us note that \[
N^1 = N^{idio, \set{1}} + N^{idio, \set{1,2}}, \qquad
N^2 = N^{idio, \set{2}} + N^{idio, \set{1,2}}.
\]

\medskip

 We will now interpret various terms that appear in the expressions for $\eta$ and $f$ above:
\begin{itemize}

        \item $\eta_{1}(t,d y_1) \otimes\delta_{\Delta} (d y_2)$ represents autonomous portion of the intensity, at time $t$, of marks of the coordinate $N^1$  taking values in the set $dy_1\subset E_1$ and no marks occurring for $N^2$;

        \item $\eta_c (t,d y_1, d y_2 )$  represents autonomous portion of the intensity, at time $t$, of an event amounting to the marks of both coordinates $N^1$ and $N^2$ taking values in the set $dy_1dy_2\subset E_1\times E_2$;

   \item \begin{align*} &\int_{(0,t)\times E^\Delta }w_{1,1} (t,s)g_{1,1}(x_1)\1_{E_1 \times\Delta } (x)\phi_1(x, dy_1 )
   \otimes\delta_\Delta(dy_2)N(ds,dx) \\ = &\int_{(0,t)\times E_1 }w_{1,1} (t,s)g_{1,1}(x_1) \phi_1((x_1,\Delta), dy_1 )
   \otimes\delta_\Delta(dy_2)N^{idio,1}(ds,dx_1)
   \end{align*}
    represents idiosyncratic impact of the coordinate $N^1$ alone on the intensity, at time $t$, of marks of the coordinate $N^1$  taking values in the set $dy_1\subset E_1$ and no marks occurring for $N^2$;

   \item \begin{align*}
   % &\int_{(0,t)\times E^\Delta }w_{1,2} (t,s)g_{1,2}(x_2) \1_{\Delta \times E_2} (x)\phi_1(x, dy_1 ) \otimes\delta_\Delta(dy_2)dN(ds,dx)\\ =
    &\int_{(0,t)\times E_2 }w_{1,2} (t,s)g_{1,2}(x_2)\phi_1((\Delta,x_2), dy_1 )
   \otimes\delta_\Delta(dy_2)N^{idio,2}(ds,dx_2)
   \end{align*}      represents idiosyncratic  impact of the coordinate $N^2$ alone on the intensity, at time $t$, of an event amounting to the marks of coordinate $N^1$ taking value in the set $dy_1\subset E_1$ and no marks occurring for $N^2$;

    \item \[ \int_{(0,t)\times E^\Delta }w_{1,c} (t,s)g_{1,c}(x) \1_{E_1 \times E_2 } (x)\phi_1(x, dy_1 ) \otimes\delta_\Delta(dy_2)N(ds,dx)\] represents joint impact of the coordinates $N^1$ and $N^2$ on the intensity, at time $t$, of an event amounting to the marks of coordinate $N^1$ taking value in the set $dy_1\subset E_1$ and no marks occurring for $N^2$;

        \item \begin{align*}
        %&\int_{(0,t)\times E^\Delta }w_{c,1}(t,s)g_{c,1}(x_1)  \1_{E_1 \times\Delta } (x)\phi_c (x, dy_1, dy_2)dN(ds,dx)\\ =
        &\int_{(0,t)\times E_1 }w_{c,1} (t,s)g_{c,1}(x_1)\phi_c((x_1,\Delta), dy_1,dy_2 )N^{idio,1}(ds,dx_1)
   \end{align*}
   represents idiosyncratic  impact of the coordinate $N^1$ alone on the intensity, at time $t$, of an event amounting to the marks of both coordinates $N^1$ and $N^2$ taking values in the set $dy_1dy_2\subset E_1\times E_2$;

   \item \[\int_{(0,t)\times E^\Delta }w_{c,c}(t,s)g_{c,c}(x_1)  \1_{E_1 \times\Delta } (x)\phi_c (x, dy_1, dy_2)N(ds,dx)\] represents joint  impact of the coordinates $N^1$ and $N^2$ on the intensity, at time $t$, of an event amounting to the marks of both coordinates $N^1$ and $N^2$ taking values in the set $dy_1dy_2\subset E_1\times E_2$.
                  \end{itemize}

In particular, the terms contributing to occurrence of common events are $\eta_c (t,d y_1, d y_2 )$ and \[\Big(  g_{c,1}(x_1)  \1_{E_1 \times\Delta } (x) +  g_{c,2}(x_2) \1_{\Delta \times E_2} (x)    + 	 g_{c,c}(x_1, x_2) \1_{E_1 \times E_2 } (x)  \Big) \phi_c (x, dy_1, dy_2).\]

Upon integrating $\kappa(t,dy)$ over  $A_1 \times \{ \Delta, E_2 \}$ we get
\allowdisplaybreaks
  \begin{align*}%\label{kappa1-common}
 \kappa^1&(t,A_1)
 =\kappa(t, A_1 \times 	\{ \Delta, E_2 \} ) \nonumber \\
 & = \eta_1(t,A_1) + \eta_c(t,A_1 \times E_2)	\nonumber \\
 &+  \int_{(0,t)\times E_1}
 \Big( w_{1,1} (t,s)  g_{1,1}(x_1)  \phi_1((x_1, \Delta), A_1)  \nonumber \\
 &  \qquad\qquad \qquad+ w_{c,1}(t,s)
 g_{c,1}(x_1)  \phi_c ((x_1, \Delta), A_1 \times E_2 )\Big)  N^{idio,1}(ds,dx_1) \nonumber  \\ 	
 &+  \int_{(0,t)\times E_2} \Big(w_{1,2} (t,s)  g_{1,2}(x_2)   \phi_1((\Delta,x_2), A_1) \\ \nonumber
 &  \qquad\qquad + w_{c,2}(t,s)
 g_{c,2}(x_2) \phi_c ((\Delta,x_2), A_1 \times E_2 )\Big) N^{idio,2}(ds,dx_2)  \\ \nonumber		
 &+  \int_{(0,t)\times E}\Big( w_{1,c} (t,s)   g_{1,c}(x) \phi_1((x_1,x_2), A_1)  \nonumber \\
% &  \qquad - w_{1,1} (t,s)  g_{1,1}(x_1)  \phi_1((x_1, \Delta), A_1) \nonumber \\
% & \qquad-w_{1,2} (t,s)  g_{1,2}(x_2)   \phi_1((\Delta,x_2), A_1)  \nonumber  \\ \nonumber		
 &\qquad +    w_{c,c}(t,s) g_{c,c}(x_1, x_2) \phi_c ((x_1,x_2), A_1 \times E_2 ) \nonumber
% \\
% &  \qquad- w_{c,1}(t,s)  g_{c,1}(x_1)  \phi_c ((x_1, \Delta), A_1 \times E_2 ) \nonumber  \\
% &  \qquad- w_{c,2}(t,s)  g_{c,2}(x_2) \phi_c (((\Delta,x_2), A_1 \times E_2 )
 \Big)
% \1_{E_1 \times E_2 } (x)  N(ds,dx).
 N^{idio, \set{1,2}}(ds,dx).
 \nonumber
 \end{align*}

To complete this example we note that upon setting $\eta_c=0$ and $\phi_c=0$ we produce a generalized bivariate Hawkes process with no common event times.\qed
\end{example}

\section{Mathematical construction of and computational pseudo-algorithm for simulation of a generalized multivariate Hawkes process with deterministic kernels $\eta$ and $f$}\label{constr}

Fix an arbitrary $T >0$.
In this section we first provide a construction of restriction to $[0,T] \times E^\Delta$
 of a generalized multivariate Hawkes process, with deterministic kernels $\eta$ and $f$, via Poisson thinning, that is motivated by a similar construction given in \cite{BJN-buczek}. Then, based on our construction, we present a computational pseudo-algorithm for simulation of a generalized multivariate Hawkes process restricted to $[0,T] \times E^\Delta$.

We are concerned here with a generalized multivariate Hawkes process
 admitting the  Hawkes kernel  of the form
\begin{equation}\label{new-nu}
\kappa(t,dy)= {\eta}(t,dy)+\int_{(0, t)\times E^\Delta } {f}(t,s,x,dy) N(ds,dx),
\end{equation}
where  ${\eta}$ is a deterministic finite kernel from $(\bR_+, \cB(\bR_+))$ to $(E^\Delta, \mathcal{E}^\Delta)$ and ${f}$ is a deterministic finite kernel from $(\bR^2_+ \times E^\Delta, \cB(\bR^2_+) \otimes \cE^\Delta)$ to $(E^\Delta, \mathcal{E}^\Delta)$.

We may, and we do, represent kernels $\eta$, $f$ as
\[
\eta(t,dy) = \eta(t,E^\Delta)Q_1(t,dy) , \quad \text{ where } \quad Q_1(t,dy) =
\frac{\eta(t,dy) }{\eta(t,E^\Delta) }
\1_\set{\eta(t,E^\Delta)  > 0  } +
\delta_\partial (dy) \1_\set{\eta(t,E^\Delta)  = 0  }
, % \1_{(0, \infty) }(\eta(t,E^\Delta))
\]
and
\[
f(t,s,x,d y) = f(t,s,x,E^\Delta) Q_2(t,s,x,dy) ,
\]
where
\[
Q_2(t,s,x,dy) =  \frac{f(t,s,x,d y)  }{f(t,s,x,E^\Delta) }
\1_\set{f(t,s,x,E^\Delta) > 0  }
+
\delta_\partial (dy) \1_\set{f(t,s,x,E^\Delta) = 0  }
.
\]
Note that
$Q_1$ and $Q_2$ are deterministic probability kernels.

Since we are concerned with a restricted Hawkes process we consider a Hawkes kernel $\kappa_T$ which is a restriction to $[0,T]$ of $\kappa$ that is
\begin{equation}\label{eq:kapparest}
\kappa_T(t,dx) = \1_{[0,T]} (t)\kappa(t,dx).
\end{equation}

For simplicity of notation we suppress $T$ in the notation below. So, for example, we will write $f$ rather than $f_T:=\1_{[0,T]}f$.

We make the following standing assumption:

\[
\sup_{t \in [0,T]} \eta(t,E^\Delta) \leq \wh \eta <\infty,\]
for some constant $\wh{\eta} > 0 $
and, for $s\in [0,T]$ and $x\in E^\Delta$
\begin{equation}\label{eq:hatf}
 \sup_{ t \in {[s,T]}} f(t,s,x,E^\Delta)  \leq \wh{f}(s,x) < \infty,
\end{equation}
for some measurable mapping $ \wh{f}: [0,T] \times E^\Delta \rightarrow (0,\infty)$.

{
\subsection{Description of the construction}\label{sec:constr}

Now we  describe a construction of Hawkes process with Hawkes kernel given by \eqref{eq:kapparest}. This construction leads immediately to a pseudo-algorithm, presented in the next section, for simulation of such Hawkes process.

In what follows we will define recursively a sequence of random measures $(N^k)_{k \geq 0}$ that  provide building blocks for our Hawkes process.

Towards this end we first let $\beta$ be the Borel isomorphism between the space $E^\partial$ and a Borel subset of $\bR^d\cup \widehat \partial$, with the convention that $\beta (\partial) = \widehat \partial$.

Our construction will proceed in several steps.

\medskip

Step 1). Let us consider an array $
\{
(Z^{k,j},(U^{k,j}_i, V^{k,j}_i, W^{k,j}_i)_{i=1}^\infty)\}_{k=0,j=1}^\infty$ of independent identically distributed random variables with uniform distribution on $(0,1]$.
Let $D:[0, \infty) \times (0,1] \rightarrow \bN$ be a measurable function such that
\[
\int_{(0,1]} \1_\set{ k }(D(\lambda,u) ) du  = e^{-\lambda}\frac{\lambda^k}{k!},\quad  k=0,1, \ldots
\]
where we use the convention that $0^0=1$. Therefore, for a random variable $U$ uniformly distributed on $(0,1]$ the random variable $D(\lambda,U)$ has Poisson distribution with parameter $\lambda \geq 0$, where we extend the concept  of Poisson distribution by allowing $\lambda = 0$.
Moreover let $G_1: [0,T] \times (0,1] \rightarrow E^\Delta$ be a measurable function  such that
\[
\int_{(0,1]}  \1_{A}(G_1(t, u)) du
=Q_1(t,A), \quad A \in \cE^\Delta,
\]
and
 $G_2: [0,T] \times [0,T] \times E^\Delta \times (0,1] \rightarrow E^\Delta$  be a measurable function  such that
\[
\int_{(0,1]} \1_{A }(G_2(t, s, y, u)) du
=Q_2(t,s,y,A), \quad A \in \cE^\Delta.
\]
Existence of such functions $G_1$ and $G_2$ is asserted by Lemma 3.22 in \cite{Kal2002}.

 We use the left open intervals of integration above so to be consistent with the the rest of the construction. The reason that we work with left open intervals in the rest of the construction  is that the births of the offsprings occur after the appearance of their parents (e.g., after arrivals of the immigrants), see  Section \ref{sec:cluster}. This feature is explicitly accounted for in the construction presented here.

\medskip

Step 2). Using $(Z^{0,1},(U^{0,1}_i, V^{0,1}_i, W^{0,1}_i)_{i=1}^\infty)$  we define a random measure $N^0$ on $\mathcal{B}(\bR_+)\otimes \cE^\Delta$:
\begin{equation}\lab{N0}
N^0(dt,dx)
= \sum_{i=1}^{\infty} \delta_{(T^0_i, X^0_i)}(dt,dx) \1_\set{ i \leq P^0, A^0_i \leq  \eta(T^0_i, E^\Delta) },
\end{equation}
where
\[
P^0 = D(T\hat{\eta}, Z^{0,1}),\quad
T^{0}_i = TU^{0,1}_i, \quad A^{0}_i =\hat{\eta} V^{0,1}_i , \quad X^{0}_i = G_1(T^{0}_i, W^{0,1}_i) .
\]

We note that $P^0 $ is a Poisson random variable with parameter $T \wh \eta $,  which is independent of the
iid sequence $(T^0_i, X^0_i, A^0_i)_{i=1}^\infty$
of random elements with values in
$ (0,T] \times [0,\wh \eta] \times E^\Delta$, and that
\[
\bP((T^0_i,A^0_i, X^0_i)\in dt\times da \times dx)= \frac{1}{T \wh \eta} \1_{(0,T]\times [0,\wh \eta]}(t,a) Q_1(t,dx) dt da.
\]

Then, we consider a sequence $(S^0_j, Y^0_j)_{j=1}^\infty$ with $S^0_j\in (0,T]\cup \{\infty\}$ given as
\begin{align*}
S^0_j &:= \inf \set{ t\geq 0: N^0((0,t] \times E^\Delta)  \geq j}=\inf \set{ t\in [0,T]: N^0((0,t] \times E^\Delta)  \geq j},
\end{align*}
and with $Y^0_j$ constructed as follows:
\[
Y^0_j(\omega) :=\left\{
\begin{array}{ll}
X^0_i(\omega), & \hbox{for $i$ such that  $S^0_j(\omega)=T^0_i(\omega)<\infty$;} \\
\partial, & \hbox{if $S^0_j(\omega)=\infty$.}
\end{array}
\right.\]
The sequence $(S^0_j, Y^0_j)_{j=1}^\infty$ is well defined
because $N^0$ is a counting measure such that  $N^0(\set{t} \times E^\Delta)\leq 1$ for $t\geq 0$, so $N^0(\set{S^0_j} \times E^\Delta)= 1$,
provided ${S^0_j < \infty }$, and since $\bP( \exists i\neq k \ : \ T^{{0}}_i = T^{{0}}_k)=0$.
%Let $\widehat \partial$ be a point external to $\bR^d$.
Moreover, $Y^0_j$ is	 a random element. Indeed, %, then
	\begin{align*}
	 Y^0_j &= \beta^{-1}\Big(\sum_{i=1}^\infty \beta ( X^0_i)\1_\set{T^0_i=S^0_j}\1_\set{S^0_j<\infty} + \widehat \partial \1_\set{S^0_j=\infty}\Big)
	 \\
	 &=
	\beta^{-1}\Big(
	\1_\set{ S^0_j < \infty }
	\int_{E^\Delta} \beta(x) N^0(\set{S^0_j} \times dx)
	+\widehat \partial \1_\set{ S^0_j = \infty }\Big).
	\end{align*}

Observe that $S^0_j < S^0_{j+1}$ on $\set{S^{0}_j < \infty}$, and that the measure $N^0$ may be identified with the sequence $(S^0_j, Y^0_j)_{j=1}^\infty$. {Indeed,
defining   $\Psi^0:=\textrm{card} \{j\geq 1\, :\, S^0_j<\infty\}$, we have $\Psi^0\leq P^0$, and thus $\bP(\Psi^0<\infty)=1$. Consequently,
\begin{equation}\lab{N0-1}
N^0(dt,dx)
= \sum_{j = 1}^\infty \delta_{(S^0_j, Y^0_j)}(dt,dx) \1_\set{S^0_j <\infty}=\sum_{j = 1}^{\infty} \delta_{(S^0_j, Y^0_j)}(dt,dx)\1_\set{j \leq \Psi^0}.
%=\sum_{j = 1}^{\Psi^0} \delta_{(S^0_j, Y^0_j)}(dt,dx),
\end{equation}
The representation \eqref{N0-1} is more convenient for our needs than the representation \eqref{N0}. This is because the sequence $(S^0_j, Y^0_j)_{j=1}^\infty$ is ordered with respect to the first component, so that this sequence is a MPP  and thus measure $N^0$ may also be considered as a MPP.}

\medskip

  Step 3). Now, we proceed by recurrence. So,  for $k \in \bN$ suppose that we have constructed a random sequence $(S^k_j, Y^k_j)_{j=1}^\infty$  with the property that
$S^{k}_j \in (0,T] $ if $\set{S^{k}_j < \infty }$, and $\bP(\Psi^k < \infty) = 1$ where  $\Psi^k=\textrm{card} \{j\geq 1\, :\, S^k_j<\infty\}$, and that we have also constructed a random measure $N^k$ on $\mathcal{B}(\bR_+)\otimes \cE^\Delta$ satisfying
\[
N^k(dt,dx) = \sum_{j=1}^\infty \delta_{(S^k_j, Y^k_j)}(dt,dx) \1_\set{ S^k_j < \infty}= \sum_{j=1}^{\infty} \delta_{(S^k_j, Y^k_j)}(dt,dx)\1_\set{j \leq \Psi^k }.
\]
Given
$N^k$, or equivalently given $(S^k_j, Y^k_j)_{j=1}^{\infty}$,  we will define a sequence of random measures
$(N^{k+1}_j)_{j \geq 1} $, which are conditionally independent given {$\sigma(N^0, \ldots, N^k)$}.\footnote{Conditional independence between random measures is understood as conditional independence between random elements taking values in the space of probability measures. We refer to Kallenberg \cite{Kal2002}, Chapter 12, for definition of random elements taking values in the space of $\sigma$-finite measures on a measurable space, and to Chapter 6 therein for definition of conditional independence between random elements. }
Fix $j\in \set{1,2,\ldots}$. We let {the} random measure $N^{k+1}_j$ on $\mathcal{B}(\bR_+)\otimes \cE^\Delta$ be
defined by
\begin{equation}\label{eq:Nkp1j}
N^{k+1}_j (dt,dx) = \sum_{i =1}^{\infty } \delta_{(T^{k+1,j}_i, X^{k+1,j}_i)} (dt,dx) \1_\set{S^k_j < T, \, i \leq P^{k+1}_j, \, A^{k+1,j}_i \leq f(T^{k+1,j}_i, S^k_j, Y^k_j ,E^\Delta) },
\end{equation}
where $P^{k+1}_j$, $(T^{k+1,j}_i,A^{k+1,j}_i, X^{k+1,j}_i)_{i=1}^{\infty}$
are  random variables defined by transformation of the sequence
$Z^{k+1,j}$,$(U^{k+1,j}_i, V^{k+1,j}_i, W^{k+1,j}_i)_{i=1}^\infty$ and the pair $(S^k_j, Y^k_j)$  in the following way:
{
\begin{align}\label{eq:Pkp1j}
P^{k+1}_{j} &= D\big ((T - S^k_j) \wh f(S^k_j,Y^k_j) \1_\set{ S^k_j <T},Z^{k+1,j}\big )
\\
\nonumber
& = D\big ((T - S^k_j) \wh f(S^k_j,Y^k_j) ,Z^{k+1,j}\big ) \1_\set{ S^k_j <T}
\\
\nonumber
T^{k+1,j}_i &= (S^k_{j} + (T-S^k_j)U^{k+1,j}_i )\1_\set{ S^k_j <T} + \infty \1_\set{ S^k_j \geq T },
\\
\nonumber
 A^{k+1,j}_i &=\hat{f}(S^k_j, Y^k_j) V^{k+1,j}_i \1_\set{ S^k_j <T}, %+ \infty \1_\set{ S^k_j =\infty},
\\
\nonumber
X^{k+1,j}_i &= G_2(T^{k+1}_i, S^{k}_j, Y^{k}_j, W^{k+1,j}_i) \1_\set{ S^k_j <T} + \partial \1_\set{ S^k_j \geq T}.
\end{align}}}
Note that the random variable $P^{k+1}_j$ has
{$\sigma(N^0, \ldots , N^k)$-conditionally}
Poisson distribution with parameter $(T - S^k_j) \wh f(S^k_j,Y^k_j) \1_\set{ S^k_{j} <
	\infty } $, where $\wh f$ given by \eqref{eq:hatf}.
The  random elements in the sequence
$(T^{k+1,j}_i,A^{k+1,j}, X^{k+1,j}_i)_{i=1}^{\infty}$ take values in {$ (S^k_j,T] \times [0,\wh f(S^k_j,Y^k_j)] \times E^\Delta $ 	if $S^k_{j} < T$; otherwise, if $S^k_j \geq T$, these elements are all constant and equal to $(\infty,0,\partial)$.}
Moreover, they are $\sigma(N^0, \ldots, N^k)$-conditionally independent random elements, and
the   $\sigma(N^0, \ldots, N^k)$-conditional distribution of $(T^{k+1,j}_i,A^{k+1,j}_i, X^{k+1,j}_i)$ is given by
\begin{align}
\nonumber
&\bP((T^{k+1,j}_i,A^{k+1,j}, X^{k+1,j}_i)\in dt\times da \times dx | N^0, \ldots, N^k)\\
\label{eq:distkp1j}
&=	
\1_\set{ S^k_j < T}\frac{1}{(T - S^k_j)\wh f(S^k_j,Y^k_j) }\1_{(S^k_j,T]\times [0,\wh f(S^k_j,Y^k_j)]}(t,a) Q_2(t,S^k_j,Y^k_j,dx) dt da
\\ \nonumber
& \quad +\1_\set{ S^k_j \geq T } \delta_{(\infty,0,\partial)}(dt,da,dx).
\end{align}
Thus  if $S^k_j \geq T $, then $N^{k+1}_{j}  \equiv 0$.
The random measure $N^{k+1}_j$ can be identified with the random sequence $(S^{k+1,j}_n, Y^{k+1,j}_n)^\infty_{n=1}$, where
\begin{align*}
S^{k+1,j}_n &:= \inf \set{ t: N^{k+1}_j((0,t] \times E^\Delta)  \geq n}
\\
Y^{k+1,j}_n &:=
{
\beta^{-1}
\Big(
\1_\set{ S^{k+1,j}_n < T }
\int_{E^\Delta} \beta(x) N^{k+1}_{j}(\set{S^{k+1,j}_n} \times dx)
+ \widehat\partial \1_\set{ S^{k+1,j}_n \geq T } \Big).}
\end{align*}
Indeed, we have
\[
N^{k+1}_j (dt,dx) = \sum_{i =1}^{\infty } \delta_{(S^{k+1,j}_i, Y^{k+1,j}_i)} (dt,dx) \1_\set{ S^{k+1,j}_i < \infty}
= \sum_{i =1}^\infty \delta_{(S^{k+1,j}_i, Y^{k+1,j}_i)} (dt,dx) \1_\set{ i \leq {\Psi^{k+1}_j } },
\]
where $\Psi^{k+1}_j = \textrm{card} \set{ i : S^{k+1,j}_i < \infty} $ is such that $\Psi^{k+1,j}_i \leq P^{k+1}_j <  \infty $ with probability 1.
Therefore, {since the sequence $(S^{k+1,j}_n)_{n=1}^\infty$ is increasing as long as $S^{k+1,j}_n<\infty$, the measure} $N^{k+1}_j$ is a  MPP.
{Moreover,  if $S^k_j < T$, then $T^{k+1,j}_{l}  \in (S^k_j, T]$ for every $l$. Hence using the definition of $S^{k+1,j}_i$ we conclude  that $S^{k+1,j}_i \in (S^{k}_j,T]$, when $S^{k+1,j}_i$ is finite.}

Next,  we define the random measure  $N^{k+1}$ on $\mathcal{B}(\bR_+)\otimes \cE^\Delta$:
\begin{equation}\label{eq:Nkp1}
N^{k+1}(dt,dx) := \sum_{j \geq 1} N^{k+1}_j(dt,dx) =\sum_{j \geq 1} N^{k+1}_j(dt,dx)	\1_\set{{S^{k}_j < T }},
\end{equation}
Similarly as above we observe that the random measure  $N^{k+1}$ can be identified with the random sequence
	$(S^{k+1}_n, Y^{k+1}_n)_{n=1}^\infty$, where
	\begin{align*}
	S^{k+1}_n &:= \inf \set{ t: N^{k+1}((0,t] \times E^\Delta)  \geq n},
	\\
	Y^{k+1}_n &:=
	{\beta^{-1}\Big(
	\1_\set{ S^{k+1}_n < \infty }
	\int_{E^\Delta} \beta(x) N^{k+1}(\set{S^{k+1}_n} \times dx)
	+ \widehat \partial \1_\set{ S^{k+1}_n = \infty } \Big).}
	\end{align*}
	{Indeed, we have
\[
	\Psi^{k+1} := \textrm{card} \set{ i : S^{k+1}_i < \infty }
	  = \sum_{j=1}^{\Psi^k} \Psi^{k+1}_j,
	\]
and so $\bP(\Psi^{k+1} < \infty) = 1$. {
Moreover, we observe that  $N^{k+1}(\set{S^{k+1}_n} \times E^\Delta) = 1$, provided that $ S^{k+1}_n < \infty$ and hence
$Y^{k+1}_n(\omega) = Y^{k+1,j}_i(\omega)$ for $i$ such that  $S^{k+1}_n(\omega)=S^{k+1,j}_i(\omega)<\infty$.}
Thus,
	\[
	N^{k+1}(dt,dx)
	=
	\sum_{i =1}^{\infty } \delta_{(S^{k+1}_i, Y^{k+1}_i)} (dt,dx) \1_\set{ S^{k+1}_i < \infty}
	= \sum_{i =1}^{\infty } \delta_{(S^{k+1}_i, Y^{k+1}_i)} (dt,dx)\1_\set{i \leq \Psi^{k+1}}.
	\]
Since the sequence $\left(S^{k+1}_n, Y^{k+1}_n\right)_{n=1}^\infty$ forms a MPP, then $N^{k+1}$ may be considered as a MPP.}

{Recall that if $S^k_j < T$, then $S^{k+1,j}_{l}  \in (S^k_j, T]$ for every $l$, which implies that $S^{k+1}_i \in [0,T]$ if $\set{S^{k+1}_i < \infty}$.}

\medskip
	Step 4).
	Define a sequence of random measures  $H^{k}$, $k \geq 1$, on $\mathcal{B}(\bR_+)\otimes \cE^\Delta$ in terms of the previously constructed marked point processes $(N^j)_{j \geq 0} $ by
\begin{equation}\label{eq:Hk}
		H^{k}(dt,dx) = \sum_{m=0}^{k} N^{m}(dt,dx) ,\quad k \geq 1 .
\end{equation}	
A marked point process, say $(S_n^{H^k},Y_n^{H^k})_{n=1}^\infty$, can be associated with $H^k$ in a way analogous to how the sequence $\left(S^{k+1}_n, Y^{k+1}_n\right)_{n=1}^\infty$ has been associated with $N^{k+1}$. Consequently, ${H^k}$ may be considered as an MPP.

\medskip
	Step 5). Repeat Step $3$ and Step $4$ infinitely many times to obtain
		limiting random measure $H^{\infty}$ on $\mathcal{B}(\bR_+)\otimes \cE^\Delta$ given by
		\begin{equation}\label{Hinf}
H^\infty(dt,dx) = \sum_{m=0}^\infty N^m(dt,dx).
		\end{equation}

\begin{remark}
It is important to note that all random measures introduced in the construction above do not charge any set $F \in \mathcal{B}(\bR_+)\otimes \cE^\Delta$ such that $F \subset  (T,\infty]\times E^\Delta$. So, for example, for any such set we have $H^\infty(F)=0$.
\end{remark}

\subsection{Justification of the construction}
	
	Now we will justify that the construction given in Steps 1--5 above delivers a generalized multivariate Hawkes process with the Hawkes kernel given in \eqref{new-nu}.
	Towards this end let us introduce the following filtrations:
	\begin{align*}
	\bH^k &=\set{\cH^k_t}_{t \in [0,\infty)}
	,
	&&
	\text{ where } &   &\cH^k_t := \cF^{N^0}_t \vee \ldots \vee \cF^{N^k}_t ,
	\\
	\bH^\infty &= \set{\cH^\infty_t}_{t \in [0,\infty)}
	,&&	
	\text{ where } &&\cH^\infty_t := \bigvee_{k \geq 0} \cF^{N^k}_t,
	\\
	\widehat{\mathbb{H}}^{k+1} &= \set{\wh{\cH}^{k+1}_t}_{t \in [0,\infty)},
		&&
	\text{ where } &&\wh{\mathcal{H}}^{k+1}_t := \cH^k_\infty \vee \cF^{N^{k+1}}_t.
	\end{align*}
	%Our aim is to show that the limiting random measure $H^{\infty}$ is  a $\bH^\infty$-Hawkes process. Towards this end we will compute  $\bH^\infty$-compensator of $H$
	
Our first aim is to compute  $\bH^\infty$-compensator of the limiting random measure $H^{\infty}$ given in \eqref{Hinf}. We begin with following key result,

\begin{proposition}\label{prop:fdsmpp}
i) The marked point process $N^0$ is an  $\bH^0$-doubly stochastic marked Poisson process.
	The random measure $\nu^0$ given by
	\begin{equation}\label{eq:nu0-intensity}
	\nu^{0}((s,t] \times D) = \int_s^t \eta(v,D) dv,\qquad 0\leq s\leq t,\ D\in \cE^\Delta,
	\end{equation} is the $\cH^0_0$-intensity kernel of $N^0$.  Moreover,  $\nu^0$ is the $\bH^0$-compensator of $N^0$.

	\noindent ii) For each j the marked point process $N^{k+1}_j$ is an  $\wh{\mathbb{H}}^{k+1}$-doubly stochastic marked Poisson process.
The random measure $\nu^{k+1}_j$ given by 	
\begin{equation}\label{eq:nu-intensity}
\nu^{k+1}_j((s,t] \times D) = \int_s^t f(v, S^k_j, Y^k_j, D) \1_{{(} S^k_j, \infty )} (v) dv,\qquad 0\leq s\leq t,\ D\in \cE^\Delta,
\end{equation}
 is the $\wh{\cH}^{k+1}_0$-intensity kernel of $N^{k+1}_j$. Moreover,  $\nu^0$ is the $\wh{\mathbb{H}}^{k+1}$-compensator of $N^{k+1}_j$.
\end{proposition}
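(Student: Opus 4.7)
The strategy is to recognize each $N^0$ and $N^{k+1}_j$ as the projection of a homogeneous marked Poisson process on an augmented space onto which a fitness coordinate has been added, and then apply the standard thinning theorem for Poisson random measures (see e.g.\ Last--Brandt \cite{LasBra1995}, Chapter 3) to recover the stated intensity kernels. Throughout, the independence assumptions on the array $\{(Z^{k,j},(U^{k,j}_i,V^{k,j}_i,W^{k,j}_i)_{i=1}^\infty)\}$ play the central role.

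For part (i), I would first observe that the random measure
$\wt N^0(dt,da,dx) := \sum_{i=1}^{P^0}\delta_{(T^0_i,A^0_i,X^0_i)}(dt,da,dx)$
is a Poisson random measure on $(0,T]\times[0,\wh\eta]\times E^\Delta$ with deterministic intensity $Q_1(t,dx)\,dt\,da$. This is immediate from the facts that $P^0=D(T\wh\eta,Z^{0,1})\sim\mathrm{Poisson}(T\wh\eta)$, that conditional on $P^0$ the triples $(T^0_i,A^0_i,X^0_i)$ are i.i.d.\ with law $\tfrac1{T\wh\eta}\mathbf{1}_{(0,T]\times[0,\wh\eta]}(t,a)Q_1(t,dx)\,dt\,da$, and that $Z^{0,1}$ is independent of $(U^{0,1}_i,V^{0,1}_i,W^{0,1}_i)_{i\ge1}$. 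Since $N^0(dt,dx)$ is obtained from $\wt N^0$ by retaining only those atoms with $a\le\eta(t,E^\Delta)$ and projecting away the $a$-coordinate, the thinning/projection theorem yields that $N^0$ is a Poisson random measure on $(0,T]\times E^\Delta$ with deterministic intensity
\[
\Bigl(\int_0^{\wh\eta}\mathbf{1}_{\{a\le\eta(t,E^\Delta)\}}\,da\Bigr) Q_1(t,dx)\,dt=\eta(t,E^\Delta)Q_1(t,dx)\,dt=\eta(t,dx)\,dt.
\]
Because the intensity kernel is deterministic and $\cH^0_0$ is trivial, $N^0$ is trivially an $\bH^0$-doubly stochastic marked Poisson process, and $\nu^0$ of \eqref{eq:nu0-intensity} is both its $\cH^0_0$-intensity kernel and its $\bH^0$-compensator (the latter by the well-known fact that the $\bF^N$-compensator of a doubly stochastic Poisson random measure equals its $\cF_0$-intensity kernel).

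For part (ii), the argument is the same one performed conditionally on $\cH^k_\infty$. The pair $(S^k_j,Y^k_j)$ is $\cH^k_\infty$-measurable, while by construction the block $(Z^{k+1,j},(U^{k+1,j}_i,V^{k+1,j}_i,W^{k+1,j}_i)_{i\ge1})$ is independent of the earlier arrays $\{(Z^{m,\ell},\ldots):m\le k\}$ and hence of $\cH^k_\infty$. Therefore, conditionally on $\wh{\cH}^{k+1}_0=\cH^k_\infty$, and on the event $\{S^k_j<T\}$, the analog of $\wt N^0$ built from \eqref{eq:Pkp1j}--\eqref{eq:distkp1j} is a Poisson random measure on $(S^k_j,T]\times[0,\wh f(S^k_j,Y^k_j)]\times E^\Delta$ with (now $\wh{\cH}^{k+1}_0$-measurable) intensity $Q_2(t,S^k_j,Y^k_j,dx)\,dt\,da$; thinning with the indicator $\mathbf{1}_{\{a\le f(t,S^k_j,Y^k_j,E^\Delta)\}}$ gives, after projection, intensity $f(t,S^k_j,Y^k_j,dx)\mathbf{1}_{(S^k_j,T]}(t)\,dt$, which is exactly $\nu^{k+1}_j$ in \eqref{eq:nu-intensity}. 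On the event $\{S^k_j\ge T\}$ all the relevant points are pushed to $(\infty,0,\partial)$ by \eqref{eq:Pkp1j}, hence $N^{k+1}_j\equiv0$ there, which is also consistent with \eqref{eq:nu-intensity} being zero on $\{S^k_j\ge T\}$. This identifies $N^{k+1}_j$ as an $\wh{\mathbb H}^{k+1}$-doubly stochastic marked Poisson process with $\wh{\cH}^{k+1}_0$-intensity kernel $\nu^{k+1}_j$, and as before the $\wh{\mathbb H}^{k+1}$-compensator coincides with this intensity kernel.

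The only delicate point I anticipate is the bookkeeping of $\sigma$-fields, specifically verifying that the auxiliary variables used to build $N^{k+1}_j$ are genuinely independent of $\wh{\cH}^{k+1}_0=\cH^k_\infty$ rather than only of $\cH^k_0$; this is true because all $(Z^{m,\ell},U^{m,\ell}_i,V^{m,\ell}_i,W^{m,\ell}_i)$ with $m\le k$ (which generate $\cH^k_\infty$) are independent of those with $m=k+1$ by hypothesis, and $\cH^k_\infty$ is a subfield of the $\sigma$-field generated by the former. Once this independence is recorded, the two statements reduce to a direct application of the Poisson thinning theorem, and the compensator claims follow from the standard link between the intensity kernel of a doubly stochastic marked Poisson process and its predictable compensator.
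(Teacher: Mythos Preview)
Your argument for part~(i) is correct and is essentially the paper's approach in condensed form: the paper packages the thinning step into its Lemma~\ref{lem:Gcprm} and then applies Proposition~\ref{prop:fdsmpp2}(i), whereas you invoke the standard Poisson thinning theorem directly. Since $\bH^0=\bF^{N^0}$ and the intensity is deterministic, the passage from ``Poisson random measure'' to ``$\bH^0$-doubly stochastic'' is indeed automatic.

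Part~(ii), however, has a genuine gap. You establish that $N^{k+1}_j$ is a $\cH^k_\infty$-conditional Poisson random measure with intensity $\nu^{k+1}_j$, and then assert that this ``identifies $N^{k+1}_j$ as an $\wh{\bH}^{k+1}$-doubly stochastic marked Poisson process.'' But recall that $\wh{\cH}^{k+1}_s=\cH^k_\infty\vee\cF^{N^{k+1}}_s$, and $N^{k+1}=\sum_{j'}N^{k+1}_{j'}$: the filtration at time $s>0$ contains information about \emph{all} of the $N^{k+1}_{j'}$, not just the fixed $N^{k+1}_j$. The doubly stochastic property requires
\[
\bP\bigl(N^{k+1}_j((s,t]\times D)=k \,\big|\, \wh{\cH}^{k+1}_s\bigr)
= e^{-\nu^{k+1}_j((s,t]\times D)}\frac{(\nu^{k+1}_j((s,t]\times D))^k}{k!},
\]
and knowing merely that $N^{k+1}_j$ is $\wh{\cH}^{k+1}_0$-conditionally Poisson does not give this, because the extra information in $\cF^{N^{k+1}}_s$ coming from the siblings $N^{k+1}_{j'}$, $j'\neq j$, could in principle be informative about $N^{k+1}_j$. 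What is missing is the verification that the family $(N^{k+1}_{j'})_{j'\ge1}$ is $\cH^k_\infty$-conditionally independent; this is exactly what the paper checks (using that the blocks $I_{j'}=(Z^{k+1,j'},(U^{k+1,j'}_i,V^{k+1,j'}_i,W^{k+1,j'}_i)_i)$ are mutually independent and independent of $\cH^k_\infty$) before invoking Proposition~\ref{prop:fdsmpp2}(ii). Your ``delicate point'' paragraph addresses independence of the $(k{+}1,j)$-block from $\cH^k_\infty$, which is necessary but not sufficient: you must also record independence of that block from the other $(k{+}1,j')$-blocks, and then use a result like Proposition~\ref{prop:fdsmpp2}(ii) to pass from the family of conditional Poisson measures to the $\wh{\bH}^{k+1}$-doubly stochastic property for each member.
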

\begin{proof}
	i). Note that from Lemma  \ref{lem:Gcprm}, by taking
	\[
	\cG = \cH^0_0, \quad \cY = \set{1} , \quad Y = 1 , \quad \ell (y) = 0,  \quad g = \eta,
	\]
	it follows that $N^0$ is  $\cH^0_0$-conditional Poisson random measure with intensity measure $\nu^0$ given by \eqref{eq:nu0-intensity}. %Since $\cH^0_0$ is a trivial sigma field it is .
	Now, the  assertion follows from the point  i) of Proposition \ref{prop:fdsmpp2}.
	%since, in  view of Proposition \ref{lem:cprm}, $N^0$ is a $\cH^0_0$-conditional Poisson random measure.

	\medskip
	\noindent ii).  We first note that from Lemma \ref{lem:Gcprm}, by taking
	\[
	\cG = \cH^k_\infty, \quad \cY = [0,T] \times E^\Delta , \quad Y = (S^k_j, Y^k_j)  , \quad \ell (s,y) = s,  \quad g = f,
	\]
	it follows that $N^{k+1}_j$ defined by \eqref{eq:Nkp1j} is $\cH^k_\infty$-conditionally Poisson random measure with intensity measure $\nu^{k+1}_j$ given by \eqref{eq:nu-intensity}.

	To complete the proof, in view of assertion ii) of Proposition \ref{prop:fdsmpp2}, it suffices show that the marked point processes $(N^{k+1}_j)_{j \geq 1}$ are  $\wh{\mathcal{H}}^{k+1}_0$-conditionally independent.
	Since $\wh{\cH}^{k+1}_0=\cH^k_\infty = \sigma(N^0, \ldots, N^k)$ it suffices to verify that $(N^{k+1}_j)_{j \geq 1} $ are conditionally independent given $\sigma(N^0, \ldots, N^k)$.  For this we first note that for each $j$ the random measure   $N^{k+1}_j$ is defined by \eqref{eq:Nkp1j}, so it is constructed from the pair $(S^k_j, Y^k_j)$,  which is $ \sigma(N^0, \ldots, N^k) $-measurable and from  the family

\[I_j := (Z^{k+1,j},(U^{k+1,j}_i, V^{k+1,j}_i, W^{k+1,j}_i)_{i=1}^\infty).\]

Now, using the fact that $I_1, I_2, \ldots $ are independent between themselves and also independent from $ \sigma(N^0, \ldots, N^k) $, we conclude that $N^{k+1}_1,N^{k+1}_2, \ldots$ are $(N^0, \ldots, N^k)$-conditionally independent.
	So we see that  $N^{k+1}_{j}$ is a $\wh{\cH}^{k+1}_0$-conditional Poisson random measure for any $j \geq 1$, and that $(N^{k+1}_{j})_{j \geq 1}$ are $\wh{\cH}^{k+1}_0$-conditionally independent random measures.  Thus we may use
	Proposition \ref{prop:fdsmpp2} to conclude that $N^{k+1}_j$ is an  $\wh{\mathbb{H}}^{k+1}$-doubly stochastic marked Poisson process whose  $\wh{\cH}^{k+1}_0$-intensity kernel is   $\nu^{k+1}_j$ given by \eqref{eq:nu-intensity}
	\end{proof}

From Proposition \ref{prop:fdsmpp} and from its proof
we conclude  that
the random measure $N^{k+1}$ given by
 \eqref{eq:Nkp1}
is a sum of  $\cH^k_\infty$-conditionally independent $\wh{\bH}^{k+1}$-doubly stochastic marked Poisson processes. We will prove now that  $N^{k+1}$ is also an $\wh{\bH}^{k+1}$-doubly stochastic marked Poisson process whose intensity kernel is simply the sum of intensity kernels of $N^{k+1}_j$, $j \geq 0$.
\begin{proposition}\label{prop:nukp1}
	The marked point process $N^{k+1}$ is an $\wh{\bH}^{k+1}$-doubly stochastic marked Poisson process
		with intensity kernel $\nu^{k+1}$  given by
	\begin{equation}\label{eq:nu-intensity-k+1}
	\nu^{k+1}((s,t] \times D) = \sum_{j =1}^\infty \nu^{k+1}_j ((s,t] \times D)= \int_s^t \int_{(0,v) \times E^\Delta} f(v, u, y, D) N^k(du,dy) dv,
	\end{equation}
	for $0 \leq s \leq t$, $D \in \cE^\Delta$.
	Moreover, the intensity kernel $\nu^{k+1}$ of  $N^{k+1}$ is  the $\mathbb{H}^{k+1}$-compensator of  $N^{k+1}$.
\end{proposition}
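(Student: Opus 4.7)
The plan has three stages. First, we establish that $N^{k+1}$ is a $\wh{\bH}^{k+1}$-doubly stochastic marked Poisson process with $\wh{\bH}^{k+1}$-compensator equal to $\sum_{j\geq 1}\nu^{k+1}_j$. Second, we identify that sum with the explicit integral expression in \eqref{eq:nu-intensity-k+1}. Third, we upgrade the $\wh{\bH}^{k+1}$-compensator property to a $\bH^{k+1}$-compensator property.

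For the first stage, recall from Proposition \ref{prop:fdsmpp} that each $N^{k+1}_j$ is a $\wh{\cH}^{k+1}_0$-conditional Poisson random measure with intensity kernel $\nu^{k+1}_j$, and that the family $(N^{k+1}_j)_{j\geq 1}$ is $\wh{\cH}^{k+1}_0$-conditionally independent. A standard Laplace-functional computation (the conditional Laplace functional of a sum of conditionally independent conditional Poisson random measures factors as the product of their Laplace functionals, which equals the exponential of the sum of their intensity kernels) shows that $N^{k+1}=\sum_{j\geq 1}N^{k+1}_j$ is a $\wh{\cH}^{k+1}_0$-conditional Poisson random measure with intensity kernel $\sum_{j\geq 1}\nu^{k+1}_j$. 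Because $\bP(\Psi^k<\infty)=1$, only finitely many summands are nonzero almost surely, so convergence is not an issue. Proposition \ref{prop:fdsmpp2} then delivers that $N^{k+1}$ is $\wh{\bH}^{k+1}$-doubly stochastic marked Poisson, with that sum as its $\wh{\bH}^{k+1}$-compensator.

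For the second stage, use the representation $N^k(du,dy)=\sum_{j\geq 1}\delta_{(S^k_j,Y^k_j)}(du,dy)\1_\set{S^k_j<\infty}$ from Step 3 of the construction, together with the standing condition $f(v,u,y,\cdot)=0$ for $u\geq v$. Tonelli's theorem and the identity $\1_{(S^k_j,\infty)}(v)=\1_\set{S^k_j<v}$ rewrite $\sum_j\nu^{k+1}_j((s,t]\times D)$ in the form $\int_s^t\int_{(0,v)\times E^\Delta}f(v,u,y,D)\,N^k(du,dy)\,dv$, which is precisely \eqref{eq:nu-intensity-k+1}.

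For the third stage, observe that $\nu^{k+1}((0,t]\times D)$ is continuous in $t$ and is a measurable functional of $N^k$, which is $\bH^{k+1}$-adapted; hence $\nu^{k+1}$ is $\bH^{k+1}$-predictable. Since $\cH^{k+1}_t=\cF^{N^0}_t\vee\cdots\vee\cF^{N^{k+1}}_t\subset\cH^k_\infty\vee\cF^{N^{k+1}}_t=\wh{\cH}^{k+1}_t$, and the process $M_t:=N^{k+1}((0,t]\times D)-\nu^{k+1}((0,t]\times D)$ is both $\bH^{k+1}$-adapted and a $\wh{\bH}^{k+1}$-martingale (by the first stage), the tower property of conditional expectation shows at once that $M$ is also an $\bH^{k+1}$-martingale. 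By uniqueness of the predictable compensator, $\nu^{k+1}$ is the $\bH^{k+1}$-compensator of $N^{k+1}$. The main point requiring care is the Laplace-functional identity in the first stage, but the a.s.\ finiteness of $\Psi^k$ reduces this to a finite-product computation and makes it routine.
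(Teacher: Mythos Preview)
Your overall strategy is sound and reaches the right conclusion, but it differs from the paper's route in stage~1 and has a small unjustified step in stage~3.

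\textbf{Stage 1 (different approach).} You argue probabilistically: the superposition of the $\wh{\cH}^{k+1}_0$-conditionally independent conditional Poisson measures $N^{k+1}_j$ is again conditional Poisson, and then Proposition~\ref{prop:fdsmpp2}(i) with $\cG=\cH^k_\infty$ gives the doubly stochastic property in $\wh{\bH}^{k+1}$. The paper instead works measure-theoretically: it starts from the $\wh{\bH}^{k+1}$-compensator identity for each $N^{k+1}_j$, sums over $j$ using Lemma~\ref{lem:NCT}, and only then invokes Theorem~6.1.4 of \cite{LasBra1995} to recover the doubly stochastic property. Your route is shorter and more intuitive (and the a.s.\ finiteness of $\Psi^k$ indeed makes the Laplace-functional step trivial); the paper's route avoids any appeal to the superposition fact and keeps everything at the level of compensators.

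\textbf{Stage 3 (minor gap).} You claim that $M_t=N^{k+1}((0,t]\times D)-\nu^{k+1}((0,t]\times D)$ is a $\wh{\bH}^{k+1}$-\emph{martingale} and then project via the tower property. But the compensator property only gives a local martingale in general, and the tower-property argument requires $\bE|M_t|<\infty$, i.e.\ $\bE[\nu^{k+1}((0,t]\times D)]<\infty$, which you have not verified (and which need not hold under the standing assumptions, since $\wh f$ is merely finite, not bounded). The paper sidesteps this entirely: since $\bH^{k+1}\subset\wh{\bH}^{k+1}$, every nonnegative $\bH^{k+1}$-predictable $F$ is $\wh{\bH}^{k+1}$-predictable, so the compensator identity $\bE\int F\,dN^{k+1}=\bE\int F\,d\nu^{k+1}$ carries over immediately; combined with the $\bH^{k+1}$-predictability of $\nu^{k+1}$ (which you correctly note), this yields the $\bH^{k+1}$-compensator property without any integrability check. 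Replacing your martingale/tower step by this argument closes the gap.
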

\begin{proof}
	To prove the first assertion,
	in view of Proposition 6.1.4 in \cite{LasBra1995}, it suffices to show that $\nu^{k+1}$ is the $\wh{\bH}^{k+1}$-compensator of $N^{k+1}$. Indeed this compensating property implies  that
	\[
	\bE( N^{k+1}((s,t] \times B) | \wh{\cH}^{k+1}_0 ) = \bE(  \nu^{k+1}((s,t] \times B) | \wh{\cH}^{k+1}_0 ) = \nu^{k+1}((s,t] \times B),
	\]
	where the last equality follows from $\wh{\cH}^{k+1}_0$-measurability of $\nu^{k+1}$. So, if $\nu^{k+1}$ is $\wh{\bH}^{k+1}$-compensator of $N^{k+1}$ then it is $\wh{\cH}^{k+1}_0$-intensity kernel and, thus,
	Theorem 6.1.4 in \cite{LasBra1995} implies the first assertion. Therefore it remains to show that $\nu^{k+1}$ is $\wh{\bH}^{k+1}$-compensator of $N^{k+1}$.
	
Towards this end we first note that from Proposition \ref{prop:fdsmpp} it follows that $N^{k+1}_j$ is an $\wh{\bH}^{k+1}$-doubly stochastic marked Poisson process with $\wh{\bH}^{k+1}$-compensator  $\nu^{k+1}_j$ given by \eqref{eq:nu-intensity}.
	So, for an arbitrary non-negative  $\wh{\mathbb{H}}^{k+1}$-predictable function $F:\Omega \times [0,T] \times {E}^\Delta \rightarrow \bR$ it holds
	\begin{equation}\label{eq:CN1}
	\bE \Big( \int_0^\infty \int_{E^\Delta} F(u,y) N^{k+1}_j(du,dy) \Big)
	=\bE \Big( \int_0^\infty \int_{E^\Delta} F(u,y) \nu^{k+1}_j(du,dy) \Big), \qquad j \in \bN.
	\end{equation}	
	This implies that
	\begin{align*}%\label{eq:CN2}
	\lim_{m\rightarrow \infty} \bE \Big( \int_0^\infty \int_{E^\Delta} F(u,y) \Big(\!\sum_{j=1}^m N^{k+1}_j\!\Big)(du,dy) \Big)
	=\lim_{m\rightarrow \infty}\bE \Big( \int_0^\infty \int_{E^\Delta} F(u,y) \Big( \sum_{j=1}^m \nu^{k+1}_j\Big)(du,dy) \Big).
	\end{align*}	
	Since, for every $A \in \cB([0, \infty)) \otimes \cE^\Delta$,
	\[
	{N}^{k+1}(\omega, A) = \lim_{m \uparrow \infty} \sum_{j=1}^m {N}^{k+1}_{j}(\omega, A), \quad  \text{ and } \quad
	{\nu}^{k+1}(\omega, A)  = \lim_{m \uparrow \infty} \sum_{j=1}^m {\nu}^{k+1}_{j}(\omega, A)  \]
	 almost surely,
	using Lemma \ref{lem:NCT} for
	\[
		\mu_j( dt, dy, d \omega) = N^{k+1}_j(\omega, dt,dy)  \bP(d\omega),\quad
		\mu( dt, dy, d \omega) = N^{k+1}(\omega, dt,dy)  \bP(d\omega)
	\]
	and once again for
	\[
\overline{\mu}_j( dt, dy, d \omega) = \nu^{k+1}_j(\omega, dt,dy)  \bP(d\omega),\quad
\overline{\mu}( dt, dy, d \omega) = \nu^{k+1}(\omega, dt,dy)  \bP(d\omega)
\]
we see that
	\begin{align*}%\label{eq:CN2}
	\bE \Big( \int_0^\infty \int_{E^\Delta} F(u,y) N^{k+1}(du,dy) \Big)
	=		\bE \Big( \int_0^\infty \int_{E^\Delta} F(u,y)  \nu^{k+1}(du,dy) \Big).
	\end{align*}
	Now, since	for any $ 0 \leq s \leq t$, $D \in \cE^{\Delta}$
	\begin{align*}
		\nu^{k+1}((s,t] \times D) &=
		\sum_{j=1}^\infty \nu^{k+1}_j((s,t] \times D)
		=\sum_{j=1}^\infty\int_s^t f(v, S^k_j, Y^k_j, D) \1_{( S^k_j, \infty )}(v) dv
		\\
		&=\int_s^t \sum_{j=1}^\infty f(v, S^k_j, Y^k_j, D) \1_{( S^k_j, \infty )}(v) dv,
	\end{align*}		
	and since
	\[
	\int_{(0,v) \times E^\Delta} f(v, u, y, D) N^{k}(du,dy)
	=\sum_{j=1}^\infty f(v, S^k_j, Y^k_j, D) \1_{( S^k_j, \infty )}(v) ,
	\]
we obtain that \eqref{eq:nu-intensity-k+1} holds. This concludes the proof of the first assertion.

Now we will prove that  the {$\wh{\cH}^{k+1}_0$-}intensity kernel $\nu^{k+1}$ of  $N^{k+1}$ is  the $\mathbb{H}^{k+1}$-compensator of  $N^{k+1}$. For this, we first observe that from Theorem 6.1.4 in \cite{LasBra1995} it follows that the intensity kernel of  $N^{k+1}$ is the $\wh{\mathbb{H}}^{k+1}$-compensator of  $N^{k+1}$.
So, for an arbitrary non-negative  $\wh{\mathbb{H}}^{k+1}$-predictable function  $F:\Omega \times [0,T] \times {E}^\Delta \rightarrow \bR$ it holds
\begin{equation}\label{eq:CN}
\bE \Big( \int_0^\infty \int_{E^\Delta} F(u,y) N^{k+1}(du,dy) \Big)
=\bE \Big( \int_0^\infty \int_{E^\Delta} F(u,y) \nu^{k+1}(du,dy) \Big).
\end{equation}	
Since $\wh{\mathbb{H}}^{k+1} \supset \mathbb{H}^{k+1}$, the $\mathbb{H}^{k+1}$-predictable
functions  are also $\wh{\mathbb{H}}^{k+1}$-predictable. So \eqref{eq:CN} holds for an arbitrary non-negative $\mathbb{H}^{k+1}$-predictable function $F$.
From \eqref{eq:nu-intensity-k+1} we see that for an arbitrary $D$ the process $(\nu^{k+1}((0,t] \times D))_{t\in [0,T]}$ is $\bH^{k}$-adapted and continuous. Hence it is  $\bH^{k}$-predictable and thus also $\bH^{k+1}$-predictable (since $\bH^{k+1} \supset \bH^{k}$). So, $\nu^{k+1}$ is an $\bH^{k+1}$-predictable random measure such that \eqref{eq:CN} holds for arbitrary non-negative $\mathbb{H}^{k+1}$-predictable function $F$. This means that $\nu^{k+1}$ is the  $\mathbb{H}^{k+1}$-compensator of $N^{k+1}$.
\end{proof}

In order to proceed we will need the following auxiliary result.
\begin{lemma}\label{lem:immersion}
	Let $\bF$ and $\bG$ be  filtrations in $(\Omega,\cF,\bP)$. Then	
%	 following conditions are equivalent
%	\begin{enumerate}
%		\item
$\bF$ is $\bP$-immersed in $\bF \vee \bG$ if and only if
%		\item
%For every $t \geq 0$ and  every $\xi$ bounded $\cF_\infty$-measurable random variable we have
%			\[\bE(\xi| \cF_t \vee \cG_t) =\bE(\xi| \cF_t ). \]
%		\item
for every $t \geq 0$  and every bounded $\cG_t$-measurable random variable $\eta$ we have
			\begin{equation}\label{eq:etka}
			\bE( \eta | \cF_t) = \bE( \eta | \cF_\infty) .
			\end{equation}
%	\end{enumerate}
\end{lemma}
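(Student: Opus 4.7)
I would rely on the classical Br\'emaud–Yor characterization of immersion: if $\bH=\bF\vee\bG$ denotes the enlarged filtration, then $\bF$ is $\bP$-immersed in $\bH$ if and only if
\[
\bE(Y\mid\cH_t)=\bE(Y\mid\cF_t)\qquad \forall\,t\geq0
\]
for every bounded $\cF_\infty$-measurable $Y$; this in turn is equivalent to the conditional independence $\cF_\infty\perp\!\!\!\perp\cH_t\mid\cF_t$. I would invoke this as a known result and then show the two directions.

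\smallskip
\noindent\emph{($\Rightarrow$).} Assume immersion. Fix $t\geq 0$ and a bounded $\cG_t$-measurable $\eta$. Since $\eta$ is $\cH_t$-measurable, conditional independence of $\cF_\infty$ and $\cH_t$ given $\cF_t$, together with $\cF_t\subset\cF_\infty$, gives
\[
\bE(\eta\mid\cF_\infty)=\bE(\eta\mid\cF_\infty\vee\cF_t)=\bE(\eta\mid\cF_t),
\]
which is \eqref{eq:etka}.

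\smallskip
\noindent\emph{($\Leftarrow$).} Assume \eqref{eq:etka} holds for every bounded $\cG_t$-measurable $\eta$. To conclude immersion it suffices, by the characterization above, to show that
\[
\bE(YZ)=\bE\!\bigl(\bE(Y\mid\cF_t)\,Z\bigr)
\]
for every bounded $\cF_\infty$-measurable $Y$ and every bounded $\cH_t$-measurable $Z$. A monotone class argument reduces the check to products $Z=\eta_1\eta_2$ with $\eta_1$ bounded $\cF_t$-measurable and $\eta_2$ bounded $\cG_t$-measurable, since such products generate $\cH_t=\cF_t\vee\cG_t$. For such $Z$ I would compute both sides:
\[
\bE(Y\eta_1\eta_2)=\bE\!\bigl(Y\eta_1\,\bE(\eta_2\mid\cF_\infty)\bigr)=\bE\!\bigl(Y\eta_1\,\bE(\eta_2\mid\cF_t)\bigr)=\bE\!\bigl(\bE(Y\mid\cF_t)\,\eta_1\,\bE(\eta_2\mid\cF_t)\bigr),
\]
where the middle equality uses the hypothesis \eqref{eq:etka} and the last uses that $\eta_1\,\bE(\eta_2\mid\cF_t)$ is $\cF_t$-measurable. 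The same chain of identities, started from $\bE(\bE(Y\mid\cF_t)\eta_1\eta_2)$, produces the same final expression, so the two sides coincide.

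\smallskip
\noindent\textbf{Main obstacle.} The only subtle point is the reduction to product-form $Z$ via a monotone class argument: one must verify that the class of bounded $\cH_t$-measurable random variables for which $\bE(YZ)=\bE(\bE(Y\mid\cF_t)Z)$ holds is a $\lambda$-system containing the multiplicative $\pi$-system $\{\eta_1\eta_2\}$ generating $\cH_t$. Beyond this routine step the calculation is symmetric and short.
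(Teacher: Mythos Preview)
Your proof is correct and follows essentially the same route as the paper. Both directions rely on the same external characterization of immersion (the paper cites Proposition 5.9.1.1 in Jeanblanc--Yor--Chesney, you cite Br\'emaud--Yor), and for the sufficiency direction both reduce via a monotone class/$\pi$--$\lambda$ argument to products $\eta_1\eta_2$ (the paper uses indicators $\1_B\1_C$ with $B\in\cF_t$, $C\in\cG_t$) and then run the identical chain of conditionings using \eqref{eq:etka} at the key step.
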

\begin{proof}

The  necessity follows from Proposition 5.9.1.1 in \cite{JeaYorChe2009}.
To prove sufficiency it is enough to show, again by Proposition 5.9.1.1 in \cite{JeaYorChe2009}, that for  every $t \geq 0$ and  every bounded $\cF_\infty$-measurable random variable $\xi$  it holds that
\[
\bE(\xi| \cF_t \vee \cG_t) =\bE(\xi| \cF_t ).
\]
%
%	$2 \Rightarrow 3$
%	It suffices to show that
%	\[
%		\bE(\bE( \eta | \cF_t) \1_A ) = \bE( \eta \1_A ) , \qquad \text{ for every }A \in \cF_\infty
%	\]
%	Now note that we have
%	\begin{align*}
%		\bE( \eta \1_A ) &= \bE( \bE( \eta \1_A  | \cF_t \vee \cG_t)) =\bE( \eta \bE(  \1_A  | \cF_t \vee \cG_t)) =\bE( \eta \bE(  \1_A  | \cF_t ))
%		=
%		\bE(\bE( \eta \bE(  \1_A  | \cF_t )  | \cF_t ))
%		\\
%		&=\bE( \bE( \eta  | \cF_t ) \bE(  \1_A  | \cF_t ))
%		=\bE(\bE( \bE( \eta  | \cF_t )   \1_A  | \cF_t ))
%		=\bE(\bE( \eta  | \cF_t )   \1_A  ).
%	\end{align*}
%	$3 \Rightarrow 2$
%	It suffices to show
%	 that
Fix $\xi$, we need to show that
	\begin{equation}\label{eq:cosik}
		\bE( \xi \1_A ) = \bE( \bE(\xi | \cF_t) \1_A ),
	\end{equation}
	for every $A \in \cF_t \vee \cG_t$.
	Towards this end let us consider a family $\cU$ of sets defined as
	\[
		\cU = \set{ A: A=B\cap C,  B \in \cF_t, C \in \cG_t}.
	\]
	Note that $\cU$ is a $\pi$-system of sets which generates $\cF_t \vee  \cG_t$.
	Observe that family of all sets for which \eqref{eq:cosik} holds is a $\lambda$-system.
	Thus, by the Sierpinski's Monotone Class Theorem (cf. Theorem 1.1 in \cite{Kal2002}), which is also known as the Dynkin's $\pi-\lambda$ Theorem,  it suffices to prove \eqref{eq:cosik} for the sets from $\cU$, which we will do now so to complete the proof.

	For $A \in \cU$, we have
	\begin{align*}
		\bE( \xi \1_A ) & = \bE( \xi \1_{B\cap C} )  = \bE( \bE( \xi \1_{B\cap C} | \cF_\infty))
		 =\bE( \xi \1_B \bE(  \1_{ C} | \cF_\infty))
		 =\bE( \xi \1_B \bE(  \1_{ C} | \cF_t))
		\\
		&=\bE( \xi  \bE(  \1_{ B \cap C} | \cF_t)) =\bE( \bE(  \xi  \bE(  \1_{ B \cap C} | \cF_t)| \cF_t))
		=\bE( \bE(  \xi  | \cF_t) \bE(  \1_{ B \cap C} | \cF_t))
		\\
		&
		=\bE( \bE(  \bE(  \xi  | \cF_t)  \1_{ B \cap C} | \cF_t))
		= \bE(  \bE(  \xi  | \cF_t)  \1_{ B \cap C} )
		= \bE(  \bE(  \xi  | \cF_t)  \1_A ),
	\end{align*}
	where the fourth equality follows from \eqref{eq:etka} for $\eta = \1_C$.
\end{proof}

We are now ready to demonstrate the following proposition.

\begin{proposition}\label{prop:immersion}
	Filtration $\bH^{k}$ is $\bP$-immersed in  $\bH^{k+1}$.
\end{proposition}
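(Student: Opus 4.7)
The plan is to invoke Lemma \ref{lem:immersion} with $\bF = \bH^k$ and $\bG = \bF^{N^{k+1}}$; since $\bH^{k+1} = \bH^k \vee \bF^{N^{k+1}}$, it suffices to verify that for every $t \geq 0$ and every bounded $\cF^{N^{k+1}}_t$-measurable random variable $\eta$,
\[
\bE(\eta \mid \cH^k_t) = \bE(\eta \mid \cH^k_\infty).
\]

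The decisive structural fact I would extract from the construction in Section \ref{sec:constr} is that $N^{k+1}((0,t]\times \cdot)$ factorizes through an $\cH^k_t$-measurable random element and an auxiliary $\sigma$-field independent of $\cH^k_\infty$. Concretely, set
\[
\cI^{k+1} := \sigma\!\bigl(Z^{k+1,j},\, (U^{k+1,j}_i, V^{k+1,j}_i, W^{k+1,j}_i)_{i \geq 1} : j \geq 1\bigr),
\]
which, by the initial joint independence assumption on the whole uniform array, is independent of $\cH^k_\infty = \sigma(N^0, \ldots, N^k)$. From \eqref{eq:Nkp1j} every atom of $N^{k+1}_j$ lies in $(S^k_j, T]$, so $N^{k+1}_j((0,t]\times \cdot)$ vanishes unless $S^k_j < t$, in which event it is built deterministically from $(S^k_j, Y^k_j)$ and the $j$-th slice of $\cI^{k+1}$. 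Since $\{j : S^k_j < t\}$ and the corresponding pairs $(S^k_j, Y^k_j)$ are $\cF^{N^k}_t$-measurable, summing over $j$ via \eqref{eq:Nkp1} yields a measurable factorization
\[
N^{k+1}((0,t]\times \cdot) = \Phi\bigl(X_t, \cI^{k+1}\bigr)
\]
for some measurable $\Phi$ and some $\cH^k_t$-measurable element $X_t$.

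Any bounded $\cF^{N^{k+1}}_t$-measurable $\eta$ therefore takes the form $\eta = \widetilde{\Phi}(X_t, \cI^{k+1})$. Because $\cI^{k+1}$ is independent of $\cH^k_\infty$ while $X_t$ is $\cH^k_t$-measurable, Fubini gives
\[
\bE(\eta \mid \cH^k_\infty) = \int \widetilde{\Phi}(X_t, \iota)\, \bP_{\cI^{k+1}}(d\iota),
\]
which is a function of $X_t$ alone, hence $\cH^k_t$-measurable. Since $\cH^k_t \subseteq \cH^k_\infty$, the tower property then forces $\bE(\eta \mid \cH^k_t) = \bE(\eta \mid \cH^k_\infty)$, which is exactly what Lemma \ref{lem:immersion} requires.

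The main obstacle is the bookkeeping in the factorization step: verifying that the (random-length) ordered sub-sequence $\{(S^k_j, Y^k_j) : S^k_j < t\}$ is genuinely $\cF^{N^k}_t$-measurable, and that the generation-$(k{+}1)$ slice of the auxiliary uniform array is really independent of \emph{all} the data used to build $N^0, \ldots, N^k$. Both points are direct consequences of the set-up in Section \ref{sec:constr}, but they are the only non-cosmetic items; everything after the factorization is a one-line application of Fubini plus the tower property.
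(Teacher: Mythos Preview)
Your proof is correct, and it takes a genuinely different route from the paper's.

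Both arguments start from Lemma~\ref{lem:immersion} with $\bF=\bH^k$ and $\bG=\bF^{N^{k+1}}$, but they diverge in how the key identity $\bE(\eta\mid\cH^k_t)=\bE(\eta\mid\cH^k_\infty)$ is established. The paper does not go back to the raw construction; instead it cashes in on Proposition~\ref{prop:nukp1}, which has already identified $N^{k+1}$ as an $\wh{\bH}^{k+1}$-doubly stochastic marked Poisson process with $\cH^k_\infty$-measurable intensity $\nu^{k+1}$. The explicit Poisson formula \eqref{eq:poissonrm} then gives $\bP(A\mid\cH^k_\infty)$ for cylinder sets $A\in\cF^{N^{k+1}}_u$ as a product of Poisson weights involving $\nu^{k+1}((s_i,t_i]\times D_i)$, and since each such term is $\cH^k_{t_i}\subseteq\cH^k_u$-measurable (because $\nu^{k+1}$ integrates $N^k$ only over $(0,v)$), the conditional probability is already $\cH^k_u$-measurable, which is exactly what Lemma~\ref{lem:immersion} needs.

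Your approach bypasses the Poisson identification entirely and works directly with the layered independence of the uniform array: $N^{k+1}|_{(0,t]}$ is a measurable function of $N^k|_{(0,t)}$ (hence $\cH^k_t$-measurable) and the generation-$(k{+}1)$ seeds $\cI^{k+1}$ (independent of $\cH^k_\infty$), so the freezing lemma gives the result in one line. This is more elementary and more robust: it would still prove immersion even if the offspring mechanism were not conditionally Poisson, provided the same factorization holds. The paper's route, by contrast, is shorter within the flow of the paper because it reuses Proposition~\ref{prop:nukp1}, and it makes the role of the intensity kernel's time-locality (the fact that $\nu^{k+1}((s,t]\times D)$ depends on $N^k$ only through $(0,t)$) completely explicit. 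The two bookkeeping points you flag---$\cF^{N^k}_t$-measurability of the ordered atoms of $N^k$ in $(0,t)$, and independence of $\cI^{k+1}$ from $\sigma(N^0,\ldots,N^k)$---are indeed the only substantive checks, and both follow directly from the construction in Section~\ref{sec:constr}.
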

\begin{proof}
	Since $\bH^{k+1}  = \bH^{k} \vee \bF^{N^{k+1}}$ we use  Lemma  \ref{lem:immersion} to prove immersion of $\bH^{k}$ in  $\bH^{k} \vee \bF^{N^{k+1}}$.
	%by
	% showing that condition \eqref{eq:etka}  in Lemma  \ref{lem:immersion} hold.
	%Towards this end,
	It suffices to show that
			\begin{equation}\label{eq:immersja}
	\begin{aligned}
	\bP(A | \cH^{k}_\infty)
	=
	\bP( A | \cH^{k}_u),
	\end{aligned}
	\end{equation}
for every $u \geq 0 $ and every  $ A \in \cU$,
	where
	\begin{align*}
	\cU = \bigg\{ A : & A= \bigcap_{i=1}^n\set{ N^{k+1}((s_i, t_i] \times D_i )=l_i }, D_1, \ldots, D_n \text{ are disjoint sets, and }
		\\
		&
		0 \leq s_1 < t_1 \leq s_2 < t_2 \leq \ldots \leq s_n < t_n \leq u, n \in \bN \bigg\}.
	\end{align*}
	Indeed, if \eqref{eq:immersja}  holds for $ A \in \cU$, then since $\cU$ is a $\pi$-system which generates $\cF^{N^{k+1}}_u$ the monotone class theorem implies that \eqref{eq:etka} holds.
	It remains to show \eqref{eq:immersja} for $ A \in \cU$. Using Proposition \ref{prop:nukp1} and invoking \eqref{eq:poissonrm} we have 	
%\begin{equation}\label{eq:Nkp1}
%		\bP(N^{k+1}((s, t] \times D )=k | \wh{\cH}^{k+1}_0)
%		=
%		e^{ -\nu^{k+1} ((s, t] \times D)} \frac{(\nu^{k+1} ((s, t] \times D))^{k+1}}{k!}
%\end{equation}
%Since $\wh{\cH}^{k+1}_0 = \cH^k_\infty $
%and $\nu^{k+1} ((s, t] \times D)$ is $\cH^{k}_t$
%measurable we infer from \eqref{eq:Nkp1} that for $0 \leq s \leq t \leq u $ it holds
%	\[
%		\bP(N^{k+1}((s, t] \times D )=k | {\cH}^{k}_u)
%		=
%		e^{ -\nu^{k+1} ((s, t] \times D)} \frac{(\nu^{k+1} ((s, t] \times D))^{k+1}}{k!}
%	\]
%	From the fact that
	\begin{align}\label{eq:Nkp1a}
		 \bP\Big(\bigcap_{i=1}^n\set{ N^{k+1}((s_i, t_i] \times D_i )=l_i } | \wh{\cH}^{k+1}_0 \Big)
		 =
		 \prod_{i=1}^ne^{ -\nu^{k+1} ((s_i, t_i] \times D_i)} \frac{(\nu^{k+1} ((s_i, t_i] \times D_i))^{l_i+1}}{l_i!}.
	\end{align}
	Since $\wh{\cH}^{k+1}_0 = \cH^k_\infty $
	and $\nu^{k+1} ((s_i, t_i] \times D)$ is $\cH^{k}_{t_i}$
	measurable we infer that  the right hand side of \eqref{eq:Nkp1a}
	is $\cH^k_{t_n}$-measurable and hence also $\cH^k_{u}$-measurable  for arbitry $u \geq t_n$. Consequently by taking conditional expectations with respect to $\cH^k_u$ for $u \geq t_n$ we conclude that \eqref{eq:immersja} holds for $A \in \cU$. The proof is complete.
\end{proof}

{We will determine now the compensators for $H^0:= N^0$ and for  $H^k$ given by  \eqref{eq:Hk} for $k \geq 1$.}
\begin{proposition}\label{prop:hinftycomp}
	The $\bH^{0}$-compensator of $H^{0}$, is given by
	\[
			\vartheta^{0}((s,t] \times D) = \int_s^t \eta(v,D) dv,\qquad 0\leq s\leq t,\ D\in \cE^\Delta,
	\]
	where
	the kernel $\eta$ appears in \eqref{new-nu}.

	The $\bH^{k}$-compensator of $H^{k}$, for $k \geq 1$, is given by
\begin{equation}\label{eq:kompHk}
	 	\vartheta^{k}((s,t] \times D) = \int_s^t \Big(\eta(v,D) +   \int_{(0,v) \times E^\Delta} f(v, u, y, D) H^{k-1}(du,dy) \Big) dv,\qquad 0\leq s\leq t,\ D\in \cE^\Delta.
\end{equation}
\end{proposition}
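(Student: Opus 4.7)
The plan is to handle the $k=0$ case directly from earlier results and to treat $k \geq 1$ by combining the already-computed $\bH^m$-compensators of each $N^m$ with the immersion property from Proposition \ref{prop:immersion}. For $k=0$, the assertion is immediate: Proposition \ref{prop:fdsmpp}(i) states that $\nu^0$ is the $\bH^0$-compensator of $N^0 = H^0$, and by \eqref{eq:nu0-intensity} we have $\nu^0 = \vartheta^0$.

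For $k \geq 1$, I would write $H^k = \sum_{m=0}^k N^m$, so that if each summand's compensator is taken with respect to a common filtration, then the compensator of the sum is the sum of the compensators. What is in hand is: $\nu^0$ is the $\bH^0$-compensator of $N^0$ (Proposition \ref{prop:fdsmpp}(i)), and for $1 \leq m \leq k$, $\nu^m$ is the $\bH^m$-compensator of $N^m$ (Proposition \ref{prop:nukp1}). The key step is to promote each $\nu^m$ to an $\bH^k$-compensator via immersion. Since $\bH^m$ is $\bP$-immersed in $\bH^{m+1}$ by Proposition \ref{prop:immersion}, iterating yields that $\bH^m$ is $\bP$-immersed in $\bH^k$ for every $m \leq k$. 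Consequently every $(\bH^m,\bP)$-martingale remains an $(\bH^k,\bP)$-martingale; applied to the compensated measure $N^m - \nu^m$ this gives that $\nu^m$ is also an $\bH^k$-compensator of $N^m$. Predictability is preserved under enlargement because $\bH^m \subset \bH^k$, so any $\bH^m$-predictable random measure is automatically $\bH^k$-predictable.

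The remaining computation is direct. Using \eqref{eq:nu0-intensity} and \eqref{eq:nu-intensity-k+1}, we obtain
\begin{align*}
\vartheta^k((s,t] \times D) &= \sum_{m=0}^k \nu^m((s,t] \times D) \\
&= \int_s^t \eta(v,D)\, dv + \sum_{m=1}^k \int_s^t \int_{(0,v) \times E^\Delta} f(v,u,y,D)\, N^{m-1}(du,dy)\, dv \\
&= \int_s^t \Big( \eta(v,D) + \int_{(0,v) \times E^\Delta} f(v,u,y,D)\, H^{k-1}(du,dy) \Big) dv,
\end{align*}
where in the last line one interchanges the finite sum and the integral and uses the identity $\sum_{m=0}^{k-1} N^m = H^{k-1}$, yielding \eqref{eq:kompHk}.

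The only delicate point is the immersion-based lift of the $\bH^m$-compensator of $N^m$ to an $\bH^k$-compensator; this hinges on Proposition \ref{prop:immersion} and on the elementary fact that immersion is preserved under iteration of the filtration enlargement. Everything else is predictability of kernels with continuous $t$-sections (automatic since the summed compensators have absolutely continuous distribution functions in $t$) and a routine manipulation of the explicit expressions for $\nu^m$.
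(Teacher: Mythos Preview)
Your proposal is correct and follows essentially the same approach as the paper: both rely on Proposition~\ref{prop:immersion} to lift the $\bH^m$-compensator $\nu^m$ of $N^m$ to the larger filtration, and then sum the pieces. The only organizational difference is that the paper proceeds by induction on $k$ (adding one term $\nu^{k+1}$ at each step via a single immersion $\bH^k \hookrightarrow \bH^{k+1}$), whereas you handle general $k$ directly by iterating the immersion $\bH^m \hookrightarrow \bH^{m+1} \hookrightarrow \cdots \hookrightarrow \bH^k$ and summing all $\nu^m$ at once; the underlying argument is the same.
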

\begin{proof}
	The proof goes by induction.

	{Since $H^{0} = N^0$, then the form of  $\bH^{0}$-compensator of $H^{0}$ follows from assertion i) of Proposition \ref{prop:fdsmpp} and from  Proposition 6.1.4 \cite{LasBra1995}. }

	Suppose now that $\bH^k$-compensator of $H^k$ is given by \eqref{eq:kompHk}.
	This means that for every $D \in \cE^\Delta$ the process
	\begin{equation}
	M^{k}_t(D) = (H^{k}- \vartheta^{k})((0,t] \times D) ,  \quad t \geq 0,
	\end{equation}
	is an $\bH^k$-local martingale. Proposition \ref{prop:immersion}  implies that $M^k(D)$ is an $\bH^{k+1}$-local martingale.
	 We know from Proposition \ref{prop:nukp1} that
	\[
		L^{k+1}_t(D) = (N^{k+1}- \nu^{k+1})((0,t] \times D) ,  \quad t \geq 0,
	\]
	is an $\bH^{k+1}$-local martingale. Thus
		$M^{k}(D) + L^{k+1}(D)$ is an $\bH^{k+1}$-local martingale. This $\bH^{k+1}$-local martingale can be written in the form
	\[
		M^{k}_t(D) + L^{k+1}_t(D) = (H^k + N^{k+1} - ( \vartheta^k + \nu^{k+1}))((0,t] \times D)
	 =(H^{k+1} - ( \vartheta^k + \nu^{k+1}))((0,t] \times D),
	\]
	where the second equality follows from
	\[
	H^{k+1} = H^k + N^{k+1}.
	\]
	Note that the random measure $\vartheta^k + \nu^{k+1}$ is $H^{k+1}$-predictable so it is the
	$\bH^{k+1}$-compensator of $H^{k+1}$. To complete the proof it suffices to show that
	$\vartheta^k + \nu^{k+1} = \vartheta^{k+1}$. By the induction hypothesis on $\vartheta^k$ and by \eqref{eq:nu-intensity-k+1} we have
	\begin{align*}
	(\vartheta^k + \nu^{k+1})((s,t] \times D)
	&=
	\int_s^t \Big(\eta(v,D) +   \int_{(0,v) \times E^\Delta} f(v, u, y, D) (H^{k-1} + N^{k})(du,dy) \Big) dv
	\\
	&=\vartheta^{k+1}((s,t] \times D).
	\end{align*}
	The proof is complete.
\end{proof}

Before we conclude our construction of a generalized multivariate Hawkes process, we derive the following result.
\begin{proposition}
	The $\bH^\infty$-compensator of $H^\infty$ is given by
	\begin{equation}\label{eq:cudak}
		\vartheta^{\infty}((s,t] \times D)
		=
		\int_s^t \Big(\eta(v,D) +   \int_{(0,v) \times E^\Delta} f(v, u, y, D) H^{\infty}(du,dy) \Big) dv.
	\end{equation}
%In other words $H^\infty$ is $\bH^\infty$-Hawkes process.
\end{proposition}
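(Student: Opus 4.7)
The strategy is to deduce the $\bH^\infty$-compensator of $H^\infty$ from the already-established $\bH^k$-compensators of the partial sums $H^k$ by combining an immersion argument with monotone convergence. I first observe that $H^k \uparrow H^\infty$ pathwise as measures on $\mathcal{B}(\bR_+) \otimes \cE^\Delta$ (each $N^m$ being non-negative), and by iterated monotone convergence applied to \eqref{eq:kompHk} together with $H^{k-1} \uparrow H^\infty$, one also obtains $\vartheta^k \uparrow \vartheta^\infty$ pathwise. Moreover the limit $\vartheta^\infty((0,t]\times D)$ is $\bH^\infty$-adapted and continuous in $t$, hence $\bH^\infty$-predictable. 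So the core of the argument is to justify passing to the limit in the defining expectation equality of compensators.

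The key preliminary step is to upgrade Proposition \ref{prop:immersion}: $\bH^k$ is $\bP$-immersed in $\bH^\infty$ for every $k$. Since immersion is stable under composition (any $\bF$-local martingale is a $\bG$-local martingale is an $\bH$-local martingale), iterating Proposition \ref{prop:immersion} gives immersion of $\bH^k$ in $\bH^n$ for every $n \geq k$. To reach $\bH^\infty = \bigvee_n \bH^n$, I would verify the martingale-preservation characterization of immersion for test sets $A \in \cH^\infty_s$, using that such sets are generated by the $\pi$-system $\bigcup_{n \geq k} \cH^n_s$, and invoke the Monotone Class Theorem in the same spirit as in the proof of Lemma \ref{lem:immersion}. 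With this in hand, each $\vartheta^k$ is automatically the $\bH^\infty$-compensator of $H^k$ as well: it is $\bH^k$-predictable hence $\bH^\infty$-predictable, and the process $(H^k - \vartheta^k)((0,\cdot]\times D)$, being an $\bH^k$-local martingale, is an $\bH^\infty$-local martingale under immersion.

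Finally I close the loop as follows. For every non-negative $\bH^\infty$-predictable function $F$, the compensating property of $\vartheta^k$ with respect to $\bH^\infty$ gives
\begin{equation*}
\bE \int_0^\infty \int_{E^\Delta} F(u,y)\, H^k(du,dy) = \bE \int_0^\infty \int_{E^\Delta} F(u,y)\, \vartheta^k(du,dy).
\end{equation*}
Letting $k \to \infty$ and applying monotone convergence on both sides (legitimate because $H^k \uparrow H^\infty$ and $\vartheta^k \uparrow \vartheta^\infty$ pathwise) yields the same identity for $H^\infty$ and $\vartheta^\infty$. Combined with the $\bH^\infty$-predictability of $\vartheta^\infty$, this is exactly the characterization recalled just before Definition \ref{def:genHawkes}, identifying $\vartheta^\infty$ as the $\bH^\infty$-compensator of $H^\infty$. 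I expect the main obstacle to be precisely the transfer of immersion to the limiting filtration $\bH^\infty$: immersion is not automatically preserved under arbitrary filtration enlargements, and the monotone class step sketched above is what makes it work — this is where I would spend the most technical care, likely isolating it as an auxiliary lemma.
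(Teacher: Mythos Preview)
Your proposal is correct and follows essentially the same approach as the paper: upgrade immersion from $\bH^k \hookrightarrow \bH^{k+1}$ to $\bH^k \hookrightarrow \bH^\infty$, conclude that $\vartheta^k$ is the $\bH^\infty$-compensator of $H^k$, and then pass to the limit in the defining expectation identity. The paper is terser on both points --- it invokes Propositions~\ref{prop:immersion} and~\ref{prop:hinftycomp} without spelling out the passage to $\bH^\infty$, and it cites Lemma~\ref{lem:NCT} for the limit where you use monotone convergence directly --- but the skeleton is identical, and your explicit attention to the immersion-to-limit step is well placed.
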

\begin{proof}
	Proposition  \ref{prop:immersion} and Proposition \ref{prop:hinftycomp} imply that for every $k \geq 1$, the $\bH^{\infty}$-compensator of $H^{k}$ is given by \eqref{eq:kompHk}. Thus, we see that for any $k \geq 1$ and for an arbitrary non-negative $\bH^\infty$-predictable function  $F:\Omega \times [0,T] \times E^\Delta \rightarrow \bR$ it holds
	\begin{equation*}%\label{eq:CHk}
	\bE \Big( \int_0^\infty \int_{E^\Delta} F(v,y) H^{k}(dv,dy) \Big)
	=\bE \Big( \int_0^\infty \int_{E^\Delta} F(v,y) \vartheta^{k}(dv,dy) \Big).
	\end{equation*}	
	Using Lemma \ref{lem:NCT} in an  analogous way as in the proof of Proposition \ref{prop:nukp1} we obtain
\begin{equation}\label{eq:CHlim}
\bE \Big(  \int_0^\infty \int_{E^\Delta} F(u,y) H^{\infty}(du,dy) \Big)
=
\bE \Big( \int_0^\infty \int_{E^\Delta} F(u,y) \vartheta^{\infty}(du,dy) \Big).
\end{equation}
This completes the proof.
\end{proof}

We are now ready to conclude our construction of a generalized multivariate Hawkes process. Let $T_\infty$ be the first accumulation time of $H^\infty$.\footnote{$T_\infty =\lim_{n\rightarrow \infty} T^{\infty}_n$, where  $T^{\infty}_n:= \inf \set{ t: H^{\infty}((0,t] \times E^\Delta)  \geq n}$.}  Then we have the following

\begin{theorem}
	The process $N:=\1_{]\!]0,T_\infty[\![}H^\infty $ is an $\bF^N$-Hawkes process with the Hawkes kernel\footnote{We recall our notational convention that $\eta =\eta_T:= \1_{[0,T]}\eta$ and $f =f_T:= \1_{[0,T]}f$.}
\[ \kappa(t,dx) =  \eta(t,dx) +   \int_{(0,t) \times E^\Delta} f(t, u, y, dx)N(du,dy).\]
\end{theorem}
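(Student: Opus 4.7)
The plan is to leverage the machinery already developed in Section \ref{constr}: we have computed the $\bH^\infty$-compensator $\vartheta^\infty$ of the counting measure $H^\infty$, and we need to show that its restriction to $]\!]0,T_\infty[\![$, combined with the fact that $N$ and $H^\infty$ coincide on that stochastic interval, delivers the desired $\bF^N$-compensator of $N$. First, I would verify that $N := \1_{]\!]0,T_\infty[\![} H^\infty$ is indeed a marked point process in the sense of Section \ref{sec:GenGHawkes}: the successive atoms $(T_n,X_n)$ of $H^\infty$ strictly before $T_\infty$ form an ordered sequence with $T_n<T_{n+1}$ on $\set{T_n<\infty}$, and by definition of $T_\infty$ as the first accumulation time of $H^\infty$ one has $T_\infty=\lim_n T_n$. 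Simplicity of the counting measure on $]\!]0,T_\infty[\![$ (no two atoms at the same time) follows from the fact that, at each level of the construction, $N^{k+1}_j$ has an absolutely continuous $\wh{\bH}^{k+1}$-compensator by Proposition \ref{prop:fdsmpp}, so that pairwise coincidences of event times occur on a $\bP$-null set.

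Next, I would identify the random measure
\[
\nu(\omega,dt,dy)
:=
\1_{]\!]0,T_\infty(\omega)[\![}(t)\Big(\eta(t,dy) + \int_{(0,t)\times E^\Delta} f(t,u,y',dy)\, N(du,dy')\Big)dt
\]
as the $\bF^N$-compensator of $N$. On the stochastic interval $]\!]0,T_\infty[\![$ the measures $N$ and $H^\infty$ coincide, hence the restriction of the expression \eqref{eq:cudak} to $]\!]0,T_\infty[\![$ is precisely $\nu$. Localizing by the $\bF^N$-stopping times $T_n$ (which, on $\set{T_n<\infty}$, lie strictly before $T_\infty$), the $\bH^\infty$-local martingale property of $H^\infty-\vartheta^\infty$ yields that, for each $n$ and each $D\in\cE^\Delta$, the stopped process $(N-\nu)((0,\cdot\wedge T_n]\times D)$ coincides with $(H^\infty-\vartheta^\infty)((0,\cdot\wedge T_n]\times D)$ and is therefore an $\bH^\infty$-martingale. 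Since $T_n\uparrow T_\infty$, this shows that $N-\nu$ is an $\bH^\infty$-local martingale, so $\nu$ is the $\bH^\infty$-compensator of $N$.

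Finally, I would transfer the compensator property from $\bH^\infty$ to $\bF^N$. Observe that $\bF^N\subseteq\bH^\infty$, and that $\nu$ is $\bF^N$-predictable, because $\eta$ is deterministic, the integrand $f$ is deterministic, and the path $t\mapsto\int_{(0,t)\times E^\Delta} f(t,u,y',\cdot)N(du,dy')$ is a continuous functional of the past of $N$. For any non-negative $\bF^N$-predictable function $F$, it is automatically $\bH^\infty$-predictable, so the $\bH^\infty$-compensating property gives $\bE\int F\,dN=\bE\int F\,d\nu$. Together with the $\bF^N$-predictability of $\nu$ and the uniqueness of the compensator, this identifies $\nu$ as the $\bF^N$-compensator of $N$. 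Reading off the Hawkes kernel from the expression for $\nu$ then yields the claim.

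The main obstacle I expect lies in the careful bookkeeping around $T_\infty$: the compensator $\vartheta^\infty$ of $H^\infty$ need not vanish on $[\![T_\infty,\infty[\![$ (indeed it may be infinite there if the kernel $\kappa$ explodes), whereas the Hawkes compensator $\nu$ of the definition is, by construction, zero on that interval. The stopping argument based on $T_n\uparrow T_\infty$ circumvents this issue and allows the transfer of the local-martingale property without integrability complications; once this is done, the passage from $\bH^\infty$-compensator to $\bF^N$-compensator via uniqueness and $\bF^N$-predictability is routine.
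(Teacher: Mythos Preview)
Your proposal is correct and follows essentially the same route as the paper: identify $N$ with the restriction of $H^\infty$ to $]\!]0,T_\infty[\![$, observe that on this interval the $\bH^\infty$-compensator $\vartheta^\infty$ of $H^\infty$ can be rewritten with $N$ in place of $H^\infty$ (since the two measures agree there), and then pass from $\bH^\infty$ to $\bF^N$ using that the resulting $\nu$ is $\bF^N$-predictable while every $\bF^N$-predictable test function is automatically $\bH^\infty$-predictable. The only cosmetic difference is that you phrase step three via localization by the stopping times $T_n$ and the local-martingale property of the stopped difference, whereas the paper works directly with the compensating identity $\bE\int F\,dN=\bE\int F\,\1_{]\!]0,T_\infty[\![}\,d\vartheta^\infty$ for non-negative predictable $F$; these are equivalent formulations of the same argument.
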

\begin{proof}
	Let us define a sequence $(T_n,X_n)_{n \geq 1}$ by
	\begin{align*}
	T_n &= \inf \set{ t >0: H^\infty((0,t] \times E^\Delta)  \geq n},
	\\
	X_n &=
	{
		\beta^{-1}
		\Big(
		\1_\set{ T_n < \infty }
		\int_{E^\Delta} \beta(x) H^\infty(\set{T_n} \times dx)
		+ \widehat\partial \1_\set{ T_n = \infty } \Big)},
	\end{align*}
	and the random measure
	\[
		N(dt,dx) = \sum_{n>0 } \delta_{(T_n,X_n)} (dt,dx) \1_\set{T_n < \infty}.
	\]
	Then
	\[
		N(dt,dx) = H^\infty(dt, dx) |_{]\!]0, T_\infty[\![ \times E^\Delta}.
	\]
	Consequently, such restriction of $H^\infty$ to $]\!]0, T_\infty[\![ \times E^\Delta$ is a marked point process.
	Moreover, since $]\!]0, T_\infty[\![ $ is an $\bH^\infty$-predictable set, we have for arbitrary non negative $\bH^\infty$-predictable function  $F:\Omega \times \bR_+ \times E^\Delta \rightarrow \bR$
\begin{align} \nonumber
	&
		\bE \Big( \int_0^\infty \int_{E^\Delta} F(u,x) N(du,dx) \Big) =
		\bE \Big( \int_0^\infty \int_{E^\Delta} F(u,x) \1_{]\!]0, T_\infty[\![ \times E^\Delta}  H^\infty(du,dx) \Big)
	\\ \label{eq:Hinftycomp}
	&	=
		\bE \Big( \int_0^\infty \int_{E^\Delta} F(u,x) \1_{]\!]0, T_\infty[\![ \times E^\Delta}  \vartheta^\infty(du,dx) \Big),
\end{align}
where $\vartheta^\infty$ is given in \eqref{eq:cudak}.

So the compensator of the  restriction of  $H^{\infty}$ to $]\!]0, T_\infty[\![ \times E^\Delta$ is the restriction to $]\!]0, T_\infty[\![ \times E^\Delta$ of  compensator of $H^{\infty}$.
Now we will prove
that
\[
\vartheta^\infty(dt,dx)|_{]\!]0, T_\infty[\![ \times E^\Delta}=
\1_{(0, T_\infty(\omega))}(t) \kappa(t,dx) dt,
\]
where
\[
\kappa(t,dx) =\eta(t,dx) +   \int_{(0,t) \times E^\Delta} f(t, u, y, dx) N(du,dy).
\]
Towards this end note that for arbitrary  $0 \leq s  < t \leq T$ and  $D \in \cE^\Delta$ we have
\begin{align*}
&\vartheta^\infty|_{]\!]0, T_\infty[\![ \times E^\Delta}((s,t] \times D)
=\vartheta^{\infty}((s,t] \cap (0, T_\infty(\omega))  \times D)
\\
&=
\int_s^t \1_{(0, T_\infty(\omega))} (v)\Big(\eta(v,D) +   \int_{(0,v) \times E^\Delta} f(v, u, y, D) H^{\infty}(du,dy) \Big) dv.
\end{align*}
The second term above can be written as
\begin{align*}
&\1_{(0, T_\infty(\omega))} (v)
 \int_{(0,v) \times E^\Delta} f(v, u, y, D) H^{\infty}(du,dy)
\\
& =
\int_{(0,T] \times E^\Delta} \1_{(0,v)}(u) \1_{(0, T_\infty(\omega))} (v) f(v, u, y, D) H^{\infty}(du,dy)
%\\
%&=
%\int_{(0,T] \times E^\Delta} \1_\set{u < v < T_\infty(\omega)} f(v, u, y, D) H^{\infty}(du,dy)
%&
\\
&=
\int_{(0,T] \times E^\Delta} \1_\set{u < v < T_\infty(\omega) } \1_{(0, T_\infty(\omega))}(u)  f(v, u, y, D) H^{\infty}(du,dy)
\\
&=
\1_{(0,T_\infty(\omega)) } (v)\int_{(0,T] \times E^\Delta} \1_\set{u < v}
 f(v, u, y, D) \1_{(0, T_\infty(\omega))}(u)  H^{\infty}(du,dy)
 \\
 &
 =
 \1_{(0,T_\infty(\omega)) } (v)\int_{(0,v) \times E^\Delta}
 f(v, u, y, D) N(du,dy).
\end{align*}
Hence
\begin{align*}
&\vartheta^\infty|_{]\!]0, T_\infty[\![ \times E^\Delta}((s,t] \times D)
\\
&=
\int_s^t \1_{(0, T_\infty(\omega))} (v)\Big(\eta(v,D) +   \int_{(0,v) \times E^\Delta} f(v, u, y, D) N(du,dy) \Big) dv.
\\
&=
\int_s^t \1_{(0, T_\infty(\omega))} (v)\kappa(v,D) dv.
\end{align*}
This and
\eqref{eq:Hinftycomp}  imply that $\vartheta^\infty|_{]\!]0, T_\infty[\![ \times E^\Delta}$ is an $\bF^N$-predictable random measure
such that for arbitrary non negative 	$\bF^N$-predictable function $F:\Omega \times \bR_+ \times E^\Delta \rightarrow \bR$  we have
\begin{align} \nonumber
&
\bE \Big( \int_0^\infty \int_{E^\Delta} F(u,x) N(du,dx) \Big) =
\bE \Big( \int_0^\infty \int_{E^\Delta} F(u,x) \1_{]\!]0, T_\infty[\![ } (u) \kappa(u,dx) du \Big).
\end{align}
 % \subset \bH^\infty$
Thus $N$ is a $\bF^N$-Hawkes process (restricted to $[0,T]\times E^\Delta$) with the Hawkes kernel $\kappa$.
\end{proof}

\subsection{The pseudo-algorithm}

In the description of the pseudo-algorithm below we use the objects $\eta$, $f$, $\wh \eta$, $\wh f$, $G_1$ and $G_2$ that underly the construction of our Hawkes process given in Section \ref{sec:constr}.

The steps of the pseudo-algorithm are based on the steps presented in our construction of a generalized multivariate Hawkes process with deterministic kernels $\eta$ and $f$, and  they are:
\begin{itemize}
	\item[Step 0.] Choose a positive integer $K$, set  $C^0 = \varnothing $.
	\item[Step 1.] Generate a realization, say $p$,  of a Poisson random variable with parameter $T \wh \eta $.
	%\item[Step 2:] If $p^0>0$, then generate an array  $(u^{0,1}_i, v^{0,1}_i, w^{0,1}_i)_{i=1}^{p^0}$ of
	%realizations of independent random variables uniformly distributed on $[0,1]$.
	\item[Step 2.] If $p=0$, then go to Step 3. Else, if $p> 0$, then  for $i=1, \ldots , p\, $:
	\begin{itemize}
		\item Generate  realizations $u$ and $v$  of independent  random variables uniformly distributed on $[0,1]$. Set
	$  t = Tu,  a =\hat{\eta} $.
	\item
	If $a \leq  \eta(t, E^\Delta)$, then generate a realization $w$ of random variable  uniformly distributed on $[0,1]$, compute $x = G_1(t, w)$ and  include  $(t,x)$ into the cluster $C^0$.
	\end{itemize}

	\item[Step 3.]
	Set  $\mathcal{N} = C^0$, $C^{prev} = C^0$, $k = 0$. %If $C^0 = \varnothing$, then simulation ends.
	
	\item[Step 4.]  While $C^{prev} \neq \varnothing$ and  $k \leq K\, $:
					\begin{itemize}
					\item Set $C^{new} = \varnothing$ .
					\item For every $(s,y)
					\in C^{prev}$:
					\begin{itemize}
					\item  generate a realization $p$  of Poisson random variable with parameter\\ $(T-s)\widehat{f}(s,y)$.
					\item  for $i=1, \ldots, p$:
					\begin{itemize}
					\item[$\diamond$] Generate  realizations $u$ and $v$  of independent  random variables uniformly distributed on $[0,1]$ . Set
					$  t = s + (T-s)u,  a =\widehat{f}(s,y)v $.
					\item[$\diamond$]
					If $a \leq  f(t,s,y, E^\Delta)$, then generate a realization $w$ of random variable  uniformly distributed on $[0,1]$, compute $x = G_2(t,s,y, w)$ and  include  $(t,x)$ into the cluster $C^{new}$.
					\end{itemize}
					\end{itemize}
						\item Set $ \mathcal{N} =  \mathcal{N} \cup C^{new}$, $C^{prev} = C^{new}$. 						
						\item Set $k= k +1$.
					\end{itemize}
		\item[Step 5.] Return $ \mathcal{N}$.
\end{itemize}

\subsubsection{Numerical examples via simulation}

The pseudo-algorithm presented above is implemented here  in two cases. In the first case, presented in Example \ref{1}, we implemented the algorithm for a  generalized bivariate Hawkes process with $E_1=E_2 = \set{1}$. In the second case,  presented in Example \ref{2}, we set  $E_1=E_2 = \bR$.

We used Python to run the simulations and to plot graphs.

\begin{example}\label{1}$\ $ {\bf Bivariate point Hawkes process}
\medskip

Here we implement our pseudo-algorithm for a bivariate point Hawkes process, that is the generalized bivariate Hawkes process with $E_1=E_2 = \set{1}$, and hence with \[ E^\Delta = \set{ (1, \Delta), (\Delta,1), (1,1)}. \]
Moreover, we let
\[
\eta(t,dy):=
\eta_{1}(t)
\delta_{(1,\Delta)} (d y)
+
\eta_{2}(t)\delta_{(\Delta,1)} (d y)
+
\eta_{c}(t)\delta_{(1,1)} (d y),
\]
where
\[
\eta_i(t) := \alpha_i  + ( \eta_i(0) - \alpha_i ) e^{-\beta_i t}, \qquad i\in \{1,2,c\},
\]
and $\alpha_i, \eta_i(0), \beta_i$ are non-negative constants.
We assume that, for $0\leq s\leq t$, the kernel $f$ is given as in \eqref{eq:f-Hawkes} with the decay functions $w_{i,j}$  in the exponential form:
\[
w_{i,j}(t,s) = e^{-\beta_i(t-s)}, \qquad i,j \in \{1,2,c\},
\]  with constant non-negative impact functions:
\begin{align*}
g_{1,1}(x_1)=\vartheta_{1,1}, \quad
g_{1,2}(x_2)=\vartheta_{1,2}, \quad
g_{1,c}(x)=\vartheta_{1,c}, \\
g_{2,1}(x_1)=\vartheta_{2,1}, \quad
g_{2,2}(x_2)=\vartheta_{2,2}, \quad
g_{2,c}(x)=\vartheta_{2,c},\\
g_{c,1}(x_1)=\vartheta_{c,1}, \quad
g_{c,2}(x_2)=\vartheta_{c,2}, \quad
g_{c,c}(x)=\vartheta_{c,c},
\end{align*}
and with Dirac kernels:
\[
\phi_1(x,dy_1) = \delta_1(dy_1), \quad
\phi_2(x,dy_2) = \delta_1(dy_2), \quad
\phi_c(x,dy_1, dy_2) = \delta_{(1,1)}(dy_1, dy_2).
\]
Thus, the kernel $f$ is of the form
\begin{align}
f&(t,s,x,dy) \label{eq:kernel-exp}
\\ \nonumber
=&  \  e^{-\beta_1(t-s)} \Big(  \vartheta_{1,1}\1_{\set{1} \times\Delta } (x)  + \vartheta_{1,2} \1_{\Delta \times \set{1}} (x)
%\\ & \qquad \qquad \quad
+ \vartheta_{1,c} \1_{\set{1} \times \set{1} } (x) \Big) \delta_{(1,\Delta)} (d y)   \\
\nonumber
&+  e^{-\beta_2(t-s)} \Big(  \vartheta_{2,1}  \1_{\set{1} \times\Delta } (x)  +  \vartheta_{2,2}\1_{\Delta \times \set{1}} (x)
%\\ & \qquad \qquad \quad
+  \vartheta_{2,c} \1_{\set{1} \times \set{1} } (x)\Big) \delta_{(\Delta,1)} (d y) \\ \nonumber
&+  e^{-\beta_c(t-s)}
\Big(  \vartheta_{c,1} \1_{\set{1} \times\Delta } (x) +  \vartheta_{c,2} \1_{\Delta \times \set{1}} (x)
%\\ 		& \qquad \qquad \quad
+ 	 \vartheta_{c,c} \1_{\set{1} \times \set{1} } (x)  \Big) \delta_{(1,1)} (d y).
% \delta_\Delta(dy_1)  \otimes \Big(  \vartheta_{2,1}\vartheta_1(x_1)  \1_{\set{1} \times\Delta } (x) \delta_{x_1}( dy_2 ) +  \vartheta_{2,2}\vartheta_2(x_2) \1_{\Delta \times \set{1}} (x) \delta_{x_2}( dy_2 )  \Big)
\end{align}	
The coordinates of $N$ (cf. \eqref{N-i}) reduce here to counting (point) processes, so that
\begin{equation*}
N^1_t  = N^1((0,t],\set{1})=N((0,t],\set{1} \times \set{1, \Delta}),
\end{equation*}
and
\begin{equation*}
N^2_t = N^2((0,t],\set{1})=N((0,t],\set{1, \Delta} \times \set{1}  ).
\end{equation*}
Moreover, $N^{idio, \set{1,2} }$ -- the MPP of idiosyncratic group of $\set{1,2}$ coordinates -- reduces here to the  process counting the number of occurrences of the common events:
\begin{equation*}
N^{c}_t = N^{idio, \set{1,2} }((0,t],\set{(1,1)})=N((0,t],\set{(1, 1)}   ).
\end{equation*}
We take the following values of parameters:
\[
\begin{array}{|c|c|c|c|}
	\hline
	i & \eta_i(0) & \alpha_i &  \beta_i \\ \hline \hline
	1 & 0.5 & 0.5  & 2.5 \\ \hline
	2 & 0.5 & 0.5 &  2.5 \\ \hline
	c & 0.25 & 0.25 & 5.0 \\ \hline
\end{array}\qquad
\begin{array}{|lc|ccc|}
\hline
\vartheta_{i,j} &  &                       &       j                &  \\ \cline{3-5}
&  & \multicolumn{1}{c|}{1} & \multicolumn{1}{c|}{2} & c \\ \hline
\multicolumn{1}{|l|}{} & 1 & \multicolumn{1}{l|}{0.5} & \multicolumn{1}{l|}{0.25} & 0.25 \\ \cline{2-5}
\multicolumn{1}{|r|}{i} & 2 & \multicolumn{1}{l|}{0.25} & \multicolumn{1}{l|}{0.5} &  0.25 \\ \cline{2-5}
\multicolumn{1}{|l|}{} & c & \multicolumn{1}{l|}{0.25} & \multicolumn{1}{l|}{0.25} & 0.25 \\ \hline
\end{array}.
\]
%\hspace*{-2cm}
%\includegraphics[scale=0.2]{./python/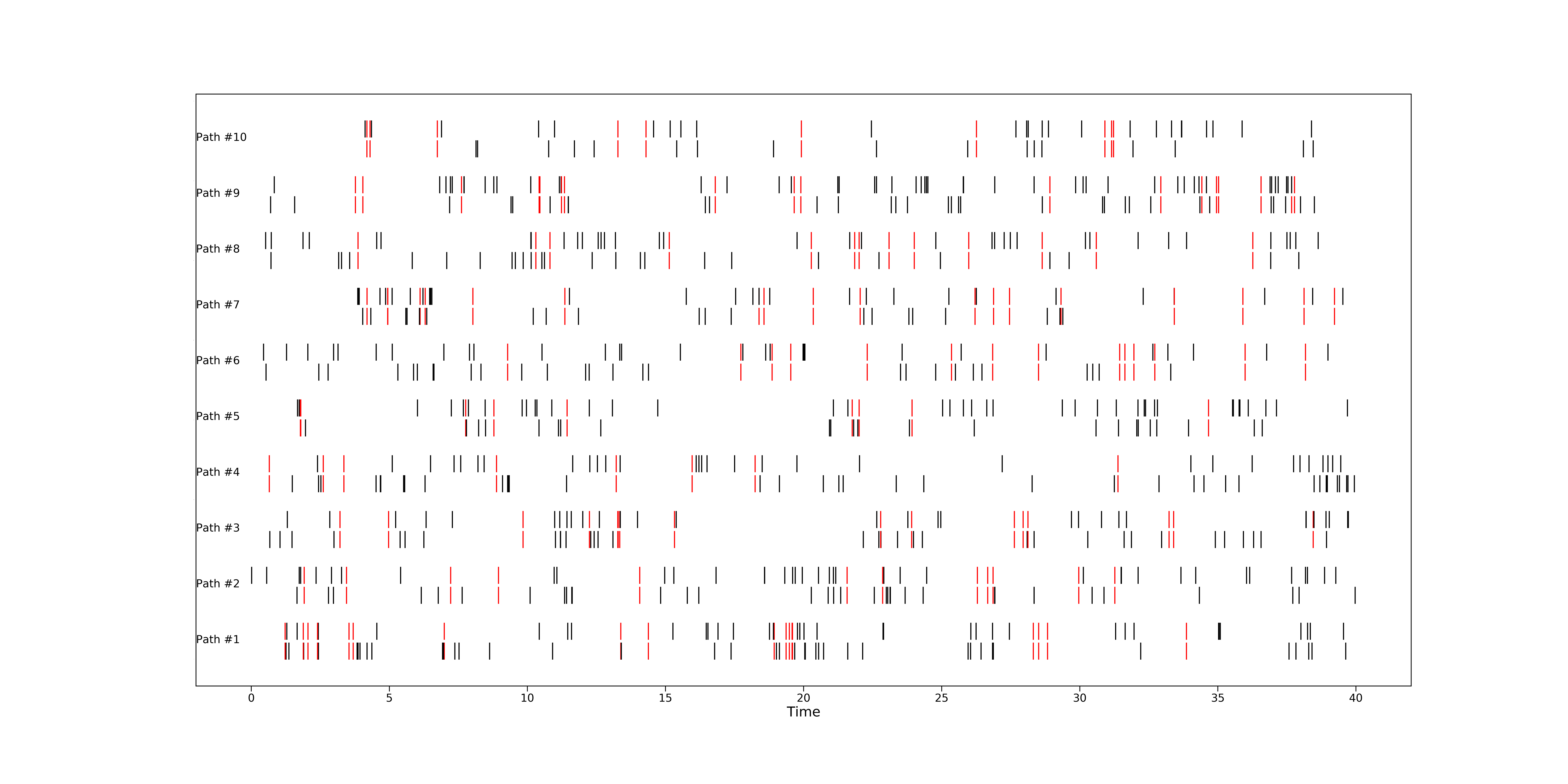}
Simulated sample paths of $N$  corresponding to the above setting  are presented in Figure 1 and Figure 2.

\begin{figure}[h]
\hspace*{-2cm}	%\includegraphics[width=0.5\textwidth]{spiral}
	\includegraphics[width=1.2\textwidth]{}
	\caption{Bar plot of 10 paths of a bivariate point Hawkes process. Red bars represent common events, black bars represents idiosyncratic events.}
\end{figure}
\begin{figure}[h]
	\hspace*{-2cm}
	%\centering
	%\includegraphics[width=0.5\textwidth]{spiral}
	\includegraphics[width=1.2\textwidth]{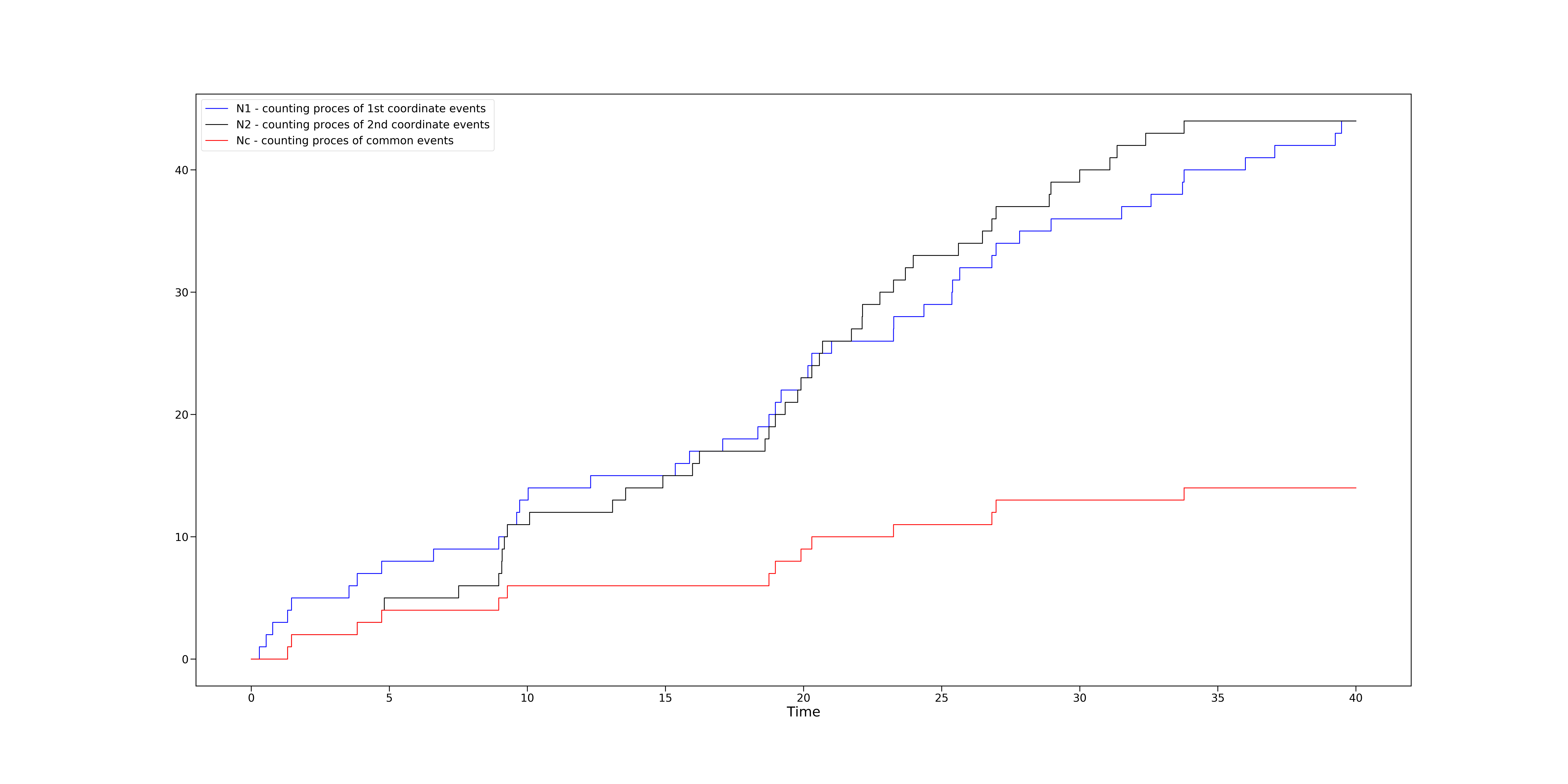}
	\caption{Plot of a single path of counting processes associated with 2-variate Hawkes process.}
\end{figure}

\end{example}

\begin{example}\label{2}$\ $ {\bf Bivariate Hawkes process}
		\medskip

Here we apply our pseudo-algorithm to Example \ref{commonevents} with $d=2$ and $E_1 = E_2 = \bR$.
We let:
\[
\eta_1(t,dy_1) = \alpha_1 \varphi_{\mu_1, \sigma_1} (y_1 ) d y_1,
\quad
\eta_2(t,dy_2) = \alpha_2 \varphi_{\mu_2, \sigma_2} (y_2 ) d y_2,
\]
\[
\eta_1(t,dy_1) = \alpha_c \varphi_{\mu_c, \sigma_c} (y_1 ) \varphi_{\mu_c, \sigma_c} (y_2 ) d y_1 dy_2
\]
where
$\alpha_i \geq 0$, $i \in \set{1,2,c}$, $\varphi_{\mu, \sigma}$ is the one dimensional Gaussian density function with mean $\mu$ and variance $\sigma^2$, and:

\[
	w_{1,i}(t,s) = w_{2,i}(t,s) = w_{c,i}(t,s) = e^{- \beta_i (t-s)}, \qquad i =1,2,c.
\]
Moreover, we set:
\begin{align*}
	 g_{1,1}(x_1) &=g_{1,1},& g_{2,1}(x_1) &= 0 ,& g_{c,1}(x_1) &= g_{c,1}, \\
	 g_{1,2}(x_2) &=0,& g_{2,2}(x_2) &= g_{2,2} ,&g_{c,2}(x_2) &= g_{c,2}, \\
	 g_{1,c}(x) &= 0, & g_{2,c}(x) &= 0, & g_{c,c}(x) &= g_{c,c}, \\
\end{align*}
and we take
\begin{align*}
\phi_1(x,dy_1) &= \1_{E_1 \times \Delta}(x) \varphi_{ a_1x_1, \sigma_1 }(y_1)dy_1
+\1_{\Delta  \times E_2 }(x)  \varphi_{ 0, \sigma_1 }(y_1)dy_1, \\
\phi_2(x,dy_2) &= \1_{\Delta \times E_2}(x)  \varphi_{ a_2x_2, \sigma_2 }(y_2)dy_2 +  \1_{E_1 \times \Delta }(x) \varphi_{ 0, \sigma_1 }(y_1)dy_2, \\
\phi_c(x,d y_1, dy_2) &= \1_{E_1 \times \Delta} (x)  \varphi_{a_c x_1, \sigma_c }(y_1) \varphi_{0, \sigma_c}(y_2)
+
\1_{\Delta \times E_2} (x)  \varphi_{0, \sigma_c} (y_1) \varphi_{a_c x_2, \sigma_c} (y_2) \\
&+
\1_{E_1 \times E_2} (x)  \varphi_{a_c x_1, \sigma_c} (y_1) \varphi_{a_c x_2, \sigma_c} (y_2),
\end{align*}
with the following values of the parameters:
\[
\begin{array}{|c|c|c|c|c|c|c|c|}
\hline
i & \alpha_i & \mu_i  & \sigma_i &  \beta_i  & a_i &  g_{i,i} & g_{c, i}\\ \hline \hline
1 & 0.4 & 2 & 0.16331 & 0.41175 & 0.9 & 0.3 & 0.1 \\ \hline
2 & 0.4 & -2 & 0.16331 & 0.41175 & 0.9 & 0.3 & 0.1 \\ \hline
c & 0.2 & 0 & 0.16331 & 0.81175 & 1.1 & 0.4 & 0.4\\ \hline
\end{array}
\]
A simulated sample path is presented on Figure 3.

\begin{figure}[h]
	\hspace*{-2cm}	%\includegraphics[width=0.5\textwidth]{spiral}
	\includegraphics[width=1.2\textwidth]{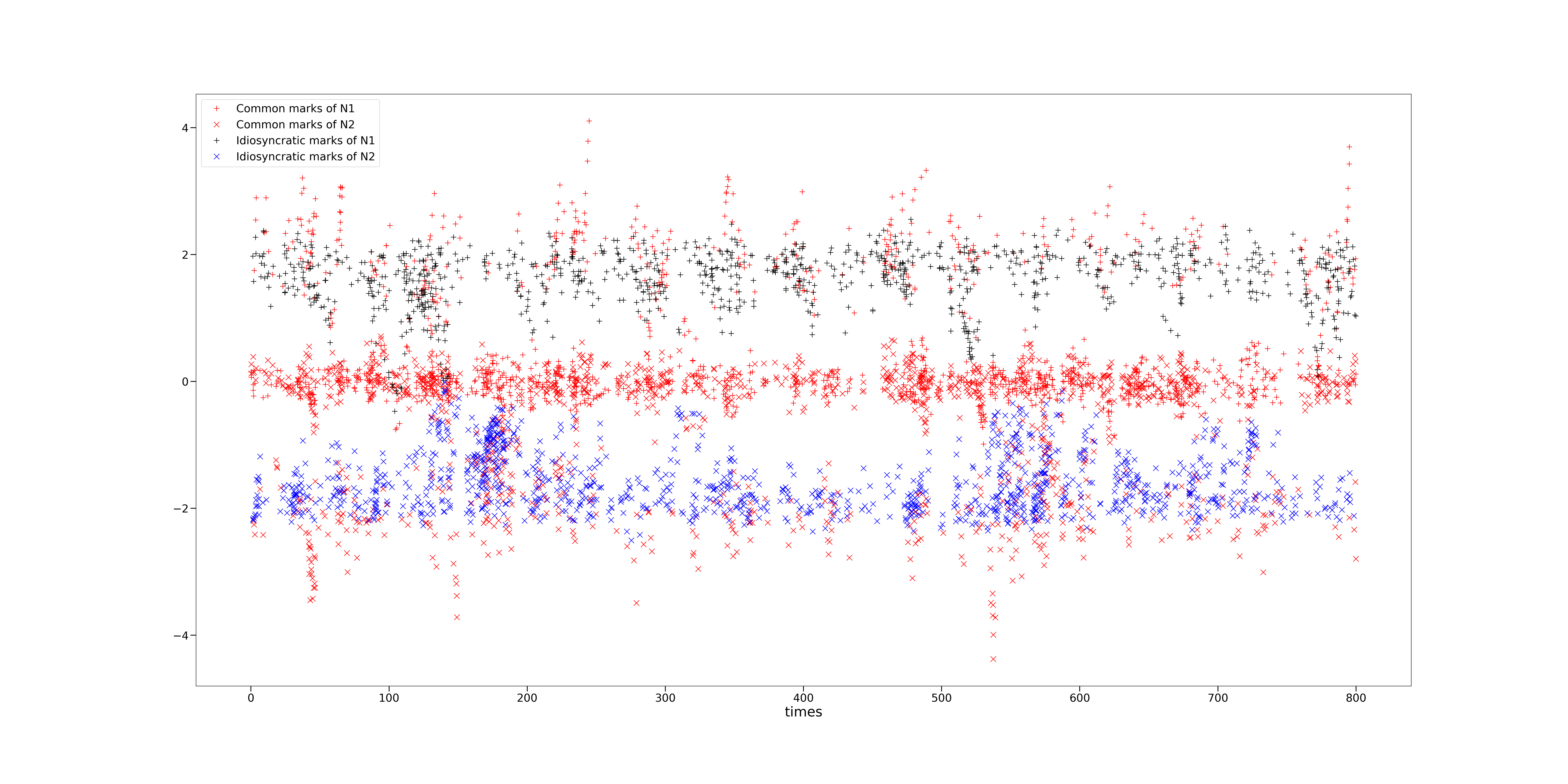}
	\caption{Plot of a simulated path of the bivariate Hawkes process specified in Example \ref{2}.}
\end{figure}
\end{example}

\section{Markovian aspects of a generalized  bivariate Hawkes process}\label{Ex:MarkovHawkes}
An important class	 of  Hawkes processes considered in the literature is the one of Hawkes processes for which the Hawkes kernel is given in terms of exponential decay functions. See, e.g., \cite{Cin2011}, \cite{Oak1975}, \cite{Zhu2013}.  %\cite{lin2009}.
One interesting and useful aspect of such processes is that they can be extended to Markov processes, a feature that we term the {\textit{Markovian aspects} of  a generalized bivariate Hawkes process} .

	{To simplify the presentation, we will discuss Markovian aspects of generalized  bivariate Hawkes processes specified in Example \ref{1}. Using this specification we end up with the Hawkes kernel $\kappa$  of the form:
	\begin{align}\label{eq:kernel-common}
		\kappa(t,dy)
		=& \ \lambda^1_t \delta_{(1,\Delta)} (dy)
		+
		\lambda^2_t \delta_{(\Delta,1)} (dy)
		%\\ \nonumber		&
		+
		\lambda^c_t \delta_{(1,1)} (dy),
	\end{align}
	where, for $i=1,2,c$, we have   $\lambda^i_0 := \eta_i(0)$ and
	\begin{align}
		\lambda^i_t = \  \alpha_i & + ( \lambda^i_0 - \alpha_i ) e^{-\beta_i t} \nonumber
		\\ \label{lambdeczka-i}
		&
		+
		\int_{(0,t) \times E^\Delta } e^{-\beta_i (t-u)}
		\Big(
		\vartheta_{i,1} \1_{ \set{1} \times \Delta}(x)
		\\ \nonumber
		& +  \vartheta_{i,2} \1_{ \Delta \times \set{1} }(x)
		+  \vartheta_{i,c} \1_{ \set{1} \times \set{1}}(x)  \Big) N(du,dx).
	\end{align}
We now refer to canonical space as in Section \ref{sec:existence}, and to the random measure $\nu$ corresponding to $\kappa$ as in \eqref{eq:nu-Hawkes-G}.
	So, using Theorem \ref{prop:main} we see that there exists a unique probability $\bP_\nu$ such that the canonical process $N$ given as in \eqref{eq:NN} is a generalized multivariate Hawkes process  with Hawkes kernel $\kappa$.		

	The coordinates of $N$ (cf. \eqref{N-i}) reduce here to counting (point) processes
	\begin{equation*}
	N^1_t  = N^1((0,t],\set{1})=N((0,t],\set{1} \times \set{1, \Delta}),
	\end{equation*}
	and
	\begin{equation*}
	N^2_t = N^2((0,t],\set{1})=N((0,t],\set{1, \Delta} \times \set{1}  ).
	\end{equation*}

It is straightforward to verify (upon appropriate integration of the kernel $\kappa$ i.e. over $\set{1} \times \set{1, \Delta}$ for $N^1$ and $\set{1, \Delta} \times \set{1}$ for $N^2$) that the $\bF^N$--intensity of process $N^i$, say $\wh \lambda^i$, is given as
		\begin{equation}\label{eq:hatlambda}
			\wh \lambda^i_t=\lambda^i_t+\lambda^c_t,\quad t\geq 0,
			\end{equation}
			$i=1,2$.	Let
\[
\bar{N}^c_t =  [N^1, N^2]_u, \qquad
\bar{N}^1_t =  N^1_u - \bar{N}^c_u, \qquad
\bar{N}^2_t =  N^2_u - \bar{N}^c_u,
\]
where $[N^1, N^2]$ is the square bracket of $N^1$, $N^2$. Then, for $i=1,2,c$, the equality \eqref{lambdeczka-i} can be written as
\begin{align}\label{eq:lambdeczka-i}
\lambda^i_t =  \alpha_i &+ ( \lambda^i_0 - \alpha_i ) e^{-\beta_i t}\\
\nonumber
& + \int_{(0,t)} e^{-\beta_i(t-u)}
\Big(\vartheta_{i,1}
d \bar{N}^1_u
+
\vartheta_{i,2} d \bar{N}^2_u
+
\vartheta_{i,c} d \bar{N}^c_u
\Big)
\end{align}
for $t\geq 0$. This follows from the fact that $[N^1, N^2]$ counts common jumps of $N^1$ and $N^2$, so for $i=1,2$ the process $\bar{N}^i$ is counts the idiosyncratic jumps of $N^i$, that is the jumps that do not occur simultaneously with the jumps of $N^j$, $j\ne i$. In particular, expression \eqref{eq:lambdeczka-i} allows us to give the interpretation of the parameters $\vartheta_{i,j}$, $i,j\in \{1,2,c\}$, namely the parameter $\vartheta_{i,j}$ describes the impact of the jump of the process $\bar{N}^j$ on the intensity of $\bar{N}^i$.

	Now, let us consider a bivariate counting process  $\wt{N} := (N^1,N^2)$. 	Note that we may, and we do, identify process $\wt{N}$ with our bivariate generalized Hawkes process $N$:%\footnote{We recall that for a \cadlag process $X$ we have $\Delta X_t=X_t-X_{t-}$.}
	\[
	T_0 =0, \quad
	T_n = \inf \set{ t > T_{n-1} : \Delta \wt N_t \neq (0,0) },
	\]
	and for $i=1,2$ \[ X^i_n =
	\begin{cases}
	1 & \text{ if } \Delta N^i_{T_n} = 1,\\
	\Delta & \text{ if } \Delta N^i_{T_n} =0.
	\end{cases}
	\]

	Also, note that we may, and we do, identify the process $\wt{N}$  with a random measure $\mu^{\wt N}$ on $\bR_+ \times \wt{E}$,  where $\wt{E} = \set{(1,0),(0,1), (1,1)}$, given by
	\[
	\mu^{\wt N}(dt,dy) =
	\sum_{n \geq 0} \delta_{(T_n, Y_n)}(dt, dy) \1_\set{T_n <\infty} ,\]
	where
	$Y^i_n = \1_\set{X^{i}_n = 1}$. Using \eqref{eq:kernel-common} we see that
	 $\bF^{\wt N}$--compensator of $\mu^{\wt N} $ is given by
	\begin{equation}\label{eq:nueczka}
	\wt{\nu}(dt, dy) = \1_{]\!]0, T_\infty [\![ } \wt{\kappa}(t, dy) dt,
	\end{equation}
	where
	\begin{align}\label{eq:kappeczka}
		\wt{\kappa}(t,dy)
		=& \ \lambda^1_t \delta_{(1,0)} (d y)
		+
		\lambda^2_t \delta_{(0,1)} (d y)
		+
		\lambda^c_t \delta_{(1,1)} (d y).			
	\end{align}
	Thus, we may slightly abuse terminology and call $\wt{N}$ a generalized bivariate Hawkes process.
	
	\begin{theorem}\label{thm:markov}
		Let $N$ be a Hawkes process defined as above. Then
		
		i) The process $\wt N = ( N^1_t, N^2_t)_{t \geq 0}$ is not a Markov process.
		
		ii) The process
		\[Z=(\lambda^1_t, \lambda^2_t, \lambda^c_t, N^1_t, N^2_t)_{t \geq 0}\]
		is a Markov process with the strong generator $A$ acting on $C^\infty_c(\bR_+^5)$ given by
		\begin{align}
		A & v(  \lambda^1, \lambda^2, \lambda^c, n^1, n^2) \label{eq:hawkes-generator} \\ \nonumber
		& =
		\beta_1( \alpha_1 - \lambda^1)  \frac{ \partial }{ \partial \lambda^1 } v(  \lambda^1, \lambda^2, \lambda^c, n^1, n^2)
		+
		\beta_2( \alpha_2 - \lambda^2)  \frac{ \partial }{ \partial \lambda^2 } v(  \lambda^1, \lambda^2, \lambda^c, n^1, n^2)
		\\ \nonumber
		&+
		\beta_c( \alpha_c - \lambda^c)  \frac{ \partial }{ \partial \lambda^c } v(  \lambda^1, \lambda^2, \lambda^c, n^1, n^2)
		\\\nonumber
		%\sum_{j=1}
		&+( v(\lambda^1 + \vartheta_{1,1}, \lambda^2 + \vartheta_{2,1}, \lambda^c + \vartheta_{c,1}, n^1+1, n^2) - v(\lambda^1, \lambda^2, \lambda^c, n^1, n^2) ) \lambda^1
		\\ \nonumber
		&+ ( v(\lambda^1 + \vartheta_{1,2}, \lambda^2 + \vartheta_{2,2}, \lambda^c + \vartheta_{c,2}, n^1, n^2+1) - v(\lambda^1, \lambda^2,\lambda^c, n^1, n^2) ) \lambda^2
		\\ \nonumber
		&+ ( v(\lambda^1 + \vartheta_{1,c}, \lambda^2 + \vartheta_{2,c}, \lambda^c + \vartheta_{c,c}, n^1+1, n^2+1) - v(\lambda^1, \lambda^2,\lambda^c, n^1, n^2) ) \lambda^c.
		\end{align}
				\end{theorem}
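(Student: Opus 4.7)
The proof splits into two parts. For (i), I would show that the future distribution of $\wt N$ given $\cF^{\wt N}_t$ depends not only on $(N^1_t, N^2_t)$ but also on the timing of past jumps, so the Markov property fails. Fix $t > 0$ and consider two events of positive probability on each of which $\wt N_t = (1,0)$: on the first, $N^1$ has its unique pre-$t$ jump at an early time $s_1$; on the second, at a late time $s_2$ close to $t$. By the explicit formula \eqref{lambdeczka-i}, the $\bF^N$-intensity $\hat \lambda^1_t = \lambda^1_t + \lambda^c_t$ of $N^1$ at time $t$ takes different values on these two events, because of the terms $\vartheta_{1,1} e^{-\beta_1(t-s_i)}$ and $\vartheta_{c,1} e^{-\beta_c(t-s_i)}$. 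Consequently, for small $h > 0$, $\bE[N^1_{t+h} - N^1_t \mid \cF^{\wt N}_t]$ assumes different values on the two events; were $\wt N$ Markov, this conditional expectation would be $\sigma(\wt N_t)$-measurable, a contradiction.

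For (ii), my plan is to recognize $Z$ as a piecewise deterministic Markov process. Differentiating \eqref{lambdeczka-i} yields the SDE
\[
d\lambda^i_t = \beta_i(\alpha_i - \lambda^i_t)\, dt + \vartheta_{i,1}\, d\bar N^1_t + \vartheta_{i,2}\, d\bar N^2_t + \vartheta_{i,c}\, d\bar N^c_t, \qquad i \in \{1,2,c\}.
\]
Since the marks $(1,\Delta), (\Delta,1), (1,1)$ in the kernel \eqref{eq:kernel-common} are pairwise disjoint, integrating $\kappa(t,\cdot)$ over each singleton shows that $\lambda^1, \lambda^2, \lambda^c$ are the $\bF^N$-intensities of $\bar N^1, \bar N^2, \bar N^c$ respectively, and that these three counting processes have no simultaneous jumps. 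It follows that the dynamics of $Z$ depend on the past of $N$ only through the current value of $Z_t$: between jumps each $\lambda^i$ follows the deterministic flow $\dot\lambda^i = \beta_i(\alpha_i - \lambda^i)$ while $N^1, N^2$ are constant, and at each jump time the state is updated deterministically according to the type of the arriving jump.

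To extract the generator and conclude the Markov property, I would apply It\^o's change-of-variable formula for pure-jump semimartingales to $v(Z_t)$ for $v \in C_c^\infty(\bR_+^5)$. The absolutely continuous part contributes the three drift terms in \eqref{eq:hawkes-generator} involving $\partial/\partial\lambda^i$, while the three compensated jump contributions (one per $\bar N^j$) yield the three differences of $v$ multiplied by the respective intensities $\lambda^1, \lambda^2, \lambda^c$. This produces the identity $v(Z_t) - v(Z_0) - \int_0^t Av(Z_s)\, ds = M^v_t$ with $M^v$ an $\bF^N$-local martingale; compact support of $v$ together with non-explosion of the linear bivariate Hawkes process (ensured by boundedness of the exponential decay kernels and a standard Gr\"onwall estimate on $\bE N_t$, in the spirit of Example \ref{ex:clasicHawkes}) upgrade $M^v$ to a genuine martingale. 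Uniqueness of solutions to the martingale problem associated with $A$ on $C_c^\infty(\bR_+^5)$, which holds in the present PDMP setting by standard results, then gives the Markov property of $Z$ and identifies $A$ with its strong generator.

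The main obstacle is the compensator identification for the idiosyncratic and common counters: one must verify that the jumps of $\bar N^1, \bar N^2, \bar N^c$ are pairwise disjoint (equivalently, that $[N^1, N^2] = \bar N^c$), so that the $\bF^N$-compensators separate cleanly into $\lambda^1, \lambda^2, \lambda^c$. This follows from the construction in Example \ref{1}, where the disjoint marks $(1,\Delta), (\Delta,1), (1,1)$ force the three event types to occur at distinct times. Once this is in hand, checking the martingale identity via It\^o's formula is routine, and invariance of $\bR_+^5$ under the flow (the drift pulls each $\lambda^i$ toward $\alpha_i \ge 0$ and the jumps add the non-negative quantities $\vartheta_{i,j}$) guarantees that $Av$ is well-defined on $C_c^\infty(\bR_+^5)$.
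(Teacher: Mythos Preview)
Your overall strategy for (ii) matches the paper's: derive the SDE $d\lambda^i_t = \beta_i(\alpha_i-\lambda^i_t)\,dt + \sum_j \vartheta_{i,j}\,d\bar N^j_t$, apply It\^o's formula to obtain that $M^v_t = v(Z_t)-v(Z_0)-\int_0^t Av(Z_s)\,ds$ is a local martingale, upgrade to a true martingale, and conclude via well-posedness of the martingale problem for $A$. The substantive difference is in that last step. You invoke PDMP theory and ``standard results'' without naming them; the paper instead recognises $A$ as the generator of a regular affine process by checking that its parameters are admissible in the sense of Duffie--Filipovi\'c--Schachermayer, and then combines their existence/uniqueness theorem for the affine semigroup with Theorem~4.4.1 of Ethier--Kurtz to conclude Markovianity. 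The affine route is advantageous here precisely because the jump rates are linear (hence unbounded) in the state, which is the regime that theory is designed for; a PDMP reference would have to be checked for unbounded rates, and this is where your non-explosion argument becomes genuinely needed (for Davis-type PDMP theory), rather than for the local-to-true-martingale step, where compact support of $v$ already forces $Av$ to be bounded. For (i), you give a hands-on argument exhibiting two $\cF^{\wt N}_t$-events with the same value of $\wt N_t$ but different $\bF^N$-intensities, hence different conditional increments; the paper instead observes that the compensator $\wt\nu$ depends on the full past path and cites a theorem of He--Wang characterising when a point process is Markov. Your argument is more self-contained, though it requires the routine check that both conditioning events carry positive probability.
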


	\begin{proof}
	%Then, we note that the $(\bF^N,\bP_\nu)$--intensity of $N^i$ takes the form
	%	\begin{align*}
	%		\lambda^i_t + \lambda^c_t = \ &    \alpha_i  + \alpha_c + ( \lambda^i_0 - \alpha_i ) e^{-\beta_i t}
	%		+ ( \lambda^c_0 - \alpha_c ) e^{-\beta_c t}
	%		\\
	%		& +
	%		\int_0^t (\vartheta_{i,1} e^{-\beta_i (t-u)}  + \vartheta_{c,1} e^{-\beta_c (t-u)}   ) d \bar{N}^1_u
	%		\\
	%		& + \int_0^t (\vartheta_{i,2} e^{-\beta_i (t-u)} + \vartheta_{c,2} e^{-\beta_c (t-u)}   )  d \bar{N}^2_u
	%		\\
	%		&+
	%		\int_0^t (\vartheta_{i,c} e^{-\beta_i (t-u)} + \vartheta_{c,c} e^{-\beta_c (t-u)} )   d \bar{N}^c_u
	%	\end{align*}	
	%	for $t\geq 0$.
	%The above observation follows from the fact that $[N^1, N^2]$ counts common jumps of $N^1$ and $N^2$, so $\bar{N}^i$ is counts jumps on $i$-coordinate which do not occur simultaneously.
	
	%Using the same reasoning \trb{what reasoning?} we  see that
	%This allows us to give the interpretation of constants $\vartheta_{i,j}$, $i,j\in \{1,2,c\}$.
	%		The $\vartheta_{i,j}$ describes the impact of the jump of the process $\bar{N}^j$ on the intensity of $\bar{N}^i$.
	%		
	%		\mr{Dopisac podobna interpretacje wczesniej ? JAK SIE DA}
	i)
	From \eqref{eq:lambdeczka-i} and \eqref{eq:kappeczka} we see that for any $t>0$  the quantity $\widetilde \nu(dt,dy)$ given in \eqref{eq:nueczka}  depends on the entire path of $\wt N$ until time $t$. Thus, by Theorem 4 in \cite{HeWan1984}, the process $\wt{N}$ is not a Markov process.
	
	ii)
	First note that \eqref{eq:lambdeczka-i} can be written as
		\begin{align*}\
			\lambda^i_t -  \alpha_i & =
			 e^{-\beta_i t} \bigg( \lambda^i_0 - \alpha_i  + \int_{(0,t)} e^{\beta_i u}
			\Big(\vartheta_{i,1}
			d \bar{N}^1_u
			+
			\vartheta_{i,2} d \bar{N}^2_u
			+
			\vartheta_{i,c} d \bar{N}^c_u
			\Big)  \bigg).
		\end{align*}
	Hence using stochastic integration by parts one can show that $\lambda^i$ %given by  \eqref{eq:lambdeczka-i}
	can be represented as
	\[
	\lambda^i_t = \lambda^i_0 + \int_0^t \beta_i( \alpha_i - \lambda^i_u) du + \int_{(0,t)} \Big( \vartheta_{i,1}
	d \bar{N}^1_u +
	\vartheta_{i,2} d \bar{N}^2_u +
	\vartheta_{i,c} d \bar{N}^c_u \Big).
	\]
	This and \eqref{eq:kappeczka} implies that the  process $Z$ is an $\bF^Z$--semimartingale with characteristics (with respect to cut-off function $ h(x) = x \1_{|x|<1}$)
	\[
	B_t = \int_0^t b_u du,
	\quad
	C_t  =0_{4 \times 4},
	\]
	\[
	\nu(dt, dy_1, dy_2, dy_c, d z_1, d z_2)
	=
	\nu_t (dy_1, dy_2, dy_c, d z_1, d z_2)dt,
	\]
	where
	%\[
	%b_t
	%:=
	%\left(
	%\begin{array}{c}
	%\beta_1( \alpha_1 - \lambda^1_{t-})   \\
	%\beta_2( \alpha_2 - \lambda^2_{t-})   \\
	%\beta_c( \alpha_c - \lambda^c_{t-})   \\
	%0 \\
	%0 \\
	%\end{array}
	%\right),
	%\]
	\[
	b_t
	:=\left(
	\beta_1( \alpha_1 - \lambda^1_{t-})   , \
	\beta_2( \alpha_2 - \lambda^2_{t-})   , \
	\beta_c( \alpha_c - \lambda^c_{t-})   , \
	0, \ 0
	\right)'
	\]
	and
	\begin{align}\label{eq:miarahawksa}
		&\nu_t(dy_1,dy_2, dy_c, d z_1, d z_2)
		\\ \nonumber
		&:=
		\lambda^1_{u-} \delta_{(\vartheta_{1,1},\vartheta_{2,1},\vartheta_{c,1}, 1,0)}(dy_1, dy_2, dy_c, d z_1, d z_2)
		\\ \nonumber
		& \quad +
		\lambda^2_{u-} \delta_{(\vartheta_{1,2},\vartheta_{2,2},\vartheta_{c,2}, 0,1)}(dy_1, dy_2, dy_c, d z_1, d z_2)
		\\ \nonumber
		& \quad +
		\lambda^c_{u-} \delta_{(\vartheta_{1,c},\vartheta_{2,c},\vartheta_{c,c}, 1,1)}(dy_1, dy_2, dy_c, d z_1, d z_2).
	\end{align}	
	This, by Theorem II.2.42 in \cite{js1987}, implies that for any function  $v\in C^2_b(\bR^5)$ the process $M^v$ given as
\begin{align*}
	M^v_t &= v(Z_t)
	- \int_0^t A v(Z_u) du
	\\
	&= v(\lambda^1_t,\lambda^2_t, \lambda^c_t, N^1_t, N^2_t)
	- \int_0^t A v(\lambda^1_u,\lambda^2_u, \lambda^c_u, N^1_u, N^2_u) du,\quad t \geq 0,
\end{align*}	
	is an $\bF^Z$--local martingale. Hence, for any $v\in C^\infty_c(\bR^5)$ the process defined above is a martingale under $\bP$, since
	$v$ and  $A v$ are bounded, which follows from the fact that $v\in C^\infty_c(\bR^5)$ has compact support, and thus the local martingale $M^v$ is a martingale for such $v$.
	Consequently, the process $Z$ solves martingale problem for $(A,\rho)$, where $\rho$ is the deterministic initial distribution~of~$Z$, that is $ \rho(dz) = \delta_{Z_0}(dz)$.

	We will now verify that $Z$ is a Markov process with generator $A$ given in \eqref{eq:hawkes-generator} using Theorem 4.4.1 in  \cite{ethkur1986}.

	For this, we first observe that  parameters determining $A$, i.e.
	\[
	\cI = \set{1, \ldots, 5}, \ \cJ = \varnothing , \ a=0, \ \alpha=0,\ c=0, \ \gamma=0, \ m=0,
	\]
	\[
	b= (\alpha_1 \beta_1, \alpha_2 \beta_2, \alpha_3 \beta_3, 0,0)', \quad \beta=\mathop{diag}(-\beta_1,-\beta_2,-\beta_c,0,0 ),
	\]
	\[
	\mu_1= \delta_{(\vartheta_{1,1},\vartheta_{2,1},\vartheta_{c,1}, 1,0)},
	\quad
	 %(dy_1, dy_2, dy_c, d z_1, d z_2)$,
	\mu_2 = \delta_{(\vartheta_{1,2},\vartheta_{2,2},\vartheta_{c,2}, 0,1)},  %(dy_1, dy_2, dy_c, d z_1, d z_2)$,
	\]
	\[\mu_3= \delta_{(\vartheta_{1,c},\vartheta_{2,c},\vartheta_{c,c}, 1,1)},
	\quad
	 \mu_4= \mu_5=0,\] %(dy_1, dy_2, dy_c, d z_1, d z_2)$
	are admissible in the sense of Definition 2.6 in  \cite{duffilsch2003}.
	
	Thus, invoking Theorem 2.7 in \cite{duffilsch2003} we conclude that there exists a unique regular affine semigroup $(P_t)_{t\geq 0 }$ with infinitesimal generator $A$ given by \eqref{eq:hawkes-generator}. Hence,  there exists a unique regular affine process with generator $A$ and with transition function $P$ defined by $(P_t)_{t\geq 0 }$.
	Since $A$ is a generator of regular affine process it satisfies the Hille-Yosida conditions (cf. Theorem 1.2.6 in \cite{ethkur1986}) relative to the Banach space $B(\bR^5)$ of real valued, bounded and measurable functions on $\bR^5$. Moreover, from Corollary 1.1.6 in \cite{ethkur1986} it follows that $A$ is a closed operator.
	Now, using Theorem 4.4.1 in  \cite{ethkur1986} we obtain  that $Z$ is a Markov process with generator $A$. Moreover, $P$ is the  transition function of $Z$.			
\end{proof}

Let us note that using analogous argument as in the proof of Theorem \ref{thm:markov} we can prove that the process $Y^1:=(\lambda^1_t+\lambda^c_t, N^1_t)_{t \geq 0}$ is a Markov process in filtration $\mathbb{F}^{Z}$ provided that %$i\in\set{1,2}$ we assume that
parameters of $\lambda^k$, $k\in \set{1,c}$, satisfy
\begin{align*}%\label{eq:MarkowHawkes-ii}
	%\vartheta_{i,j} =  0 \quad j \in \set{1,2} \setminus i \quad and  \,\quad \vartheta_{c,k} =  0 \quad  k \in \set{1,2},
	\vartheta_{1,2} = \vartheta_{c,2}= 0,\quad
    \beta_1 = \beta_c,    \quad  \vartheta_{1,c} + \vartheta_{c,c} = \vartheta_{1,1} + \vartheta_{c,1}.
	\end{align*}	
	Analogous statement is valid for $Y^2:=(\lambda^2+\lambda^c, N^2)$.

\section{Applications}\label{sec:app}

The are numerous potential applications of the generalized  multivariate Hawkes processes. Here we present a brief description of possible applications in seismology, in epidemiology and in finance.

\subsection{Seismology}\label{sec:seismology}

In the Introduction to \cite{Ogata1999} the author writes:

\medskip
``Lists of earthquakes are published regularly by the seismological services of
most countries in which earthquakes occur with frequency. These lists supply at
least the epicenter of each shock, focal depth, origin time and instrumental
magnitude.

Such records from a self-contained seismic region reveal time series of extremely
complex structure. Large fluctuations in the numbers of shocks per time unit,
complicated sequences of shocks related to each other, dependence on activity in
other seismic regions, fluctuations of seismicity on a larger time scale, and changes
in the detection level of shocks, all appear to be characteristic features of such
records. In this manuscript the origin times are mainly considered to be modeled by
point processes, with other elements being largely ignored, except that the ETAS
model
 and its extensions use data of magnitudes and epicenters.''

\medskip
In particular, the dependence on (simultaneous) seismic activity in other seismic regions has been ignored in the classical univariate ETAS\footnote{The Epidemic-type Aftershock-sequences Model} model, and in all other models that we are aware of.

The ETAS model is a univariate self-exciting point process, in which the shock intensity at time $t$, corresponding to a specific seismic location,  is designed as (cf. Equation (17) in \cite{Ogata1999})
\begin{equation}\label{eq:Og19}
\lambda(t|H_t)=\mu +\sum_{t_m<t}\frac{K_m}{(t-t_m+c)^p}.
\end{equation}
In the above formula, $H_t$ stands for the history of after-shocks at the given location, $\mu$ represents the background occurrence rate of seismic activity at his location, $t_m$s are the times of occurrences of all after-shocks that took place prior to time $t$ at the specific seismic location, and
\[K_m=K_0e^{\alpha(M_m-M_0)},\]
where $M_m$ is the magnitude of the shock occurring at time $t_m$, and $M_0$ is the cut-off magnitude of the data set; we refer to \cite{Ogata1999} for details. As said above, dependence between (simultaneous) seismic activity in different seismic regions has been ignored in the classical univariate ETAS model.

 Below we suggest a possible method to  construct a generalized multivariate Hawkes process that may offer a good way of  modeling of joint seismic activities at various locations, accounting for dependencies between seismic activities at different locations and for consistencies with local data.

We will now briefly describe this construction that leads to a plausible model, which we name the {\it multivariate generalized ETAS model}. Towards this end we consider a GMHP $N$ (cf. Definition \ref{def:multiHawkesi}), where the index $i=1,\ldots,d$ represents the $i$-th seismic location, and where the set $E_i=\mathfrak{M}_i:=\set{m_1,m_2,\ldots,m_{n_i}}$ of marks is a discrete set whose elements represent possible magnitudes of seismic shocks with epicenter at location $i$. In the corresponding Hawkes kernel $\kappa$ the measure $\eta(t,dy)$ represents the time-$t$  background distribution of shocks' across all seismic regions, and the measure $f(t,s,dy,x)$ represents the feedback effect.

For the purpose of illustration, let $d=2$. Suppose that local seismic data are collected for each location to the effect of producing local kernels of the form

\begin{align}\label{kappa1-ETAS}
 \kappa^i(t,\set{y_i})
 &= \chi_i(t,\set{y_i}) 	\nonumber \\
 &+  \int_{(0,t)\times E_1} h_{i,1}(t,s,x_1,\set{y_i}) N^{idio,1}(ds,dx_1)  \\ \nonumber	&+  \int_{(0,t)\times E_2} h_{i,2}(t,s,x_2,\set{y_i})  N^{idio,2}(ds,dx_2)  \\ \nonumber		
 &+  \int_{(0,t)\times E_1\times E_2}  h_{i,c}(t,s,x,\set{y_i}) N(ds,dx), \quad i=1,2.
 \end{align}

In particular, the quantity $\lambda^i(t):=\kappa^i(t,E_i)=\sum_{y_i\in \mathfrak{M}_i}\kappa^i(t,y_i)$ represents the time-$t$ intensity of seismic activity at the $i$-th location.

In order to produce an ETAS type model, we postulate that
\[\sum_{y_i\in \mathfrak{M}_i}h_{i,j}(t,s,x_j,\{y_i\})=\frac{K_{i,j,0}e^{\alpha_{i,j}(x_j-x_{j,0})}}{(t-s+c)^{p_{i,j}}},\]
for $j=1,2$ and
\[\sum_{y_i\in \mathfrak{M}_i}h_{i,c}(t,s,x,\{y_i\})=\frac{K_{i,c,0}e^{\alpha_{i,c}[(x_1-x_{1,0})+(x_2-x_{2,0})]}}{(t-s+c)^{p_{i,c}}}.\]

Thus,
\begin{align}
\lambda^i(t)&=\sum_{y_i\in \mathfrak{M}_i} \Bigg (\chi_i(t,\{y_i\}) + \sum_{j=1}^2\sum_{t_{j,m}<t}\frac{K_{i,j,0}e^{\alpha_{i,j}(X_{j,t_{j,m}}-x_{j,0})}}{(t-s+c)^{p_{i,j}}} \nonumber \\
&\quad + \sum_{t_{c,m}<t}\frac{K_{i,c,0}e^{\alpha_{i,c}[(X_{1,t_{c,m}}-x_{1,0})+(X_{2,t_{c,m}}-x_{2,0})]}}{(t-s+c)^{p_{i,c}}}\Bigg ),
\end{align}
where
\begin{itemize}
\item $t_{j,m}$s are the times of occurrences of  after-shocks that took place prior to time $t$ only at the $i$-th seismic location, and $X_{j,t_{j,m}}$ is the magnitude of the after shock at location $i$ that took place at time $t_{j,m}$;
\item $t_{c,m}$s are the times of occurrences of  after-shocks that took place prior to time $t$ both seismic locations, and $X_{j,t_{c,m}}$ is the magnitude of the after shock at location $i$ that took place at time $t_{c,m}$.
\end{itemize}

The classical univariate ETAS model has been extended in  \cite{Ogata1998} to the (classical) univariate space-time ETAS model (see also Section 5 in \cite{Ogata1999}).
It is important to note that our  generalized multivariate Hawkes process may also be used as an useful generalization  of the space-time extension of the multivariate generalized ETAS model. In order to see this, let us consider the model (2.1) in \cite{Ogata1998} with $g$ as in Section 2.1 in \cite{Ogata1998}, that is (in the original notation of \cite{Ogata1998}, which should not be confused with our notation)

\begin{equation}\label{eq:timespaceseism}
\lambda(t,x,y|H_t)=\mu(x,y)+\int_0^t\int\int_{A}\int_{M_0}^\infty g(t-s,x-\xi,y-\eta;M)N(ds,d\xi,d\eta,dM).
\end{equation}

Then, coming back to our generalized multivariate Hawkes process, let the seismic location $i=1,2$ be identified with a point in the plane with coordinates $(a_i,b_i)\in \R^2$. Next, let the set of marks $E_i$ be given as
\begin{equation}\label{eq:spacemarks}
E_i:=D_i\times \mathfrak{M}_i,
\end{equation}
where $D_i=[a_i-a'_i,b_i-b'_i]\times [a_i+a''_i,b_i+b''_i]$ for some positive numbers $a'_i,a''_i,b'_i,b''_i.$ This will lead to a space-time generalized multivariate Hawkes process that will be studied elsewhere.

\subsection{Epidemiology}

It was already observed by Hawkes in \cite{Hawkes1971} that Hawkes processes may find applications in epidemiology for modeling spread of epidemic diseases accounting for various types of cases, such as children or adults, that can be taken as marks. This insight has been validated over the years in numerous studies. We refer for example to \cite{SHH2019,RMKCX2018,KELLY2019100354} and the references therein.

It is important to account for the temporal and spatial aspects in the modeling of spread and intensity of epidemic and pandemic diseases, such as COVID-19. We believe that
the variant of the generalized multivariate Hawkes process that we described at the end of Section \ref{sec:seismology} may offer a valuable tool in this regard. This will be investigated in a follow-up work.

\subsection{Finance}

Hawkes processes have found important applications in finance over the past two decades. We refer to \cite{Haw2017} for a relevant survey.  Here, we briefly discuss a possible application in finance of the generalized multivariate Hawkes processes.

In a series of papers \cite{BacDelHofMuz2013}, \cite{BacMuz2014}, \cite{BDHM} introduced a multidimensional model for stock prices driven by (multivariate) Hawkes processes. The  model for stock prices is formulated in \cite{BacDelHofMuz2013} via a marked point process $N= (T_n,Z_n)_{ n \geq 1 }$, where $Z_n$  is a random variable taking values in $\set{1, \ldots, 2d}$, and the compensator $\nu$ of $N$ has the form (it is assumed that $T_\infty = \lim_{n\rightarrow \infty}T_n=\infty$)
\[
\nu(dt,dy) = \sum_{i=1}^{2d} \delta_{i}(dy) \lambda_i(t) dt,
\]
where
\begin{equation*}
\lambda_i(t) = \mu_i +  \sum_{j=1}^{2d} \int_{(0,t)}  \phi_{i,j}(t-s) N(ds \times \set{j} ), \quad t\geq 0,
\end{equation*}
with $\mu_i \in \bR_+$ and  functions $\phi_{i,j}$ from $\bR_+$ to $\bR_+$.
Let us define the processes $N^i,$ $i=1,\ldots 2d$, by
\[
N^i((0,t] ) = \sum_{n \geq 1 } \1_{\set{T_n \leq t} \cap \set{Z_n =i} } , \quad t\geq 0.
\]
Note that the above implies that $N^1, \ldots, N^{2d}$ have no common jumps and
the $\bF^N$-intensity  of $N^i$ is given by $\lambda_i$  and  can be written in the form
\begin{equation*}
\lambda_i(t) = \mu_i +  \sum_{j=1}^{2d} \int_{(0,t)}  \phi_{i,j}(t-s) N^j(ds), \quad t\geq 0.
\end{equation*}

In \cite{BacDelHofMuz2013} it is assumed that a $d$-dimensional vector of assets prices  $S=(S^1, \ldots, S^d)$ is based on $N$ via representation
\[
S^i_t  = N^{2i-1}((0,t]) - N^{2i}((0,t]), \quad t \geq 0, \, i=1, \ldots, d.
\]
The obvious interpretation is that $N^{2i-1}$ corresponds  to an upward jump of the i-th asset whereas $N^{2i}$ corresponds  to an downward jump of i-th asset.
Bacry et.al. \cite{BacDelHofMuz2013} showed that within such framework some stylised facts about high frequency data,  such as microstructure noise and the Epps effect, are  reproduced.

Using the  GMHPs we can easily generalize their model in several directions. In particular, a model of stock price movements driven by a generalized multivariate Hawkes process  $N$ allows for common jumps in upward and/or downward direction. This can be done by setting the multivariate mark space of $N$ to be
\[
E^\Delta = \set{ e=  ( e_1, \ldots, e_{2d}) : e_i \in \set{1, \Delta} } \setminus \set{(\Delta, \ldots, \Delta )},
\]
and the $\mathbb{F}^N$-compensator of $N$ to be
\[
\nu(dt,dy) = \1_{]\!] 0, T_\infty[\![}( t)\sum_{e \in E^\Delta } \delta_{e}(dy) \lambda_e(t) dt,
\]
where
\[
\lambda_e(t) = \mu_e +  \int_{E^\Delta \times (0,t)}  \phi_{e,x}(t-s) N(ds \times dx), \quad e \in E^\Delta,\quad t\geq 0,
\]	
and where $\mu_e \in \bR_+$ and $\phi_{e,x}$ is a function from $\bR_+$ to $\bR_+$.

Including possibility of embedding co-jumps of the prices of various stocks in the book in the common excitation mechanism, may turn out to be important in modeling the book evolution in general, and in pricing basket options in particular.

\section{Appendix}
	In this appendix  we provide some auxiliary concepts and results that are needed in the rest of the paper.

\subsection{Conditional Poisson random measure: definition and specific construction}
	Let $(\Omega,\cF,\bP)$  be  a probability space and $(\mathcal{X},\mathbfcal{X} )$ be a Borel space. For a given sigma field $\cG \subseteq \cF$, we define  a $\cG$-conditionally Poisson random measure on $(\bR_+ \times \mathcal{X}, \cB(\bR_+) \otimes \mathbfcal{X})$ as follows:
	\begin{definition}
		Let $\nu$ be a $\sigma$-finite random measure on $(\bR_+ \times \mathcal{X}, \cB(\bR_+) \otimes \mathbfcal{X})$.
		A random measure $N$ on $(\bR_+ \times \mathcal{X}, \cB(\bR_+) \otimes \mathbfcal{X})$ is  a $\cG$-conditionally Poisson random measure with intensity measure $\nu$ if the following two properties are satisfied:
		\begin{enumerate}
			\item For every $C \in \cB(\bR_+) \otimes \mathbfcal{X}$ such that $\nu(C) < \infty$, we have
			\[
			\bP( N(C) = k| \cG)
			= e^{-\nu(C)}
			\frac{(\nu(C))^k}{k!}.
			\]
			\item For arbitrary $n=1,2,\ldots,$ and arbitrary disjoint sets $C_1, \ldots, C_n $ from $ \cB(\bR_+) \otimes \mathbfcal{X}$, such that $\nu(C_m) < \infty$, $m=1,\ldots,n$, the random variables
			\[
			N(C_1), \ldots, N(C_n)
			\]
			are $\cG$-conditionally independent.
		\end{enumerate}
	\end{definition}

	Clearly $\nu$ is $\cG$-measurable. 	Note that if $\cG$ is trivial $\sigma$-field (or if $N$ is independent of $\cG$), then $N$ is a Poisson random measure (see Chapter 4.19 in \cite{Sato2013}), which sometimes referred to as the Poisson process on $\bR_+ \times \mathcal{X}$ (see e.g. \cite{Kal2002})}. In this case $\nu$ is a deterministic $\sigma$-finite measure. For $\cG=\sigma(\nu)$, the $\sigma(\nu)$-conditional Poisson random measure is also known in the literature as Cox process directed by $\nu$ (see \cite{Kal2002}).

 Now we will provide a construction of a $\cG$-conditional Poisson random measure with the intensity measure given in terms of a specific kernel $g$. In fact, the measure constructed below is supported on sets from $\cB((0,T]) \otimes \mathbfcal{X}$, in the sense that for any set $C$ that has an empty intersection with $(0,T]\times \mathcal{X}$ the value of the measure is $0$ almost surely.

We begin by letting $
	g(t,y,dx)$ be a finite kernel from $(\bR_+ \times \mathcal{Y}, \cB(\bR_+) \otimes \mathbfcal{Y})$ to $(\cX, \mathbfcal{X})$, where $(\cY, \mathbfcal{Y})$ and $(\cX, \mathbfcal{X})$ are Borel  spaces,
	satisfying
	\begin{equation}\label{eq:gass}
		g(t,y,\mathcal{X}) = 0\quad \textrm{for}\quad  t > T.
	\end{equation}
	 Next, let $\partial$ be an element external to $\mathcal{X}$, and define kernel $g^\partial$ from $(\bR_+ \times \mathcal{Y}, \cB(\bR_+) \otimes \mathbfcal{Y})$ to $(\cX^\partial, \mathbfcal{X}^\partial)$ as
	\[
	g^\partial(t,y,dx) = \lambda(t,y)\gamma(t,y,dx),
	\] 	
	where
	\[
	\lambda(t,y)= g(t,y, \mathcal{X}), \quad
	\gamma(t,y,dx)=\frac{g(t,y, dx)}{g(t,y, \mathcal{X})} \1_\set{g(t,y, \mathcal{X}) > 0} + \delta_\partial (dx) \1_\set{g(t,y, \mathcal{X}) = 0}.
	\]	
	Suppose that
	\[
	\sup_{t \in [\ell(y),T] } \lambda(t,y) \leq \widehat{\lambda}(y)  <\infty,
	\qquad
	\gamma(t,y,A) = \int_{{(0,1]}} \1_A(\Gamma(t,y,u)) du, \quad A \in \mathbfcal{X},
	\]
for some measurable mappings $\ell : \cY \rightarrow [0,T] \cup \set{ \infty }$, $\widehat{\lambda}: \cY \rightarrow (0, \infty)$ and $\Gamma: \bR_+ \times \cY \times {(0,1]} \rightarrow \mathcal{X} $. Existence of such mapping $\Gamma$ is asserted by Lemma 3.22 in \cite{Kal2002}. In addition, let $D:[0, \infty) \times {(0,1]} \rightarrow \bN$ be as in Step 1 of our construction done in Section \ref{sec:constr}.
	
	Next, take   $Y$ to be a  $(\cY,\mathbfcal{Y})$-valued random element, which is  $\cG$-measurable, and let
	$Z$ and $(U_m, V_m, W_m)_{m=1}^\infty$ be independent random variables uniformly distributed on ${(0,1]}$  and independent of $\cG$. We now define a random measure $N$ on $(\bR_+ \times \mathcal{X}, \cB(\bR_+) \otimes \mathbfcal{X})$ as
	\begin{align}\label{eq:Ngen}
	N (dt,dx) &= \sum_{m =1}^{\infty } \delta_{(T_m, X_m)} (dt,dx) \1_\set{\ell(Y) < T, \, i \leq P, \, A_m \leq \lambda(T_m, Y) }
	\\ \nonumber
	&=\sum_{m =1}^P \delta_{(T_m, X_m)} (dt,dx) \1_\set{\ell(Y) < T, \, A_m \leq \lambda(T_m, Y) },
	\end{align}
	where $P$, $(T_m,A_m, X_m)_{m=1}^{\infty}$
	are  random variables defined by transformation of the sequence
	$Z$,$(U_m, V_m, W_m)_{m=1}^\infty$ and the random element $Y$  in the following way:

	\begin{align}\label{eq:Pkp1j-new}
	P &= D\big((T - \ell(Y)) \wh \lambda(Y) \1_\set{ \ell(Y) < T}, Z \big)
	\\
	\nonumber
	& = D\big((T - \ell(Y)) \wh \lambda(Y) ,Z\big) \1_\set{ \ell(Y) < T},
	\\
	\nonumber
	T_m &= \big(\ell(Y) + (T-\ell(Y))U_m \big)\1_\set{ \ell(Y) < T} + {\infty \1_\set{ \ell(Y) \geq T }},
	\\
	\nonumber
	A_m &=\widehat{\lambda}(Y) V_m \1_\set{ \ell(Y) < T}, %+ \infty \1_\set{ S^k_j =\infty},
	\\
	\nonumber
	X_m &= \left\{
          \begin{array}{ll}
            \Gamma(T_m, Y, W_m), & \hbox{if $\ell(Y) < T$,} \\
            \partial, & \hbox{if {$\ell(Y) \geq T$}.}
          \end{array}
        \right.
 	\end{align}

Using the above set-up we see that, for each $m=1,2,\ldots,$
\begin{align}
\nonumber
&\bP((T_m, A_m, X_m)\in dt\times da \times dx | \cG)\\
\label{eq:distTAXi}
&=	
\1_\set{ \ell(Y) < T }\frac{1}{(T - \ell(Y))\wh \lambda(Y) }\1_{(\ell (Y),T]\times (0,\wh \lambda(Y) ]}(t,a) \gamma(t,Y,dx) dt da
\\ \nonumber
& \quad +\1_\set{ \ell(Y) \geq T} \delta_{(\infty,0,\partial)}(dt,da,dx),
\end{align}
where $\delta_{(\infty,0,\partial)}$ is a Dirac measure.

Note that even though the random elements $X_m$, $m=1,2,\ldots,$ may take value $\partial$, the measure $N$ given in \eqref{eq:Ngen} is a random measure on $(\bR_+ \times \mathcal{X}, \cB(\bR_+) \otimes \mathbfcal{X})$ having support belonging to $\cB([0,T]) \otimes \mathbfcal{X}$.

Given the above, we now have the following result.
	\begin{lemma}\label{lem:Gcprm}
		The random measure $N$ defined by \eqref{eq:Ngen} is a $\cG$-conditionally Poisson random measure with intensity measure $\nu$ given by
		\begin{equation}\label{eq:nu-int}
		{\nu(C) = \int_C g(v, Y, dx) \1_{ {(} \ell(Y), {\infty  )}} (v) dv}, \qquad C \in \cB(\bR_+) \otimes \mathbfcal{X}.
		\end{equation}
 	\end{lemma}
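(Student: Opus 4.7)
The plan is to verify the two defining properties of a $\cG$-conditionally Poisson random measure by reducing, via conditioning on $\cG$, to a classical Poisson thinning-and-mapping argument for a deterministic parameter $y$. Since $Y$ is $\cG$-measurable while $Z$ and $(U_m, V_m, W_m)_{m\geq 1}$ are independent of $\cG$ and uniformly distributed on $(0,1]$, a regular conditional probability $\bP(\,\cdot\mid\cG)$, which exists because $(\cY,\mathbfcal Y)$ is Borel, amounts to freezing $Y$ at its (random) value while preserving the distribution of the auxiliary variables. It therefore suffices to fix $y\in\cY$, prove the claim pathwise in $y$, and then unconditon.

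With $y$ held fixed, the case $\ell(y)\geq T$ is immediate: \eqref{eq:Pkp1j-new} forces $P=0$ and $T_m=\infty$ for all $m$, so $N\equiv 0$, while \eqref{eq:gass} combined with the indicator $\1_{(\ell(y),\infty)}$ forces $\nu\equiv 0$. On $\{\ell(y)<T\}$, \eqref{eq:Pkp1j-new}--\eqref{eq:distTAXi} tell us that $P$ is Poisson with parameter $(T-\ell(y))\widehat\lambda(y)$ and that, conditionally on $P$, the triples $(T_m,A_m,X_m)_{m=1}^{P}$ are i.i.d.\ with common law
\begin{equation*}
\frac{1}{(T-\ell(y))\widehat\lambda(y)}\,\1_{(\ell(y),T]\times(0,\widehat\lambda(y)]}(t,a)\,\gamma(t,y,dx)\,dt\,da.
\end{equation*}
The standard representation of a Poisson point process as a Poisson-distributed collection of i.i.d.\ atoms (see e.g.\ \cite{Kal2002}) then identifies $\widetilde N:=\sum_{m=1}^{P}\delta_{(T_m,A_m,X_m)}$ as a Poisson random measure on $(\ell(y),T]\times(0,\widehat\lambda(y)]\times\cX$ with intensity $\1_{(\ell(y),T]\times(0,\widehat\lambda(y)]}(t,a)\,\gamma(t,y,dx)\,dt\,da$. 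The measure $N$ of \eqref{eq:Ngen} is the image of $\widetilde N$ under the acceptance thinning $A_m\leq\lambda(T_m,y)$, which is valid because $\lambda(\cdot,y)\leq\widehat\lambda(y)$ on $[\ell(y),T]$, followed by projection onto the $(t,x)$-coordinates. The thinning and mapping theorems for Poisson random measures then yield that $N$ is Poisson with intensity
\begin{equation*}
\Bigl(\int_0^{\widehat\lambda(y)}\1_{\{a\leq\lambda(t,y)\}}\,da\Bigr)\,\gamma(t,y,dx)\,\1_{(\ell(y),T]}(t)\,dt = g(t,y,dx)\,\1_{(\ell(y),T]}(t)\,dt,
\end{equation*}
and \eqref{eq:gass} lets me replace $\1_{(\ell(y),T]}$ by $\1_{(\ell(y),\infty)}$, recovering \eqref{eq:nu-int}.

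The only point requiring real care will be lifting these pathwise statements to the $\cG$-conditional statements demanded by the definition, namely the conditional Poisson distribution of $N(C)$ and the conditional independence of $N(C_1),\ldots,N(C_n)$ for disjoint $C_i$. This is handled by the regular conditional probability: on $\{Y=y\}$ the law $\bP(\,\cdot\mid\cG)$ agrees with the pathwise construction with $Y$ replaced by $y$, and because the intensity \eqref{eq:nu-int} depends measurably on $y$, both properties transfer directly upon integrating out. Everything else is a routine application of the Poisson thinning and mapping theorems, so I do not anticipate further obstacles.
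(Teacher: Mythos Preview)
Your proof is correct and takes a genuinely different route from the paper's. The paper proceeds by direct computation: it first shows that $N((s,t]\times B)$ is $\cG$-conditionally Poisson by computing its conditional characteristic function explicitly (using that $P$ is conditionally Poisson and that the indicators $\1_{\{s<T_m\leq t,\,X_m\in B,\,A_m\leq\lambda(T_m,Y)\}}$ are conditionally i.i.d.\ Bernoulli), and then establishes conditional independence of $N((s_1,t_1]\times B_1),\ldots,N((s_n,t_n]\times B_n)$ by a separate combinatorial argument involving the multinomial distribution, extending to general disjoint sets by monotone class. Your approach is more structural: after freezing $Y=y$ you recognize $\widetilde N=\sum_{m=1}^P\delta_{(T_m,A_m,X_m)}$ as a Poisson random measure via the ``Poisson number of i.i.d.\ atoms'' representation, and then invoke the thinning and mapping theorems to conclude that $N$ is itself Poisson with the claimed intensity, so that both defining properties fall out at once. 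Your route is shorter and more conceptual; the paper's is self-contained and avoids black-box appeals to the Poisson toolbox.

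One small point of precision: the justification you give for the freezing step---existence of a regular conditional probability ``because $(\cY,\mathbfcal Y)$ is Borel''---is not quite the right hook, since that bears on regular conditional distributions of $Y$, not on $\bP(\cdot\mid\cG)$ as a measure on $\Omega$. What you actually need (and what works without any standardness assumption on $\Omega$) is the substitution lemma for conditional expectations: if $Y$ is $\cG$-measurable and $(Z,(U_m,V_m,W_m)_m)$ is independent of $\cG$, then $\bE[h(Y,Z,(U_m,V_m,W_m)_m)\mid\cG]=\int h(Y,\cdot)\,d\bP_{(Z,(U_m,V_m,W_m)_m)}$ for bounded measurable $h$. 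This is exactly what lets you replace $Y$ by a deterministic $y$ and carry out the pathwise Poisson argument.
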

	\begin{proof}
		To prove the result we consider $N((s,t] \times B)$ for {fixed } $0 \leq s \leq t  $, $B \in \mathbfcal{X}$. We have
		\begin{align*}
		N((s,t] \times B) &=
		\sum_{m =1}^{\infty }
		\delta_{(T_m, X_m)} ((s,t] \times B)
		\1_\set{ \ell(Y) < T, i \leq P , A_m \leq \lambda(T_m, Y) }
		\\
		&=
		\sum_{m =1}^{P } \1_\set{\ell(Y) < T, s < T_m \leq t, X_m \in B , A_m \leq \lambda(T_m, Y) }.
		\end{align*}
		First we will prove that, conditionally on $\cG$, the random variable $N((s,t] \times B)$ has the Poisson distribution with mean $\nu((s,t] \times B)$.
		Towards this end we observe that $P $ has, conditionally on $\cG$, the Poisson distribution with mean $(T - \ell(Y))\wh{\lambda}(Y) \1_\set{\ell(Y) < T}$ (see \eqref{eq:Pkp1j-new}), so
		\[
		\bP( P =k | \cG )
		= e^{-(T - \ell(Y))\wh{\lambda}(Y)\1_\set{\ell(Y) < T}} \frac{\Big((T - \ell(Y))\wh{\lambda}(Y)\1_\set{\ell(Y) < T}\Big)^k}{k!}, \quad k=0,1,\ldots\	.
		\]
		Moreover,  we conclude from \eqref{eq:distTAXi} that for $m=1,2,\ldots,$
		\begin{align}
		\nonumber
		&\bP(
		\ell(Y) < T ,
		s < T_m \leq t, X_m \in B , A_m \leq \lambda(T_m, Y)
		| \cG)
		\\ \nonumber
		\quad
		&
		= \1_\set{\ell(Y) < T}\int_s^t
		\bigg( \!\!\! \int_B \bigg( \int_0^{\lambda(u, Y )}
		\!\!\!\!\!\!\!\!\!\!\!\!
		\frac{1}{(T - \ell(Y))\wh{\lambda}(Y) }\1_{(\ell(Y),T]\times [0,\wh{\lambda}(Y)]}(u,a)  da \bigg)\gamma(u,Y,dx) \bigg)du
		\\ \nonumber
		&= \1_\set{\ell(Y) < T} \frac{1}{(T - \ell(Y))\wh{\lambda}(Y) } \int_s^t
		\1_{(\ell(Y),T]}(u)   \lambda(u, Y   )\gamma(u,Y,B) \, du
%		\\\nonumber
%		&=
%		\1_\set{\ell(Y) < T}
%		\frac{1}{(T - \ell(Y))\wh{\lambda}(Y) }
%		\int_s^t
%		\1_{(\ell(Y),T]}(u)   g(u,Y,B) \, du
		\\ \label{eq:pstwo}
		&=
		{\1_\set{\ell(Y) < T}
		\frac{1}{(T - \ell(Y))\wh{\lambda}(Y) }
		\int_s^t
		\1_{(\ell(Y), {\infty )}}(u)   g(u,Y,B) \, du}=:p(Y),
		\end{align}
				where the last equality follows from \eqref{eq:gass}.

		Note that for $u\in \bR$ and $m=1,2,\ldots,$ we have\, \footnote{In the ensuing two formulae $i=\sqrt -1$.}
\[\bE (e^{iu\1_{\{\ell(Y) < T ,
		s < T_m \leq t, X_m \in B , A_m \leq \lambda(T_m, Y)
		\}}}| \cG)=(1-p(Y))+p(Y)e^{iu}. \]
This and the $\cG$-conditional independence of  $P $ and $(T_m,A_m, X_m)_{m=1}^\infty$ imply that
		\begin{align*}
		\bE( e^{ iu
			N((s,t] \times B)}
		| \cG)
		&={
e^{(e^{iu} -1) p(Y)(T - \ell(Y))\wh{\lambda}(Y)\1_\set{\ell(Y) < T}  }}	=
		e^{(e^{iu} -1) \int_s^t
			\1_{(\ell(Y), {\infty )}}(v)   g(v,Y,B) dv}	
		\\ &=
		e^{(e^{iu} -1) \nu((s,t] \times B)}	.
		%	\\
		%	& ddd
		\end{align*}
		Thus, the random variable $N((s,t] \times B)$ has the $\cG$-conditional Poisson distribution with mean equal to $ \nu((s,t] \times B)$.

Using standard monotone class  arguments we obtain
		that for arbitrary $C \in \cB(\bR_+) \otimes \mathbfcal{X} $ random variable   $N(C)$ has, conditionally on $\cG$, the Poisson distribution with mean $ \nu(C)$.

		Next, we will show that  for $0 \leq s_1 < t_1
		\leq s_2 < t_2 \leq \ldots  \leq s_n < t_n  $ and for sets $B_1, \ldots, B_n \in \mathbfcal{X}$ the random variables
		\begin{equation}\label{eq:N1Nn}
		N((s_1,t_1] \times B_1)
		,
		\ldots
		,
		N((s_n,t_n] \times B_n)
		\end{equation}
		are conditionally independent given $\cG$. 	Towards this end let us define
		\[
		S_r((s,t] \times B) :=\sum_{m=1}^r I_m((s,t] \times  B),
		\]
		for $r \in \bN$, $0 \leq s
		< t $, $B \in \mathbfcal{X}$,
		where
		\[
		I_m((s,t] \times  B) : =  \1_\set{s < T_m \leq t, \, X_m \in B , \, A_m \leq \lambda(T_m, Y ) }.
		\]
		Note  that the random variable $N((s,t] \times B)$ can be represented as \[ N((s,t] \times B) =   S_{P}((s,t] \times B). \]
		Using this representation we obtain  that
		\begin{align*}
		&J:=\bP( N((s_1,t_1] \times B_1) = l_1
		,
		\ldots
		,
		N((s_n,t_n] \times B_n) = l_n | \cG)
		\\
		&
		=\sum_{r=0}^\infty
		\bP\bigg(
		\bigcap_{j=1}^n S_r((s_j,t_j] \times B_j) =l_j
		,P = r
		\, \Big| \cG
		\bigg)
		\\
		&
=\sum_{r=l}^\infty
\bP\bigg(
\bigcap_{j=1}^n S_r((s_j,t_j] \times B_j) =l_j,
S_r\Big(\bR_+  \times \mathbfcal{X} \setminus \bigcup_{j=1}^n (s_j,t_j] \times B_j \Big) =r - l
\,\Big| \cG
\bigg) 		\bP(
P = r
| \cG
)  ,
		\end{align*}
		where $l = \sum_{j=1}^n l_j$. Now, from \eqref{eq:pstwo},	we see that the random vector
		\[
		\bigg(
		S_r((s_1,t_1] \times B_1)
		,
		\ldots
		,
		S_r((s_n,t_n] \times B_n) , S_r\Big(\bR_+ \times \mathbfcal{X} \setminus \bigcup_{j=1}^n (s_j,t_j] \times B_j \Big) \bigg)
		\]
		has, conditionally on $\cG$, the multinomial distribution with parameters $p_1, \ldots , p_{n+1}$ given by:
		\begin{align}
		\label{eq:pl}
		p_j &= p_j(Y) :=\bP(\ell(Y) < T , s_j < T_1 \leq t_j, X_1 \in B_j , A_1 \leq \lambda(T_1,  Y ) | \cG),
		\end{align}
		for $ j=1, \ldots,n $,
		and
		\[ p_{n+1} = 1- p_1 - \ldots -p_{n}.
		\]
		Hence, using the fact that $l = \sum_{j=1}^n l_j$,
		we deduce that
		\allowdisplaybreaks
		\begin{align*}
		J&
		=\sum_{r=l}^\infty
		\frac{r!}{l_1! \ldots l_n! (r-l)!}
		p_1^{l_{1}}
		\cdots
		p_{n}^{l_n}
		p_{n+1}^{r - l}
		\bP(
		P = r
		| \cG
		)
		\\
		&
		=
		\frac{1}{l_1! \ldots l_n!}
		p_1^{l_{1}}
		\cdots
		p_{n}^{l_n}
		\sum_{r=0}^\infty
		\frac{(r+l)!}{r!}
		p_{n+1}^{r}
		\bP(
		P = r+l
		| \cG
		)
		\\
		&=
		\frac{1}{l_1! \ldots l_n!}
		p_1^{l_{1}}
		\cdots
		p_{n}^{l_n}
		\sum_{r=0}^\infty
		\frac{(r+l)!}{r!}
		p_{n+1}^{r}
		e^{-(T - \ell(Y))\wh{\lambda}(Y)\1_\set{ \ell(Y) < T  }} \frac{\big((T - \ell(Y))\wh{\lambda}(Y) \1_\set{ \ell(Y) < T  } \big)^{r+l}}{(r+l)!}	
%		\\
%		&=
%		\frac{1}{l_1! \ldots l_n!}
%		p_1^{l_{1}}
%		\cdots
%		p_{n}^{l_n}
%		{((T - \ell(Y))\wh{\lambda}(Y)\1_\set{ \ell(Y) < T })^{l} }	
%		e^{-(T - \ell(Y))\wh{\lambda}(Y) \1_\set{ \ell(Y) < T  }} \sum_{r=0}^\infty
%		\frac{1}{r!}
%		p_{n+1}^{r}
%		{((T - \ell(Y))\wh{\lambda}(Y))^{r}}	
		\\
		&=\Bigg[
		\prod_{j=1}^n
		\frac{\Big(p_j (T - \ell(Y))\wh{\lambda}(Y) \1_\set{ \ell(Y) < T }\Big)^{l_j} }{l_j!}
		\Bigg]
		e^{(p_{n+1}-1)(T - \ell(Y))\wh{\lambda}(Y) \1_\set{ \ell(Y) < T  }}
		\\
		&=
		\prod_{j=1}^n
		\frac{\Big(p_j (T - \ell(Y))\wh{\lambda}(Y)\1_\set{ \ell(Y) < T }\Big)^{l_j} }{l_j!}
		e^{-p_j(T - \ell(Y))\wh{\lambda}(Y) \1_\set{ \ell(Y) < T }}
		\\
		&=\prod_{j=1}^n \bP(
		N((s_j,t_j] \times B_j) = l_j | \cG),
		\end{align*}
		where the last equality follows from the fact that
		$N((s_j,t_j] \times B_j)$ has the $\cG$-conditional Poisson distribution with mean equal to $ \nu((s_j,t_j] \times B_j) = p_j (T - \ell(Y))\wh{\lambda}(Y)\1_\set{ \ell(Y) < T }$, which is a consequence of  \eqref{eq:nu-int}, \eqref{eq:pstwo} and  \eqref{eq:pl}.
		Using standard use the monotone class arguments  we conclude  from \eqref{eq:N1Nn}
		that for arbitrary disjoint sets  $C_1, \ldots, C_n \in \cB(R_+) \otimes \mathbfcal{X} $ that random variables   $N(C_1), \ldots, N(C_n)$  are $\cG$-conditionally independent.
			The proof is now complete.
	\end{proof}

\subsection{Relation between conditional Poisson random measures  and doubly stochastic marked Poisson processes}

	We begin by recalling (cf. Chapter 6 in \cite{LasBra1995}) the concept of a doubly stochastic marked Poisson process. For this, we consider a filtration $\bF$ on $(\Omega,\cF,\bP)$.
	A marked point process $N$ on {$(\bR_+ \times \mathcal{X}, \cB(\bR_+) \otimes \mathbfcal{X})$} is an $\bF$-doubly stochastic marked Poisson process if
	there exist an $\mathcal{F}_0$-measurable random measure $\nu$ on 	{on $(\bR_+ \times \mathcal{X}, \cB(\bR_+) \otimes \mathbfcal{X})$} such that
	\begin{equation}\label{eq:DSMPP}
	\bP( N((s,t] \times B ) = k | \cF_s ) = e^{\nu((s,t] \times B)} \frac{(\nu((s,t] \times B))^k}{k!},
	\qquad
	0 \leq s < t, \
	B \in \cX.
	\end{equation}
Thus, for  $0 \leq s < t$, $B \in \mathbfcal{X}$ we have
	\begin{equation}\label{eq:F0intker}
	\nu((s,t] \times B) = \bE( N((s,t] \times B) | \cF_0).
	\end{equation}
	Hence, by analogy with the concept of the intensity of a Poisson random measure, the measure $\nu$ is called the $\cF_0$-intensity kernel of $N$ (see Chapter 6 in \cite{LasBra1995}).

Let now $\tilde N$  be marked point process on $(\bR_+ \times \mathcal{X}, \cB(\bR_+) \otimes \mathbfcal{X})$, such that its $\bF$-compensator $\tilde \nu$ is the $\cF_0$-intensity kernel in a sense that the property analogous to \eqref{eq:F0intker} holds,
  \[\tilde \nu((s,t] \times B) = \bE(\tilde  N((s,t] \times B) | \cF_0),\ 0 \leq s < t,\ B \in \mathbfcal{X}. \]
 Then, one can show (see Theorem 6.1.4 in \cite{LasBra1995}) that  $\tilde N$ is an $\bF$-doubly stochastic marked Poisson process,  i.e. the analog of \eqref{eq:DSMPP} holds with $\tilde N$ and $\tilde \nu$. The opposite statement is true as well (see Theorem 6.1.4 in \cite{LasBra1995}):  if  $\tilde N$ is an $\bF$-doubly stochastic marked Poisson process, then the $\bF$-compensator $\tilde \nu$ of $\tilde N$   is  an $\cF_0$-intensity kernel of $\tilde N$.

{Conditional Poisson random measures on  $(\bR_+ \times \mathcal{X}, \cB(\bR_+) \otimes \mathbfcal{X})$ are closely related to $\bF$-doubly stochastic marked Poisson processes. {It can be} shown  that if $N$ is an $\bF$-doubly stochastic marked Poisson process with intensity kernel $\nu$, then $N$ considered as a random measure is an  $\cF_0$-conditionally Poisson random measure with intensity kernel $\nu$.

This implies that for sets
		$B_1, \ldots, B_n \in \mathbfcal{X}$ and for $0 \leq s_1 < t_1 \leq s_2 < t_2 \leq \ldots \leq s_n < t_n \leq t$, $n \in \bN $, we have
\begin{align}\label{eq:poissonrm}
&
\bP
\Big(
\bigcap_{i=1}^n
\set{
	N((s_i, t_i] \times B_i )=l_i } | \cF_0 \Big) =
\prod_{i=1}^{n}
e^{\nu((s_i,t_i] \times B_i)} \frac{(\nu((s_i,t_i] \times B_i))^{l_i}}{l_i!}
\\ \nonumber
&
=
\prod_{i=1}^n
\bP
\big(\set{
	N((s_i, t_i] \times B_i )=l_i } | \cF_0 \big) .	
\end{align}

The next result, in a sense, complements our discussion of conditional Poisson random measures and doubly stochastic marked Poisson processes.
	\begin{proposition}\label{prop:fdsmpp2}
		i) 	Let $M$ be a marked point process {on $(\bR_+ \times \mathcal{X}, \cB(\bR_+) \otimes \mathbfcal{X})$}, which is a $\cG$-conditional Poisson random measure with intensity measure $\nu$, and let $\wh{\bF}^M$ be a filtration defined by family of $\sigma$-fields
		\[
		\wh{\cF}^M_t = \cG \vee \cF^{M}_t, \qquad t \geq 0.
		\]
		Then $M$ is an  $\wh{\bF}^M$-doubly stochastic marked Poisson process with $\wh{\cF}^M_0$-intensity kernel $\nu$ {being also $\wh{\bF}^M$-compensator of $M$}.
		
		\noindent ii)
		Let $\cN = (N_j)_{j \geq 1}$ be a family of  marked point processes {on $(\bR_+ \times \mathcal{X}, \cB(\bR_+) \otimes \mathbfcal{X})$}, which are $\cG$-conditional Poisson random measures (each $N_j$ with intensity measure $\nu_j$), and let $\wh{\bF}^\cN$ be a filtration defined by the family of $\sigma$-fields
		\[
		\wh{\cF}^\cN_t = \cG \vee \bigvee_{k \geq 1} \cF^{N_k}_t, \qquad t \geq 0.
		\]
		Suppose that $(N_j)_{j \geq 1}$ are $\cG$-conditionally independent. Then each $N_j$ is an  $\wh{\bF}^\cN$-doubly stochastic marked Poisson process with $\wh{\cF}^\cN_0$-intensity kernel $\nu_j$ { being also $\wh{\bF}^\cN$-compensator of $N_j$}.
	\end{proposition}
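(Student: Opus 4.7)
The plan is to lift the $\cG$-conditional Poisson structure to the enriched filtration by showing that the increment $M((s,t]\times B)$ is $\cG$-conditionally independent of $\cF^M_s$, so that conditioning on $\wh{\cF}^M_s=\cG\vee\cF^M_s$ collapses to conditioning on $\cG$. Once this is established, the Poisson formula in the definition of a doubly stochastic marked Poisson process follows directly, and the identification of the intensity kernel with the compensator comes from Theorem 6.1.4 in \cite{LasBra1995}.

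For assertion i), I would first observe that $\cF^M_s$ is generated by the $\pi$-system of finite intersections of cylinders $\{M((u,v]\times B')=\ell\}$ with $0\leq u<v\leq s$ and $B'\in\mathbfcal{X}$. Each time-mark product set $(u,v]\times B'$ is disjoint from $(s,t]\times B$, so the $\cG$-conditional independence of disjoint increments of $M$ (property 2 in the definition of a $\cG$-conditional Poisson random measure) gives $\cG$-conditional independence of $M((s,t]\times B)$ from every such generator. A Dynkin $\pi$-$\lambda$ argument then extends this to $\cG$-conditional independence of $M((s,t]\times B)$ from the whole of $\cF^M_s$. Consequently,
\[
\bP\bigl(M((s,t]\times B)=k\mid\wh{\cF}^M_s\bigr)=\bP\bigl(M((s,t]\times B)=k\mid\cG\bigr)=e^{-\nu((s,t]\times B)}\frac{(\nu((s,t]\times B))^k}{k!},
\]
and the right-hand side is $\cG$-measurable, hence $\wh{\cF}^M_0$-measurable. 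This is precisely the defining property of an $\wh{\bF}^M$-doubly stochastic marked Poisson process with $\wh{\cF}^M_0$-intensity kernel $\nu$, and Theorem 6.1.4 in \cite{LasBra1995} then identifies $\nu$ with the $\wh{\bF}^M$-compensator of $M$.

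For assertion ii) the same template applies, but now the past sigma-field $\bigvee_{k\geq 1}\cF^{N_k}_s$ mixes information coming from $N_j$ itself and from the other $N_k$, $k\neq j$. The additional ingredient needed is that $N_j((s,t]\times B)$ is $\cG$-conditionally independent of $\cF^{N_k}_\infty$ for every $k\neq j$, which is provided directly by the hypothesis of $\cG$-conditional independence of the family $(N_k)_{k\geq 1}$; independence from $\cF^{N_j}_s$ is obtained exactly as in assertion i). Combining these two inputs through a second Dynkin-class argument on the $\pi$-system of finite intersections of cylinders drawn from the individual $\cF^{N_k}_s$, $k\geq 1$, yields $\cG$-conditional independence of $N_j((s,t]\times B)$ from $\bigvee_{k\geq 1}\cF^{N_k}_s$; the remainder of the argument is then identical to assertion i).

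The main obstacle I anticipate is the careful bookkeeping in the monotone-class arguments, particularly in assertion ii), where the $\pi$-system generating $\bigvee_{k\geq 1}\cF^{N_k}_s$ must be chosen so that both independence inputs, internal to $N_j$ and across distinct $N_k$, can be simultaneously invoked; after that, verification of the Poisson formula and the invocation of Theorem 6.1.4 in \cite{LasBra1995} are routine.
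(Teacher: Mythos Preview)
Your proposal is correct and follows essentially the same route as the paper: a $\pi$-system/monotone-class argument on finite intersections of cylinder sets, using disjointness of time intervals (and, in part ii), the assumed $\cG$-conditional independence across $j$) to reduce $\bP(N_j((s,t]\times D)=k\mid\wh{\cF}^\cN_s)$ to $\bP(N_j((s,t]\times D)=k\mid\cG)$, followed by the appeal to Theorem~6.1.4 in \cite{LasBra1995}. The only cosmetic difference is that the paper isolates the step ``$\cG$-conditional independence of $B$ and $C$ implies $\bE(\1_B\1_{A\cap C})=\bE(\bP(B\mid\cG)\1_{A\cap C})$ for $A\in\cG$'' as a separate elementary lemma (Lemma~\ref{lem:cond-indep}) and builds the $\pi$-system as sets $A\cap C$ with $A\in\cG$, whereas you phrase the same step as the standard fact that $\cG$-conditional independence from $\cH$ forces $\bE(\cdot\mid\cG\vee\cH)=\bE(\cdot\mid\cG)$.
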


In the proof of Proposition \ref{prop:fdsmpp2} we will use the following elementary result, whose derivation is omitted:
	\begin{lemma}\label{lem:cond-indep}
		Let $\cG$ be a sigma field and let $A \in \cG$. Then for arbitrary measurable sets $B$ and $C$ which are conditionally independent given $\cG$ we have
		\[
		\bE( \1_B \1_{A \cap C}) = \bE( \bP (B|\cG) \1_{A \cap C}).
		\]
	\end{lemma}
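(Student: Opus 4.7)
The plan is to rewrite the left-hand side as a single triple-indicator expectation, then apply the tower property together with the conditional independence hypothesis to factor, and finally exploit $\cG$-measurability to regroup. The whole argument is a short chain of equalities. I do not anticipate any real obstacle; the only care needed is to keep track of which factors are $\cG$-measurable at each step so that they can be moved freely in and out of $\bE(\cdot\mid\cG)$.

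First, I would note that since $A \in \cG$, the indicator $\1_A$ is $\cG$-measurable, and $\1_B \1_{A\cap C} = \1_A \1_B \1_C$. Applying the tower property, and pulling the $\cG$-measurable factor $\1_A$ out of the inner conditional expectation, yields
\[
\bE(\1_B \1_{A\cap C}) = \bE\bigl(\bE(\1_A \1_B \1_C \mid \cG)\bigr) = \bE\bigl(\1_A\, \bE(\1_B \1_C \mid \cG)\bigr).
\]

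Next, I would invoke the hypothesis that $B$ and $C$ are conditionally independent given $\cG$, i.e., $\bE(\1_B\1_C\mid \cG) = \bP(B\mid\cG)\,\bP(C\mid\cG)$, to obtain
\[
\bE(\1_B \1_{A\cap C}) = \bE\bigl(\1_A\, \bP(B\mid\cG)\,\bP(C\mid\cG)\bigr).
\]
Finally, since both $\1_A$ and $\bP(B\mid\cG)$ are $\cG$-measurable, I would repackage the product by another application of the tower property in the opposite direction, writing
\[
\bE\bigl(\1_A \bP(B\mid\cG)\bP(C\mid\cG)\bigr) = \bE\bigl(\bP(B\mid\cG)\,\bE(\1_A\1_C\mid\cG)\bigr) = \bE\bigl(\bP(B\mid\cG)\,\1_{A\cap C}\bigr),
\]
which is exactly the identity to be proved.
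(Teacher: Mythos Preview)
Your proof is correct. The paper does not actually supply a proof of this lemma --- it introduces it as an ``elementary result, whose derivation is omitted'' --- so there is nothing to compare against; your tower-property and conditional-independence computation is exactly the standard argument one would expect.
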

	\begin{proof}(of Proposition \ref{prop:fdsmpp2})
		We will prove ii), the proof of i) is similar in spirit to the proof of ii) and in  fact a bit simpler.	
		Fix arbitrary $j \geq 1$. By assumption $N_j$ is a $\cG$-conditional Poisson random measure, so we have {for fixed $0 \leq s < t$ and $D \in \mathbfcal{X}$ }
		\begin{equation}\label{eq:NjgivenG}
				\bP( N_j ((s,t] \times D) = i | \cG) =
				e^{-\nu_j((s,t] \times D)} \frac{(\nu_j((s,t] \times D))^i}{i!},
		\end{equation}		
		where $\nu_j$ is $\cG = \wh{\cF}^{\cN}_0$-measurable random measure.
		In view of the definition of  $\wh{\bF}^\cN$-doubly stochastic marked Poisson process, of the above formula {and of  Proposition 6.1.4 in \cite{LasBra1995}} it suffices to show that for arbitrary set $F \in \wh{\cF}^\cN_s $ it holds
		\begin{equation}\label{eq:DSMPP1}
		\bE( \1_\set{N_j ((s,t] \times D)=k} \1_{F}) =
		\bE( \bP( N_j ((s,t] \times D) = k | \cG )\1_{F}) .
		\end{equation}
		Indeed \eqref{eq:DSMPP1} and \eqref{eq:NjgivenG} imply
		\[
						\bP( N_j ((s,t] \times D) = i | \wh{\cF}^\cN_s) =
		e^{-\nu_j((s,t] \times D)} \frac{(\nu_j((s,t] \times D))^i}{i!}.
		\]
		for $\wh{\cF}^{\cN}_0$-measurable random measure $\nu_j$.
		So that $N_j$ is a $\wh{\bF}^\cN$-doubly stochastic marked Poisson process  with $\wh{\cF}^{\cN}_0$-intensity kernel $\nu_j$. Then Proposition 6.1.4 in \cite{LasBra1995}  implies that $\nu_j$ is $\wh{\bF}^\cN$-compensator of $N^j$.

		To prove 	\eqref{eq:DSMPP1} we will use the Monotone Class Theorem.
		First note that sets $F$ for which \eqref{eq:DSMPP1} holds constitute  $\lambda$-system.
		Thus it suffices to show the above  equality for a $\pi$-system of sets which generates $\wh{\cF}^\cN_s$. Towards this end
		consider family of sets:	
		\begin{align*}
		{\cA}_s := \Big\{ A \cap C : \, & A \in \cG, C = \cap_{r=1}^n \cap_{l=1}^{p_r} \set{ N_{m_r}( (s_l^r , t_l^r ] \times D_l^r ) = k_l^r},  \\
		& 0  \leq s_{1}^r  < t_1^r \leq \ldots \leq s_{p_r}^r < t_{p_r}^r \leq s ,
		\ D^{r}_1, \ldots D^r_{p_r } \in \mathbfcal{X}, \
		k^r_1, \ldots , k^r_{p_r} \in \bN, \\
		& 0 \leq p_1 \leq \ldots \leq  p_r, \ 0 \leq m_1 \leq \ldots \leq  m_r, \quad
		r=1, \dots,n , \, n \in \bN \Big\}.
		\end{align*}
		Clearly, ${\cA}_s$ is a $\pi$-system and $\sigma( {\cA}_s) = \wh{\cF}^\cN_s$.
		Let us take $F \in {\cA}_s$, so $F = A \cap C $,
		and let $(s,t] \times D$ be disjoint with sets $(s_l^r , t_l^r ] \times D_l^r$ which define $C$.  This and $\cG$-conditional independence of  $\set{N_j}_{j\geq 1}$ imply that  events  $\set{N_j ((s,t] \times D)=k}$ and $C$ are conditionally independent given $\cG$. Hence,  by applying Lemma \ref{lem:cond-indep}, we obtain that \eqref{eq:DSMPP1} holds for $F \in {\cA}_s$.  Then, invoking the Monotone Class Theorem, we conclude that \eqref{eq:DSMPP1} holds for sets  $ F \in \wh{\cF}^\cN_s$. The proof is complete.
	\end{proof}

\subsection{Additional Technical Result}

	\begin{lemma}\label{lem:NCT}
		Let $(\mu^{k})_{ k=1}^\infty$ be a sequence of measures. Let $\mu$ be a mapping $\mu: \mathbfcal{X} \rightarrow [0,\infty]$ defined by
		\[
			\mu(A) = \lim_{n \rightarrow \infty} \sum_{k=1}^n \mu_k( A).
		\]
		Then $\mu$ is a measure.	Moreover for any measurable non negative  function $F: \cX \rightarrow \bR_+ $ we have
		\[
			\int_{\mathcal{X}} F d \mu
			=
			\lim_{n \rightarrow \infty}
			\sum_{k=1}^n
			\int_{\mathcal{X}} F d \mu_k
		\]
	\end{lemma}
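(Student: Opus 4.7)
The plan is to prove the two assertions separately, and reduce the second one to the monotone convergence theorem for the measure $\mu$ itself.

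First I would establish that $\mu$ is well-defined and a measure. Because each $\mu_k$ is nonnegative, the partial sums $S_n(A) := \sum_{k=1}^n \mu_k(A)$ are nondecreasing in $n$, so $\mu(A) = \lim_n S_n(A)$ exists in $[0,\infty]$. Clearly $\mu(\varnothing) = 0$. For countable additivity, take pairwise disjoint $(A_i)_{i \geq 1} \subset \mathbfcal{X}$; apply countable additivity of each $\mu_k$, and then interchange the two nonnegative sums (Tonelli for double series of nonnegative terms) to get
\[
\mu\Big(\bigcup_i A_i\Big) \;=\; \lim_n \sum_{k=1}^n \sum_i \mu_k(A_i) \;=\; \sum_i \lim_n \sum_{k=1}^n \mu_k(A_i) \;=\; \sum_i \mu(A_i).
\]

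For the integral identity, I would introduce the finite-sum measures $\widetilde \mu_n := \sum_{k=1}^n \mu_k$. Each $\widetilde\mu_n$ is a measure (finite sum of measures), and linearity of the integral with respect to a finite sum of measures gives $\int F\, d\widetilde\mu_n = \sum_{k=1}^n \int F\, d\mu_k$. Hence it suffices to prove $\lim_n \int F\, d\widetilde\mu_n = \int F\, d\mu$. Note $\widetilde\mu_n \uparrow \mu$ setwise by the definition of $\mu$, and in particular $\widetilde\mu_n \leq \mu$ for every $n$.

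Next I would verify the limit identity by the usual approximation chain. For an indicator $F = \1_A$ it is immediate: $\int \1_A\, d\widetilde\mu_n = \widetilde\mu_n(A) \uparrow \mu(A) = \int \1_A\, d\mu$. By linearity this extends to every nonnegative simple function. For a general nonnegative measurable $F$, choose simple functions $s_m \uparrow F$. The inequality $\widetilde\mu_n \leq \mu$ gives $\int F\, d\widetilde\mu_n \leq \int F\, d\mu$, so $\lim_n \int F\, d\widetilde\mu_n \leq \int F\, d\mu$. Conversely, for each fixed $m$, $\int s_m\, d\widetilde\mu_n \leq \int F\, d\widetilde\mu_n$; letting $n \to \infty$ yields $\int s_m\, d\mu \leq \lim_n \int F\, d\widetilde\mu_n$, and then letting $m \to \infty$ via monotone convergence applied to the single measure $\mu$ yields $\int F\, d\mu \leq \lim_n \int F\, d\widetilde\mu_n$.

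The main obstacle, such as it is, is bookkeeping the two nonnegative interchanges (the double sum in countable additivity and the simple-function approximation coupled with $\widetilde\mu_n \leq \mu$); since every quantity in sight is nonnegative, no integrability or uniformity issue arises, and the argument is purely measure-theoretic without any deeper technical hurdle.
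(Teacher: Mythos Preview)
Your proof is correct and follows essentially the same route as the paper for the integral identity: verify it for simple functions and pass to general nonnegative $F$ via monotone convergence. The only notable difference is in the first assertion: the paper dispatches the claim that $\mu$ is a measure by citing the Nikodym convergence theorem, whereas you give the direct elementary argument (Tonelli for nonnegative double sums). Your argument is more self-contained and arguably more appropriate here, since the partial sums are monotone increasing and no subtlety of the Nikodym theorem is actually needed; the paper's citation is correct but heavier than the situation requires.
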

	\begin{proof}
		The first part follows from the Nikodym convergence theorem (see e.g. Theorem 7.48 in Swartz \cite{Swa2009}).

		To prove the second assertion it suffices to consider simple step functions only, i.e.
		functions $F$ of the form
		\[
			F(x) :=
			\sum_{i=1}^n a_i \1_{A_i}(x), \quad  a_i \in \bR_+, A_i \in  \mathbfcal{X}.
		\]
		For such $F$ it holds
		\[
			\int_{\mathcal{X}} F d \mu
			=
			\sum_{i=1}^n a_i
			\mu(A_i)
			=
			\sum_{i=1}^n a_i
			\sum_{ k=1}^\infty
			\mu_k(A_i)
			=
			\sum_{ k=1}^\infty
			\sum_{i=1}^n a_i
			\mu_k(A_i)
			=
			\sum_{k=1}^\infty
			\int_{\mathcal{X}} F d \mu_k.
		\]
		Using usual approximation technique and the monotone convergence theorem we finish the proof.
	\end{proof}

\bibliographystyle{plain}
\bibliography{Math_Fin}

\begin{thebibliography}{10}

\bibitem{BDHM}
E.~Bacry, S.~Delattre, C.~Hoffmann, and J.F. Muzy.
\newblock Some limit theorems for {H}awkes processes and application to
  financial statistics.
\newblock {\em Stochastic Processes and their Applications}, 123(7):2475--2499,
  2013.

\bibitem{BacDelHofMuz2013}
E.~Bacry, S.~Delattre, M.~Hoffmann, and J.~F. Muzy.
\newblock Modelling microstructure noise with mutually exciting point
  processes.
\newblock {\em Quant. Finance}, 13(1):65--77, 2013.

\bibitem{BacMuz2014}
E.~Bacry and J.F. Muzy.
\newblock Hawkes model for price and trades high-frequency dynamics.
\newblock {\em Quant. Finance}, 14(7):1147--1166, 2014.

\bibitem{BieJakJeaNie2018}
T.R. Bielecki, J.~Jakubowski, M.~Jeanblanc, and M.~Nieweglowski.
\newblock Semimartingales and shrinkage of filtration.
\newblock {\em Submitted}, 2019. https://arxiv.org/pdf/1803.03700.pdf.

\bibitem{BJN-buczek}
T.R. Bielecki, J.~Jakubowski, and M.~Nieweglowski.
\newblock {S}tructured {D}ependence between {S}tochastic {P}rocesses.
\newblock Cambridge University Press, forthcoming, 2020.

\bibitem{BreMas1996}
P.~Br\'emaud and L.~Massouli\'e.
\newblock Stability of nonlinear {H}awkes processes.
\newblock {\em Ann. Probab.}, 24(3):1563--1588, 1996.

\bibitem{Cin2011}
E.~\c{C}\i nlar.
\newblock {\em Probability and stochastics}, volume 261 of {\em Graduate Texts
  in Mathematics}.
\newblock Springer, New York, 2011.

\bibitem{duffilsch2003}
D.~Duffie, D.~Filipovi\'c, and W.~Schachermayer.
\newblock Affine processes and applications in finance.
\newblock {\em Ann. Appl. Probab.}, 13(3):984--1053, 2003.

\bibitem{ELL}
P.~Embrechts, T.~Liniger, and L.~Lin.
\newblock Multivariate {H}awkes processes: an application to financial data.
\newblock {\em J. Appl. Probab.}, 48A(New frontiers in applied probability: a
  Festschrift for S\o ren Asmussen):367--378, 2011.

\bibitem{ethkur1986}
S.N. Ethier and T.G. Kurtz.
\newblock {\em Markov processes: Characterization and convergence}.
\newblock Wiley Series in Probability and Mathematical Statistics: Probability
  and Mathematical Statistics. John Wiley \& Sons Inc., New York, 1986.

\bibitem{Hawkes1971a}
A.G. Hawkes.
\newblock Point {S}pectra of {S}ome {M}utually {E}xciting {P}oint {P}rocesses.
\newblock {\em Journal of the {R}oyal {S}tatistical {S}ociety. {S}eries {B}
  ({M}ethodological)}, 33(3):438--443, 1971.

\bibitem{Hawkes1971}
A.G. Hawkes.
\newblock Spectra of {S}ome {S}elf-{E}xciting and {M}utually {E}xciting {P}oint
  {P}rocesses.
\newblock {\em Biometrika}, 58(1):83--90, 1971.

\bibitem{Haw2017}
A.G. Hawkes.
\newblock Hawkes processes and their applications to finance: a review.
\newblock {\em Quantitative Finance}, 18(2):193--198, dec 2017.

\bibitem{HawOak1974}
A.G. Hawkes and D.~Oakes.
\newblock A cluster process representation of a self-exciting process.
\newblock {\em J. Appl. Probab.}, 11:493--503, 1974.

\bibitem{HeWan1984}
S.-W. He and J.-G. Wang.
\newblock Two results on jump processes.
\newblock In {\em S{\'e}minaire de Probabilit{\'e}s XVIII 1982/83}, pages
  256--267. Springer, 1984.

\bibitem{HeWanYan1992}
S.-W. He, J.-G. Wang, and Ji.-A. Yan.
\newblock {\em Semimartingale {T}heory and {S}tochastic {C}alculus}.
\newblock Kexue Chubanshe (Science Press), Beijing, 1992.

\bibitem{Jac1975}
J.~Jacod.
\newblock Multivariate point processes: predictable projection,
  {R}adon-{N}ikod\'ym derivatives, representation of martingales.
\newblock {\em Z. Wahrscheinlichkeitstheorie und Verw. Gebiete}, 31:235--253,
  1974/75.

\bibitem{js1987}
J.~Jacod and A.N. Shiryaev.
\newblock {\em Limit theorems for stochastic processes}, volume 288 of {\em
  Grundlehren der Mathematischen Wissenschaften [Fundamental Principles of
  Mathematical Sciences]}.
\newblock Springer-Verlag, Berlin, second edition, 2003.

\bibitem{JeaYorChe2009}
M.~Jeanblanc, M.~Yor, and M.~Chesney.
\newblock {\em Mathematical methods for financial markets}.
\newblock Springer Finance. Springer-Verlag London, Ltd., London, 2009.

\bibitem{Kal2002}
O.~Kallenberg.
\newblock {\em Foundations of modern probability}.
\newblock Probability and its Applications (New York). Springer-Verlag, New
  York, second edition, 2002.

\bibitem{KELLY2019100354}
J.D. Kelly, J.~Park, R.J. Harrigan, and et~al.
\newblock Real-time predictions of the 2018 -- 2019 {E}bola virus disease
  outbreak in the {D}emocratic {R}epublic of the {C}ongo using {H}awkes point
  process models.
\newblock {\em Epidemics}, 28(100354), 2019.

\bibitem{LasBra1995}
G.~Last and A.~Brandt.
\newblock {\em Marked point processes on the real line: The dynamic approach}.
\newblock Probability and its Applications (New York). Springer-Verlag, New
  York, 1995.

\bibitem{Lau2015}
P.J. Laub, T.~Taimre, and P.K. Pollett.
\newblock Hawkes processes.
\newblock https://arxiv.org/abs/1507.02822.

\bibitem{lin2009}
T.J. Liniger.
\newblock {\em Multivariate hawkes processes}.
\newblock PhD thesis, ETH Zurich, 2009.

\bibitem{Oak1975}
D.~Oakes.
\newblock The {M}arkovian self-exciting process.
\newblock {\em J. Appl. Probability}, 12:69--77, 1975.

\bibitem{Ogata1998}
Y.~Ogata.
\newblock Space-time {P}oint-process {M}odels for {E}arthquake {O}ccurrences.
\newblock {\em Ann. {I}nst. {M}ath. {S}tatist.}, 50:379--402, 1998.

\bibitem{Ogata1999}
Y.~Ogata.
\newblock Seismicity {A}nalysis through {P}oint-process {M}odeling: {A}
  {R}eview.
\newblock {\em Pure appl. geophys.}, 155:471--507, 1999.

\bibitem{RMKCX2018}
M.A. Rizoiu, S.~Mishra, Q.~Kong, M.~Carman, and L.~Xie.
\newblock {SIR}-{H}awkes: Linking epidemic models and {H}awkes processes to
  model diffusions in finite populations.
\newblock In {\em WWW '18: Proceedings of the 2018 World Wide Web Conference},
  pages 419--428, 2018.

\bibitem{Sato2013}
K.-i. Sato.
\newblock {\em L\'{e}vy processes and infinitely divisible distributions},
  volume~68 of {\em Cambridge Studies in Advanced Mathematics}.
\newblock Cambridge University Press, Cambridge, 2013.
\newblock Translated from the 1990 Japanese original, Revised edition of the
  1999 English translation.

\bibitem{SHH2019}
F.P. Schoenberg, M.~Hoffmann, and R.J. Harrigan.
\newblock A recursive point process model for infectious diseases.
\newblock {\em Annals of the Institute of Statistical Mathematics},
  71:1271--1287, 2019.

\bibitem{Swa2009}
C.~Swartz.
\newblock {\em Multiplier convergent series}.
\newblock World {S}cientific, 2009.

\bibitem{Anca2011}
A.~Vacarescu.
\newblock {\em Filtering and parameter estimation for partially observed
  generalized Hawkes processes}.
\newblock PhD thesis, Stanford University, 2011.

\bibitem{Zhu2013}
L.~Zhu.
\newblock {\em Nonlinear {H}awkes {P}rocesses}.
\newblock PhD thesis, New {Y}ork {U}niversity, May 2013.

\end{thebibliography}
\end{document}